\documentclass[a4paper]{article}
\usepackage{amsmath}
\usepackage{amsthm}
\usepackage{amssymb}
\usepackage{graphicx}
\usepackage{authblk}
\usepackage{hyperref}
\usepackage{multirow}
\usepackage{subfigure}
\usepackage{cite}
\usepackage{color} 
\usepackage[dvipsnames]{xcolor}
\usepackage{comment}
\usepackage{enumitem}
\newtheorem{theorem}{Theorem}[section]
\newtheorem{proposition}[theorem]{Proposition}
\newtheorem{assertion}[theorem]{Assertion}
\newtheorem{lemma}[theorem]{Lemma}

\newtheorem{remark}{Remark}

\usepackage{fullpage}
\newcommand{\eqlab}[1]{\label{eq:#1}}
\renewcommand{\eqref}[1]{(\ref{eq:#1})}

\newcommand{\Wert}{{\vert\kern-0.25ex\vert\kern-0.25ex\vert}}

\newcommand{\figref}[1]{Fig.~\ref{fig:#1}}
\newcommand{\figlab}[1]{\label{fig:#1}}
\newcommand{\propref}[1]{Proposition~\ref{proposition:#1}}
\newcommand{\proplab}[1]{\label{proposition:#1}}
\newcommand{\lemmaref}[1]{Lemma~\ref{lemma:#1}}
\newcommand{\lemmalab}[1]{\label{lemma:#1}}
\newcommand{\remref}[1]{Remark~\ref{remark:#1}}
\newcommand{\remlab}[1]{\label{remark:#1}}
\newcommand{\thmref}[1]{Theorem~\ref{theorem:#1}}
\newcommand{\thmlab}[1]{\label{theorem:#1}}
\newcommand{\assref}[1]{Assertion~\ref{ass:#1}}
\newcommand{\asslab}[1]{\label{ass:#1}}

\newcommand{\appref}[1]{Appendix~\ref{app:#1}}

\newcommand{\applab}[1]{\label{app:#1}}

\newcommand{\secref}[1]{section~\ref{sec:#1}}
\newcommand{\seclab}[1]{\label{sec:#1}}

\newcommand\rsp[1]{{\color{black}{#1}}}
\newcommand\rspp[1]{{\color{black}{#1}}}
\author{Kristian Uldall Kristiansen}

\affil{Department of Applied Mathematics and Computer Science, Technical University of Denmark, 2800 Kgs. Lyngby, Denmark}
\affil{{\tt krkri@dtu.dk}}

\title{Improved Gevrey-1 estimates of formal series expansions of center manifolds}

\begin{document}

\maketitle
\date{}

\begin{abstract}
In this paper, we show that the coefficients $\phi_n$ of the formal series expansions $\sum_{n=1}^\infty \phi_n x^n\in x\mathbb C[[x]]$ of center manifolds of planar analytic saddle-nodes grow like $\Gamma(n+a)$ (after rescaling  $x$) as $n\rightarrow \infty$. Here the quantity $a$ is the formal analytic invariant associated with the saddle-node (following the work of J. Martinet and J.-P. Ramis). This growth property of $\phi_n$, which \rspp{is optimal}, was recently (2024) described for a restricted class of nonlinearities by the present author in collaboration with P. Szmolyan. This joint work was in turn inspired by the work of Merle, Rapha\"{e}l, Rodnianski, and
               Szeftel (2022), which described the growth of the coefficients for a system related to self-similar solutions of the compressible Euler. In the present paper, we combine the previous approaches with a Borel-Laplace approach. Specifically, we adapt the Banach norm of Bonckaert and De Maesschalck (2008) in order to capture the singularity in the complex plane. \rsp{Finally, we apply the result to a family of Riccati equations and obtain a partial classification of the analytic center manifolds.}
              %
%
%
\end{abstract}
\textit{Keywords:} center manifolds, Gevrey properties, saddle-nodes, Borel-Laplace;
\newline
\textit{2020 Mathematics Subject Classification:} 34C23, 34C45, 37G05, 37G10


\tableofcontents
\section{Introduction}
In this paper, we consider analytic saddle-nodes of the form
\begin{equation}
\begin{aligned}
 \dot x &= x^2,\\ \dot y &=-(1+ax) y + f(x,y),
 \eqlab{system00}
\end{aligned}
\end{equation}
with $f(0,0)=f'_y(0,0)=f''_{xy}(0,0)=0$. This form can easily be obtained from any saddle-node (with Poincar\'e rank equal $1$) by elementary transformations (including transformations of time), see \secref{nf}. It is well-known that the center manifolds, as graphs over $x$: $y=\phi(x)$, $x\in (-\delta,\delta)$, $\delta>0$, are non-analytic in general, see e.g. \cite{dumortier2006a}. Instead, their formal series expansion 
\begin{align*}
 \phi = \sum_{n=1}^\infty \phi_n x^n\in x \mathbb C[[x]],
\end{align*}
are Gevrey-1, i.e. there are $A,T>0$ such that 
\begin{align}
 \vert \phi_n\vert\le A T^{n-1} (n-1)! \quad \forall\, n\in \mathbb N.\eqlab{gevrey1}
\end{align}
For example, for the system
\begin{align}
 x^2 \frac{dy}{dx} = -y+x,\eqlab{euler}
\end{align}
often attributed to Euler, 
a simple calculation shows that $\phi_n = (-1)^{n-1} (n-1)!$, which illustrates the lack of analyticity and the validity of the estimate \eqref{gevrey1}.
In fact, a stronger result holds in general: The Borel transform $\Phi=\mathcal B(\phi)$ of $\phi$:
\begin{align*}
 \Phi(w) = \sum_{n=0}^\infty \frac{\phi_n}{(n-1)!}w^{n-1},
\end{align*}
which is absolutely convergent for $\vert w\vert<T^{-1}$ by \eqref{gevrey1}, can be endlessly analytically continued, along any ray $re^{i\theta}$, $r\ge 0$, in $\mathbb C$ with direction $\theta\ne -\pi$ so that $\vert \Phi(w)\vert\le C e^{P\vert w\vert}$, $C,P>0$.
This means that the series is $1$-summable, see e.g. \cite{balser1994a,bonckaert2008a}, and that $\phi$ can be realized, \rspp{as an analytic function} on a local sectorial domain $\omega$, by the Laplace transform $\mathcal L$ of (the analytically extended) $\Phi(w)$:
\begin{align*}
 \phi(x) = \int_0^{\infty} e^{-x^{-1}w} \Phi(w) dw,\quad x\in \omega.
\end{align*}
We refer to further details below and to \cite{bonckaert2008a}. For \eqref{euler}, we have 
$\Phi(w) = \frac{1}{1+w}$ and $\phi(x) = \int_0^\infty e^{-x^{-1} w} \frac{1}{1+w}dw$.

\subsection{Main result}
In this paper, we prove the following:
\begin{theorem}\thmlab{main}
 Let $\phi=\sum_{n=1}^\infty \phi_n x^n\in x\mathbb C[[x]]$ denote the unique formal series expansion of the center manifold of \eqref{system00}. Then there exists a number $S_\infty\in \mathbb R$ so that 
 \begin{align}
  \frac{(-1)^n \phi_n}{\Gamma(n+a)}\rightarrow S_\infty\quad \mbox{for}\quad  n\rightarrow \infty.\eqlab{mnasymp}
 \end{align}
\end{theorem}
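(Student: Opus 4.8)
The plan is to establish \eqref{mnasymp} via the Borel--Laplace machinery: translate the growth statement about $\phi_n$ into an analyticity and singularity statement about the Borel transform $\Phi=\mathcal B(\phi)$. First I would derive the equation satisfied by $\Phi$ by applying the Borel transform to the invariance equation $x^2\phi'(x)=-(1+ax)\phi(x)+f(x,\phi(x))$: multiplication by $x$ becomes integration, $x^2\phi'$ becomes a convolution-type term, and the nonlinearity $f$ becomes a (locally convergent) convolution product of Borel transforms. The result is a fixed-point equation for $\Phi$ on a suitable space of holomorphic functions with at most exponential growth along rays avoiding the negative real axis; this is where I would adapt the Bonckaert--De Maesschalck Banach norm so that it tolerates a prescribed singularity of $\Phi$ at $w=-1$ (the location suggested by the linear part $-(1+ax)y$ and the Euler example).

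The key point is that the coefficient asymptotics $(-1)^n\phi_n/\Gamma(n+a)\to S_\infty$ is, by a standard transfer/Abelian--Tauberian argument, equivalent to $\Phi$ having a singularity at $w=-1$ of the precise type
\begin{align*}
 \Phi(w) = \frac{S_\infty\,\Gamma(a)}{(1+w)^{a}}\,\bigl(1+o(1)\bigr)\quad\text{as }w\to -1,
\end{align*}
(up to a regular part and the usual normalization constants relating $\Gamma(n+a)$ to the Taylor coefficients of $(1+w)^{-a}$). So the heart of the proof is: (i) show $\Phi$ extends holomorphically to a neighbourhood of the slit disk/sector around $w=-1$ except at $w=-1$ itself, with controlled growth; and (ii) identify the leading singular behaviour at $w=-1$ as exactly $(1+w)^{-a}$ times a constant. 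For (ii) I would linearize the $\Phi$-equation near $w=-1$: the dominant balance there should reproduce, at leading order, the ODE whose solutions behave like $(1+w)^{-a}$, with $a$ entering precisely because it is the formal (and, by Martinet--Ramis, analytic) invariant of the saddle-node. The constant $S_\infty$ is then defined as the coefficient of this leading singular term — its reality ($S_\infty\in\mathbb R$) follows because \eqref{system00} has real coefficients, so $\phi_n\in\mathbb R$ and the singular data are real.

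Concretely the steps are: (1) set up the Borel-transformed fixed-point equation and prove existence/uniqueness of $\Phi$ in the adapted Banach space, recovering $1$-summability along all directions $\theta\ne-\pi$; (2) prove $\Phi$ continues to a punctured neighbourhood of $w=-1$ with at worst an algebraic-type singularity, by a bootstrap on the fixed-point equation localized near $w=-1$; (3) extract the leading term of that singularity, showing it is $c\,(1+w)^{-a}$ for a real constant $c$, by comparison with the explicitly solvable linear model (the Euler-type / Riccati model) whose Borel transform is known; (4) invoke the transfer theorem relating a $(1+w)^{-a}$-type singularity of $\Phi$ at $w=-1$ to the asymptotics of its Taylor coefficients $\phi_n/(n-1)!$, and convert back to $\phi_n$ using $\Gamma(n+a)/(n-1)!\sim n^{a}$ type estimates, obtaining \eqref{mnasymp} with $S_\infty$ proportional to $c$. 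I expect the main obstacle to be step (3): controlling the nonlinear convolution term near the singularity well enough to show it does not contaminate the leading $(1+w)^{-a}$ asymptotics (only the subleading, $o(1)$, part), which is precisely where the adapted Bonckaert--De Maesschalck norm — engineered to measure functions against the weight $|1+w|^{-a}$ — must do the work. A secondary difficulty is ensuring uniformity of the continuation and estimates as the ray direction $\theta$ approaches $-\pi$ from either side, so that the singular data at $w=-1$ is well-defined and single-valued on each side, yielding a single limit $S_\infty$.
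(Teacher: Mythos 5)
Your step (1) — setting up the Borel fixed-point equation and solving it in a norm adapted to the singularity at $w=-1$ — matches the paper's strategy, but the heart of your argument, steps (2)--(4), rests on a step that is a genuine gap: you assert that the coefficient asymptotics \eqref{mnasymp} follows by a ``standard transfer/Abelian--Tauberian argument'' from the leading singular behaviour of $\Phi$ at $w=-1$. A Darboux/Flajolet--Odlyzko type transfer needs two things you never obtain: analytic continuation of $\Phi$ \emph{past} the circle $\vert w\vert=1$ into a domain surrounding $w=-1$ (minus a cut), and an actual asymptotic expansion at $w=-1$ with uniform error control, not merely an upper bound. The adapted norm \eqref{normG2} delivers only the bound $\vert\Phi(w)\vert\lesssim \vert w\vert\,\vert 1+w\vert^{-a-1}$ on $\Omega=S_\alpha\cup B_1(0)$, whose closure just touches $w=-1$; it gives neither continuation beyond the disk nor the coefficient of the leading singular term, and the paper explicitly flags in the Discussion that making the Darboux route rigorous ``requires a more detailed analysis of $\Phi$ in $\mathbb C\backslash(-\infty,0]$'' — i.e.\ it is exactly what is avoided. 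Your singular model is also off by one power: $\phi_n\sim(-1)^nS_\infty\Gamma(n+a)$ corresponds, after division by $(n-1)!$, to Taylor coefficients of size $n^{a}$, i.e.\ to a singularity of type $(1+w)^{-(a+1)}$, not $(1+w)^{-a}$; consistently, inverting the full linear part (\lemmaref{solYH}, \eqref{YH}) produces the factor $(1+w)^{-a-1}$, which is why the weight in \eqref{normG2} is $\vert 1+w\vert^{a+1}$ — a norm measuring $\Phi$ against $\vert 1+w\vert^{-a}$, as you propose, would not close the fixed-point argument.

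The paper's actual completion of the proof is different and more elementary: the Borel-plane construction only yields the non-optimal bound $\vert\phi_n\vert\le C\Gamma(n+a+1)$ (\lemmaref{borellaplaceest}), and $S_\infty$ is never read off from the singularity. Instead, \thmref{main} is finished at the level of the coefficient recursion: by \lemmaref{mk0linearcase}, $\phi_n=(-1)^n\Gamma(n+a)\sum_{j\le n}(-1)^jp_j/\Gamma(j+a)$ with $p_n$ the coefficients of $f(x,\phi(x))$, and the bootstrap of \propref{final} shows that the crude bound on $\phi_n$ forces $\vert p_n\vert\le K\Gamma(n+a-3)$, so the series defining $S_\infty$ converges absolutely and \eqref{mnasymp} follows. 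You also omit the normal-form reduction of \secref{nf} (to $a\ge 2$ and $\phi\in x^2\mathbb C[[x]]$), without which your singular model cannot even be formulated uniformly in $a$: for $a\in-\mathbb N$ the Borel transform acquires logarithmic singularities (\remref{resonance}) and for small $a$ the inversion formula \eqref{YH} loses integrability. To repair your outline you would either have to carry out the full singularity analysis in a domain extending beyond the unit circle near $w=-1$ — which the paper leaves open — or switch, as the paper does, to the coefficient-level bootstrap.
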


To illustrate how \eqref{mnasymp} occurs, we first consider the case $f(x,y)=f(x)$ (where $f$ is independent of $y$) in \eqref{system00}:
\begin{align}
 x^2 \frac{dy}{dx}+(1+ax)y=f(x),\quad f(x)=\sum_{n=1}^\infty f_n x^n.\eqlab{findpy}
\end{align}
This differential equation for $y=\phi(x)$ is linear in $y$ and one can solve explicitly for the $\phi_n$'s of the formal series. Indeed, inserting the formal series $y=\sum_{n=1}^\infty \phi_n x^n\in x\mathbb C[[x]]$  into \eqref{findpy}
leads to 
\begin{align*}
 \sum_{n=1}^\infty \left( (n +a)\phi_{n} x^{n+1}+ \phi_n x^n\right)=\sum_{n=1}^\infty f_n x^n,
\end{align*}
and therefore to 
the recursion relation:
\begin{align}
 \phi_n + (n-1+a)\phi_{n-1} = f_n\quad \forall \,n\in \mathbb N,\eqlab{recmk0}
\end{align}
with $\phi_0= 0$. 
\begin{lemma}\lemmalab{mk0linearcase}
Let $\Gamma:\mathbb C\backslash (-\mathbb N_0)\rightarrow \mathbb C$ denote the gamma function, see \appref{gamma}. Moreover, suppose that $a>-1$ and define 
\begin{align}
 S_n :=\sum_{j=1}^n \frac{(-1)^j f_j}{\Gamma(j+a)}.
\end{align}
Then the solution of the recursion relation \eqref{recmk0} with $\phi_0=0$ is
 \begin{align}
 \phi_n  = (-1)^n \Gamma(n+a) S_n.
\end{align}
\end{lemma}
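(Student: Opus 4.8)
The plan is to solve the recursion \eqref{recmk0} explicitly by the standard method for first-order linear recursions, namely by introducing an integrating factor that telescopes the left-hand side. First I would observe that the homogeneous recursion $\phi_n = -(n-1+a)\phi_{n-1}$ has solution $\phi_n^{\mathrm{hom}} = (-1)^n \Gamma(n+a)/\Gamma(a) \cdot \phi_0^{\mathrm{hom}}$, which suggests dividing through by $(-1)^n \Gamma(n+a)$. Concretely, set $\psi_n := \phi_n / \big((-1)^n \Gamma(n+a)\big)$; then since $\Gamma(n+a) = (n-1+a)\Gamma(n-1+a)$, the relation \eqref{recmk0} becomes, after dividing by $(-1)^n\Gamma(n+a)$,
\begin{align*}
 \psi_n - \psi_{n-1} = \frac{f_n}{(-1)^n \Gamma(n+a)} = \frac{(-1)^n f_n}{\Gamma(n+a)},
\end{align*}
where I use $(-1)^{-n} = (-1)^n$. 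This is now a plain telescoping difference equation.

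Next I would sum the telescoped relation from $j=1$ to $n$. Because $\phi_0 = 0$ and $a > -1$ ensures $\Gamma(a)$ (hence $\psi_0$) is well-defined and finite — so the initial term contributes $\psi_0 = \phi_0/\Gamma(a) = 0$ — the telescoping sum gives $\psi_n = \psi_0 + \sum_{j=1}^n \big(\psi_j - \psi_{j-1}\big) = \sum_{j=1}^n (-1)^j f_j/\Gamma(j+a) = S_n$. Unwinding the substitution yields $\phi_n = (-1)^n \Gamma(n+a) S_n$, which is exactly the claimed formula. A brief induction check then confirms that this sequence indeed satisfies \eqref{recmk0}: assuming $\phi_{n-1} = (-1)^{n-1}\Gamma(n-1+a)S_{n-1}$, one computes $\phi_n + (n-1+a)\phi_{n-1} = (-1)^n\Gamma(n+a)S_n + (n-1+a)(-1)^{n-1}\Gamma(n-1+a)S_{n-1} = (-1)^n\Gamma(n+a)(S_n - S_{n-1}) = (-1)^n\Gamma(n+a)\cdot \tfrac{(-1)^n f_n}{\Gamma(n+a)} = f_n$.

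There is essentially no serious obstacle here — the main point requiring a moment's care is the role of the hypothesis $a > -1$: it guarantees that $\Gamma(j+a)$ is defined and nonzero for all $j \in \mathbb N$ (so the division is legitimate and $S_n$ makes sense) and, more to the point, that $\Gamma(a)$ itself is finite so that the initial condition $\phi_0 = 0$ translates cleanly into $\psi_0 = 0$ rather than an indeterminate expression. If one wished to avoid even mentioning $\psi_0$, one could equivalently just verify the closed-form by induction on $n$ as above, starting from the base case $n=1$: there \eqref{recmk0} reads $\phi_1 = f_1$ (since $\phi_0 = 0$), while the formula gives $\phi_1 = (-1)^1\Gamma(1+a)S_1 = -\Gamma(1+a)\cdot \tfrac{-f_1}{\Gamma(1+a)} = f_1$, matching. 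I would present the integrating-factor derivation as the main argument since it explains where the formula comes from, and note the inductive verification as a remark.
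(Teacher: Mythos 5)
Your proof is correct and, in substance, the same as the paper's: the paper simply verifies the closed form by induction using $\Gamma(z+1)=z\Gamma(z)$, which is exactly the inductive check you carry out in your second paragraph and remark, while your integrating-factor/telescoping derivation is a nice additional explanation of where the formula comes from. One small inaccuracy in the telescoping narrative: for $a>-1$ the value $a=0$ is allowed, and there $\Gamma(a)$ is \emph{not} finite ($0\in-\mathbb N_0$ is a pole), so the phrase ``$\Gamma(a)$ itself is finite'' and the expression $\psi_0=\phi_0/\Gamma(a)$ are not literally legitimate at $a=0$; this is harmless and easily repaired either by simply defining $\psi_0:=0$ (consistent with $\phi_0=0$), or by handling $n=1$ separately (where \eqref{recmk0} gives $\phi_1=f_1$ since $\phi_0=0$) and telescoping only from $j=2$, where $\Gamma(j+a)$ is defined and positive because $j+a>0$ — and in any case your self-contained inductive verification does not use $\Gamma(a)$ at all, so the lemma is fully proved.
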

\begin{proof}
 The result can easily be proven by induction using the base case $\phi_0=0$ and the basic property of $\Gamma$: $\Gamma(z+1)=z \Gamma(z)$, in the induction step. 
%
%
\end{proof}
Since $f$ is analytic, we have 
\begin{align*}
 \vert f_k\vert \le A T^{k},
\end{align*}
for some $A,T>0$,
and the sum
\begin{align}
 S_\infty := \lim_{n\rightarrow \infty} S_n=\sum_{n=1}^\infty \frac{(-1)^j f_j}{\Gamma(j+a)},\eqlab{Sinfty0}
\end{align}
is therefore absolutely convergent for any $a>-1$:
\begin{align*}
 \vert S_\infty\vert \le \sum_{j=1}^\infty \frac{\vert f_j\vert}{\Gamma(j+a)}\le A\sum_{j=1}^\infty \frac{T^{j}}{\Gamma(j+a)}<\infty.
\end{align*}
Here we have used Stirling's formula, see \eqref{stirling0} below, for the boundedness of the majorant series.
This calculation proves \eqref{mnasymp} in the case where $f$ is independent of $y$ and $a>-1$. 
For $a\le -1$, we first perform a translation: $y=\sum_{n=1}^{M} \phi_n x^n+\tilde y$ with $M=\lceil -a\rceil$ so that 
\begin{align}
 x^2 \frac{d\tilde y}{dx}+(1+ax)\tilde y=\tilde f(x),\quad \tilde f(x):=- (M+a)\phi_{M}x^{M+1}+ \sum_{k=M +1}^\infty f_k x^k.
\end{align}
Then by proceeding as above for $a>-1$, we can complete the proof of \eqref{mnasymp} in the case where $f$ is independent of $y$. The remainder of the paper is devoted to the fully nonlinear case.

\subsection{\rsp{Relation to previous work}}
In \cite{MR4445442}, the authors construct certain $C^\infty$-smooth self-similar solutions of the isentropic ideal compressible Euler equations. These solutions  were used in \cite{merle2022a} to determine finite time energy blowup solutions of Navier-Stokes equations (isentropic ideal compressible), see also \cite{merle2022b} for applications to the defocusing nonlinear Schr{\"o}dinger equation. In essence, see also \cite{euler}, the $C^\infty$-smooth solutions were constructed from a bifurcation problem involving an analytic weak-stable invariant manifold of a nonresonant node. These invariant manifolds are delicate objects but the authors could control these near saddle-node bifurcations (where the resonances of the node accumulate). This description rested upon a statement like \eqref{mnasymp} for the center manifold at the saddle-node. In particular, it turns out \rsp{{that the sign of $S_\infty \ne 0$ (so that the center manifold is nonanalytic) relates to the position of the analytic weak-stable invariant manifold for parameter values near the saddle-node}} (see also \cite[\rsp{Corollary 3.6}]{euler}). The authors of \cite{MR4445442} proved their version of \eqref{mnasymp} for their specific (\rspp{rational}) nonlinearity using \rspp{the implicit equation (in a fix-point formulation)} for the coefficients $\phi_n$ that arises from solving \eqref{recmk0} for $\phi_n$, see \lemmaref{mk0linearcase}. Subsequently, in \cite{euler} the authors provided a dynamical systems oriented approach to the phenomena of \cite{MR4445442} for a general set of equations. However, the authors imposed a \rsp{significant restriction} on the nonlinearity in order to obtain \eqref{mnasymp}. In particular, they were not able to generalize the approach of \cite{MR4445442}. \rsp{In further details, the paper \cite{euler} writes $f(x,y)$ in the form $f(x)+\mu h(x,y)$ with $f(x)=\mathcal O(x^2)$, $h(x,y)=\mathcal O(x^2 y,y^2)$ and assumes that $0<\mu\ll 1$. In this way, \eqref{mnasymp} is essentially obtained by perturbing away from the $\mu=0$ case, which is described in \lemmaref{mk0linearcase}. (The reference \cite{euler} also assume that $a>-2$ but they argue that this is not essential.)}

The quantity $S_\infty$ in \thmref{main} is somehow related (the connection is not clear to the author) to \textit{the translational part of the analytic invariants} of the saddle-node, see \cite{martinet1983a}. Indeed, the center manifold is analytic if and only if this translational part is trivial, see also \cite[Section 1]{loray2004a}. However, whereas $S_\infty\ne 0$ clearly implies that the center manifold is nonanalytic, it is at this stage not known (to the best of my knowledge) whether the converse is true in general (it holds in the $y$-linear case, \rsp{see \cite[Lemma 2.4]{euler}}). \rsp{We state this as a general research question}:

\

\rsp{\textbf{Question I}: Is $\phi \in x\mathbb C[[x]]$ convergent if $S_\infty=0\Longrightarrow$?}

\

\rsp{It is a basic fact that $\phi\in x \mathbb C[[x]]$ is convergent if and only if the associated Borel transform $\Phi=\mathcal B(\phi)$ is an entire function with exponential growth, i.e. there is an $\epsilon>0$ small enough such that $\vert \Phi(w)\vert e^{-\epsilon^{-1} \vert w\vert}<\infty$ for all $w\in \mathbb C$, see \secref{BL}. Our proof of \thmref{main} shows that $\Phi$ has a singularity in the Borel plane if $S_\infty\ne 0$. But we do not know if $S_\infty$ captures the singularities completely so that if $S_\infty=0$ then there are no singularities of $\Phi$. We leave \textbf{Question 1} in full generality to future work, but do discuss it further below in the specific context of a family of Riccati equations (see \secref{riccati}). } 

%
%
%
%

\rsp{The use of Borel-Laplace for differential equations has a long history, dating back to \'Emile Borel in the nineteenth century, see \cite{wasow1965a} and the preface in \cite{mitschi2016a}. Later during the 1980s-1990s the theories of $k$-summability and multisummability were developed by  several authors J.-P. Ramis, Y. Sibuya, Martinet, Malgrange, Balser, Ecalle, etc., see \cite{balser1994a} and references therein,
which enabled a complete framework for asymptotic series of linear differential equations with analytic coefficients. Moreover, summability is central to the development of exact WKB, see \cite{voros1983a}. 
 Subsequently, Braaksma \cite{braaksma1992a} applied the theory of multisummability to nonlinear equations. Summability is also central (through accelero-summation) to Ecalle's proof of Dulac's theorem for polynomial differential equations, see \cite{ecalle}. 
More recently, Borel-Laplace has been used to study normal forms \cite{bonckaert2008a,new} as well as slow manifolds \cite{de2020a}.
In general terms, the Borel-Laplace method assigns analytic functions, that are defined on open and local sectorial domains, to each $k$-summable formal series. In general, the functions do not coincide on overlapping domains and this is related to Stokes phenomena, resurgence and more broadly to the presence of singularities in the complex Borel plane, see \cite{mitschi2016a}. 

The constant $S_\infty$ is reminiscent of \textit{Stokes constants} that are frequently associated with exponentially small splitting phenomena. For example, in the case of the unfolding of the zero-Hopf bifurcation, see e.g. \cite{baldom2013a}, the situation is similar to \cite{euler,merle2022a} (and analytic weak-stable  manifolds near saddle-nodes): The Stokes constant is obtained from the unperturbed system $\epsilon=0$, and when it is nonzero, then the leading order exponentially small splitting of the invariant manifolds can be determined. As discussed in \cite[Section 4]{new}, a nonzero Stokes constant also relates to the lack of analyticity of center-like invariant manifolds of generalized saddle-nodes. 
 In all cases, that I am aware of, distinguished solutions of ``inner equations'' (see e.g. \cite{baldoma2012a}), that are frequently associated to such problems, can be obtained from Borel-Laplace theory. For example, \cite[Theorem 4]{baldom2013a} follows from \cite[Theorem 3]{bonckaert2008a} (as a corollary of the invariance of $y=0$ for the normal form \cite[Eq. (8)]{bonckaert2008a}). The same is true for difference equations, see \cite{baldoma2012a} and \cite{gelfreich2001a}. 

Parallel to these rigorous studies, there is the more applied method of exponential asymptotics, \cite{chapman1998a}. It is my understanding that this approach primarily relates to optimal truncation of the divergent series, using ``factorial-over-power ansatz'' for the late terms of series, see \cite{dingle1973a}. (Optimal truncation was also crucial in \cite{euler} for $\epsilon>0$ due to the resonances of the node.) Notice that by Stirling's formula, see e.g. \eqref{stirling} below, $\phi_n$ in \thmref{main} also has factorial-over-power growth as $n\rightarrow \infty$. Exponential asymptotics has proven very successful in recent years as a practical tool. It has also been used to describe phenomena (like snaking in \cite{chapman2009a}) that seem to escape more rigorous approaches.
Exponential asymptotics is also intrinsically related to resurgence and Borel-Laplace, see \cite{crew2024a}.

} 

\subsection{Organization of the paper}
The paper is organized as follows: In \secref{nf}, we present a convenient normal form. In this normal form, it will be important that $a>1$ and that $\phi = \sum_{k=2}^\infty \phi_n x^n\in x^2\mathbb C[[x]]$ (see \remref{resonance} below). \rsp{In \secref{BL}, we then review the Borel-Laplace approach of \cite{bonckaert2008a} and in \secref{adapt} we present an adaptation of their Banach space norm that enables us to describe the details of the singularity of the Borel transform $\Phi$ of $\phi$ in the Borel plane. This leads to an initial estimate of $\phi_n$ for $n\rightarrow \infty$, see \lemmaref{borellaplaceest}, which we then bootstrap in  \secref{boot} to complete the proof of \thmref{main}}.  \rsp{In \secref{riccati}, we then combine the theory with numerical computations in order to obtain a partial characterization of analytic center manifolds of a family of Riccati equations:}
\begin{align}
 \rsp{x^2 \frac{dy}{dx} +(1+a x) y = b x^2+ c y^2},\eqlab{riccati0}
\end{align}
\rsp{with parameters $a,b,c\in \mathbb R$}.
Finally, in \secref{discuss} we discuss the result of the paper. \rspp{Important properties of the gamma function $\Gamma:\mathbb C\setminus (-\mathbb N_0)\rightarrow \mathbb C$ are collected in \appref{gamma}}.

\subsection{Acknowledgement}
The author wishes to thank Peter De Maesschalck for useful discussions on the Borel-Laplace approach and for feedback on an earlier version. Moreover, the author wishes to thank Peter Szmolyan for the fruitful collaboration on \cite{euler}, which laid the foundation for the present work. 
\section{A normal form}\seclab{nf}

Consider any planar real-analytic system having a saddle-node at the origin. Let $\lambda\in \mathbb R\backslash \{0\}$ denote the single nonzero eigenvalue of the linearization. 
Then there exists local coordinates such that 
\begin{equation}\eqlab{xysn}
\begin{aligned}
\dot x &= g(x,y),\\
\dot y &= \lambda y+h(x,y),
\end{aligned}
\end{equation}
where neither $g$ nor $h$ contain constant or linear terms. Suppose that 
\begin{align*}
 g''_{xx}(0,0) \ne 0.
\end{align*}
Then the blow-up transformation $(x,\tilde y)\mapsto y$ defined by $y=x \tilde y$ brings \eqref{xysn} into the following locally defined system:
\begin{align*}
  \frac12 g''_{xx}(0,0) x^2 \frac{d\tilde y}{dx} &=  \lambda \tilde y+\mathcal O(x,x\tilde y,\tilde y^2).
\end{align*}
(Notice that the transformation is of blow-up type since $(0,\tilde y)\mapsto y=0$.)
Since we are interested in center manifolds $y=\phi(x)$, we have \rspp{here} eliminated time.
The remainder terms can clearly be expressed in terms of $g$ and $h$, but this will not be relevant. Now, we finally put $$x=\frac{-2\lambda}{g''_{xx}(0,0)}\tilde x,$$ and obtain \eqref{system00} in the form
\begin{equation}\eqlab{system01}
\begin{aligned}
  x^2  \frac{dy}{dx}&=- (1+a x)  y + f( x, y),
\end{aligned}
\end{equation}
with $a\in \mathbb R$, $f$ real-analytic and $f(0,0)= f'_y(0,0)= f''_{xy}(0,0)=0$ after dropping the tildes. 

We now proceed to normalize \eqref{system01} further.
\begin{lemma}\lemmalab{this1}
 Consider \eqref{system01} with $f$ real-analytic and let $y=\sum_{n=1}^\infty \phi_n x^n\in x\mathbb C[[x]]$ denote the formal series expansion of the center manifold. Define $M=\max (2,2+\lceil -a\rceil)$ so that $a_M:=a+M\ge 2$. Then there is a locally defined real-analytic function $q=q(x)$, $q(0)=1$, such that the blow-up transformation $(x,\tilde y)\mapsto y$ defined by
 \begin{align}
  y =  \sum_{n=1}^{M+1} \phi_n x^n+x^{M} q(x) \tilde y,\eqlab{final}
 \end{align}
brings \eqref{system01} into the form
\begin{align}
 x^2 \frac{d\tilde y}{dx} = -(1+a_Mx)\tilde y + x^2 f_0(x)+x^2 \tilde y^2 f_2(x,\tilde y),\eqlab{system0}
\end{align}
where $f_0$ and $f_2$ are locally defined real-analytic functions.
\end{lemma}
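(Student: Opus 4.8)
The plan is to realize the single substitution \eqref{final} as the composition of two elementary moves — a polynomial translation that removes the forced low-order terms in $x$, followed by a rescaling of $\tilde y$ by an analytic factor $x^M q(x)$ chosen precisely to normalize the linear part — while keeping careful track of orders in $x$.

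First I would substitute $y = P(x) + z$ with $P(x) := \sum_{n=1}^{M+1}\phi_n x^n$, the degree-$(M+1)$ truncation of the formal center manifold. Since $\phi$ is by definition a formal solution, $x^2\phi' + (1+ax)\phi - f(x,\phi)=0$ in $\mathbb C[[x]]$, and $\phi - P = \mathcal O(x^{M+2})$, the ``$x$-only'' remainder
$$\mathcal R(x) := f(x,P(x)) - (1+ax)P(x) - x^2 P'(x)$$
is real-analytic near $0$ and vanishes to order $\ge M+2$: writing $\mathcal R = [f(x,P)-f(x,\phi)] + (1+ax)(\phi-P) + x^2(\phi-P)'$ and using that $\partial_y f$ is $\mathcal O(x)$ along the center manifold, each term is $\mathcal O(x^{M+2})$. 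Taylor-expanding $f$ in its second slot about $P(x)$ as $f(x,P+z) = f(x,P) + f_y(x,P)\,z + z^2\,\widetilde R(x,z)$ with $\widetilde R$ real-analytic, the equation for $z$ reads
$$x^2 z' = \big[-(1+ax) + f_y(x,P(x))\big]z + \mathcal R(x) + z^2\,\widetilde R(x,z).$$

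Next I would set $z = x^M q(x)\tilde y$, with $q$ to be determined, and divide the resulting identity by $x^M q$. Using $x^2 (x^M q)'/(x^M q) = Mx + x^2 q'/q$, the linear-in-$\tilde y$ coefficient becomes $-(1+(a+M)x) + f_y(x,P(x)) - x^2 q'/q$, so the choice forced on $q$ is $q'/q = f_y(x,P(x))/x^2$; once the right-hand side is seen to be analytic at $0$, this is solved by the explicit quadrature $q(x)=\exp\!\big(\int_0^x f_y(s,P(s))\,s^{-2}\,ds\big)$, which is real-analytic with $q(0)=1$, so \eqref{final} is a genuine (blow-up-type, invertible for $x\neq 0$) change of coordinates. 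With this $q$ the equation becomes $x^2\tilde y' = -(1+a_M x)\tilde y + \mathcal R(x)/(x^M q(x)) + x^M q(x)\,\widetilde R(x,x^M q(x)\tilde y)\,\tilde y^2$, and I would then read off $f_0(x):=\mathcal R(x)/(x^{M+2}q(x))$ — analytic since $\mathcal R=\mathcal O(x^{M+2})$ and $q(0)=1$ — and $f_2(x,\tilde y):=x^{M-2}q(x)\,\widetilde R(x,x^M q(x)\tilde y)$ — analytic since $M\ge 2$ — which gives exactly \eqref{system0}. The requirement $a_M=a+M\ge 2$ is what pins down $M=\max(2,2+\lceil -a\rceil)$, and $M\ge 2$ is also exactly what $f_2$ needs.

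Everything else is routine bookkeeping, so I expect the only genuinely delicate point to be the analyticity of $q$, i.e. that $f_y(x,P(x))$ is divisible by $x^2$. Its value at $x=0$ is $f'_y(0,0)=0$, and its $x$-derivative at $0$ is $f''_{xy}(0,0)+f''_{yy}(0,0)\,\phi_1$; the hypothesis $f''_{xy}(0,0)=0$ kills the first term, and since $\phi_1=\phi'(0)=0$ — which, if not automatic, can be arranged by a preliminary linear straightening $y=\phi_1 x+\hat y$ of the tangent line of the center manifold — the second term also vanishes, so $f_y(x,P(x))=\mathcal O(x^2)$ as required. A secondary point to watch is the vanishing order of $\mathcal R$: one must truncate $\phi$ at degree $M+1$ rather than $M$, so that after dividing by $x^M$ the remainder still carries a factor $x^2$ and not merely a factor $x$.
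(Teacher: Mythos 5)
Your construction is essentially the paper's: the paper composes the translation $y=\sum_{n=1}^{N+1}\phi_n x^n+\tilde y$, the rescaling $y=q(x)\tilde y$ with $q(x)=\exp\left(\int_0^x s^{-2}f_1(s)\,ds\right)$, where $f_1(x)=f_y(x,P(x))$, and the blow-up $y=x^N\tilde y$ with $N=M$ --- which is exactly your single substitution \eqref{final} --- and your order bookkeeping ($\mathcal R=\mathcal O(x^{M+2})$ because $P$ is the degree-$(M+1)$ truncation of the formal solution, the definitions of $f_0$ and $f_2$, and the role of $M\ge 2$) is correct and matches the paper.

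The gap sits precisely at the point you flag as delicate. You need $f_y(x,P(x))=\mathcal O(x^2)$, i.e. $f''_{xy}(0,0)+f''_{yy}(0,0)\phi_1=0$; the hypothesis kills the first term, but $\phi_1=f'_x(0,0)$ is \emph{not} automatically zero under the stated hypotheses (the form \eqref{system01} allows a term linear in $x$), and your proposed remedy --- a preliminary straightening $y=\phi_1 x+\hat y$ --- does not repair this. Writing $\hat f(x,\hat y)=f(x,\phi_1 x+\hat y)-(1+ax)\phi_1 x-x^2\phi_1$, one has $\hat f_{\hat y}(x,\hat\phi(x))=f_y(x,\phi(x))$ identically (with $\hat \phi=\phi-\phi_1 x$ the new center manifold), so the obstruction $\frac{d}{dx}f_y(x,\phi(x))\big|_{x=0}=f''_{yy}(0,0)\phi_1$ is unchanged; equivalently, the straightening creates a new cross term $\hat f''_{x\hat y}(0,0)=f''_{yy}(0,0)\phi_1$, and absorbing it into the linear part replaces $a$ by $a-f''_{yy}(0,0)\phi_1$, which is incompatible with the asserted conclusion $a_M=a+M$ for the \emph{original} $a$. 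So your argument is complete only when $f''_{yy}(0,0)\,f'_x(0,0)=0$ (for instance when $f'_x(0,0)=0$). You have in fact isolated exactly the step on which the paper's own proof also rests --- it asserts $f_1(0)=f_1'(0)=0$ for $f_1(x)=f_y(x,P(x))$ without further comment --- but the straightening you offer is not a valid justification of it; you should either verify or impose $f''_{yy}(0,0)f'_x(0,0)=0$, or else track the resulting shift of $a$ through the statement.
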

\begin{proof}
Consider first the translation of $y$, given by
  $y=\sum_{n=1}^{N+1} \phi_n x^n+\tilde y$, for any $N\in \mathbb N$. Then by construction
\begin{align}
 x^2 y' =- (1+ax)y+ x^{N+2} f_0(x)+ yf_1(x)  + y^2f_2(x,y),\eqlab{yeqn02}
\end{align}
after dropping the tildes, where $f_1(0)=f_1'(0)=0$. 

Next, define $q(x) = e^{\int_0^x s^{-2} f_1(s) ds}$, which is well-defined and analytic since $f_1(x)=\mathcal O(x^2)$. In particular, $q(0)=1$. Then it is a simple calculation to shows that $y=q(x)\tilde y$ leads to the form \eqref{yeqn02}, upon removing the tildes, with $f_1=0$, i.e.
\begin{align}
 x^2 y' =- (1+ax)y+ x^{N+2} f_0(x) + y^2f_2(x,y),\eqlab{yeqn03}
\end{align}
for some new locally defined analytic functions $f_0$ and $f_2$. 
Finally, we introduce the blow-up $y=x^N \tilde y$. Then \eqref{yeqn03} becomes
\begin{align*}
 x^2 y' = -(1+(a+N) x)y + x^{2} f_0(x) + x^N y^2 f_2(x,x^N y),
\end{align*}
upon dropping the tildes. The result then follows upon taking $N=M:=\max(2, 2+\lceil - a\rceil )$.
\end{proof}
%
%

\begin{lemma}\lemmalab{this2}
 Let $M=\max (2,2+\lceil -a \rceil)$, $a_M=a+M\ge 2$, $\phi_1,\ldots,\phi_{M+1}\in \mathbb R$, $q:B_r(0)\rightarrow \mathbb C$, $r>0$, analytic and $q(0)=1$, and suppose that $\tilde y=\sum_{n=2}^\infty \widetilde \phi_n x^n\in x^2\mathbb C[[x]]$ satisfies
 \begin{align*}
  \frac{(-1)^n \widetilde \phi_n}{\Gamma(n+a_M)}\rightarrow S_\infty\quad \mbox{for}\quad n\rightarrow \infty,
 \end{align*}
for some $ S_\infty\in \mathbb R$. 
Then $y=\sum_{n=1}^\infty \widehat \phi_n x^n\in x\mathbb C[[x]]$ defined by \eqref{final}:
\begin{align*}
 \sum_{n=1}^\infty \widehat \phi_n x^n:=\sum_{n=1}^{M+1} \phi_n x^n+x^{M} q(x) \sum_{n=2}^\infty \widetilde \phi_n x^n,
\end{align*}
satisfies the statement of \thmref{main}, i.e.
\begin{align*}
  \frac{(-1)^n \widehat \phi_n}{\Gamma(n+a)}\rightarrow S_\infty\quad \mbox{for}\quad n\rightarrow \infty.
 \end{align*}
\end{lemma}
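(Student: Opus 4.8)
The plan is to read off $\widehat\phi_n$ from \eqref{final} as a convolution of the Taylor coefficients of $q$ with the coefficients $\widetilde\phi_m$, modulo a fixed shift in the index, and to verify that such a convolution transports the prescribed asymptotics. Write $q(x)=\sum_{j\ge 0}q_j x^j$ with $q_0=1$; since $q$ is analytic on $B_r(0)$ we may fix $r_0\in(0,r)$ and a constant $C_0>0$ with $|q_j|\le C_0 r_0^{-j}$ for all $j$. Comparing coefficients of $x^n$ in the defining relation for $\widehat\phi_n$, the polynomial part $\sum_{n=1}^{M+1}\phi_n x^n$ only contributes for $n\le M+1$, so for every $n\ge M+2$ one has the exact identity
\begin{align*}
 \widehat\phi_n=\sum_{\ell=0}^{n-M-2}q_\ell\,\widetilde\phi_{n-M-\ell}.
\end{align*}
Since $\Gamma(n+a)\to\infty$, the first $M+1$ coefficients are irrelevant for the limit, and it suffices to analyse this sum as $n\to\infty$.

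First I would insert the hypothesis in the form $\widetilde\phi_m=(-1)^m\Gamma(m+a_M)(S_\infty+\varepsilon_m)$ with $\varepsilon_m\to 0$ (hence $E:=\sup_{m\ge 2}|\varepsilon_m|<\infty$), and use $a_M=a+M$ to rewrite $\Gamma((n-M-\ell)+a_M)=\Gamma(n-\ell+a)$. Dividing the identity above by $\Gamma(n+a)$ and collecting the sign $(-1)^n(-1)^{n-M-\ell}=(-1)^{M+\ell}$ gives
\begin{align*}
 \frac{(-1)^n\widehat\phi_n}{\Gamma(n+a)}
 &=(-1)^M(S_\infty+\varepsilon_{n-M})\\
 &\quad+(-1)^M\sum_{\ell=1}^{n-M-2}(-1)^\ell\,q_\ell\,\frac{\Gamma(n-\ell+a)}{\Gamma(n+a)}\,(S_\infty+\varepsilon_{n-M-\ell}),
\end{align*}
where the $\ell=0$ term (with $q_0=1$) has been separated off. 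The first line converges to $(-1)^M S_\infty$; hence, once we know that the $\ell\ge1$ sum tends to $0$, the limit exists and equals $(-1)^M S_\infty$, which is exactly what the statement of \thmref{main} requires, the overall sign $(-1)^M$ being an artefact of the shift by $M$ in \eqref{final}. Since $|S_\infty+\varepsilon_m|\le|S_\infty|+E$, the remaining task reduces to showing
\begin{align*}
 \sum_{\ell=1}^{n-M-2}|q_\ell|\,\frac{\Gamma(n-\ell+a)}{\Gamma(n+a)}\longrightarrow 0\qquad(n\to\infty).
\end{align*}

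For this last estimate I would split the $\ell$-range at $\lfloor n/2\rfloor$. On the whole range $1\le\ell\le n-M-2$ we have $n-\ell+a=(n-M-\ell)+a_M\ge a_M+2\ge 4$, so $\Gamma$ is increasing there and $\Gamma(n-\ell+a)/\Gamma(n+a)=\prod_{j=1}^{\ell}(n-j+a)^{-1}$. For $1\le\ell\le\lfloor n/2\rfloor$ each factor is at most $(n/2+a)^{-1}$, so this part of the sum is bounded by $C_0\sum_{\ell\ge1}(r_0(n/2+a))^{-\ell}$, a geometric series that tends to $0$ as $n\to\infty$. For $\lfloor n/2\rfloor<\ell\le n-M-2$, monotonicity of $\Gamma$ gives $\Gamma(n-\ell+a)\le\Gamma(\lceil n/2\rceil+a)$; there are fewer than $n$ such indices and $|q_\ell|\le C_0\max(1,r_0^{-1})^{\,n}$, so this part is at most $n\,C_0\max(1,r_0^{-1})^{\,n}\,\Gamma(\lceil n/2\rceil+a)/\Gamma(n+a)$, which tends to $0$ because Stirling's formula (see \appref{gamma}) makes the gamma quotient decay faster than any fixed exponential. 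I expect this ``large-$\ell$'' regime --- where the Taylor coefficients of $q$ are largest --- to be the only genuinely delicate point; it is precisely the analyticity of $q$, through the geometric bound $|q_j|\le C_0 r_0^{-j}$, that lets the factorial decay of the gamma quotient prevail. Combining the two regimes with the previous step completes the proof, the reduction from $a_M$ back to $a$ being exactly the cancellation $\Gamma((n-M)+a_M)=\Gamma(n+a)$ exploited above and in \lemmaref{this1}.
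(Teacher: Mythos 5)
Your proof is correct and follows essentially the same route as the paper: there too the argument is the Cauchy-product identity between the coefficients $q_\ell$ of $q$ and the $\widetilde\phi_m$, followed by showing that all cross terms vanish after division by the gamma factor; the only organisational difference is that the paper handles the multiplication by $q$ (splitting the convolution at a fixed large index $n_0$ and using Stirling's formula \eqref{stirling0}) and the shift by $x^M$ in two separate steps, whereas you do both at once and split the sum at $\lfloor n/2\rfloor$, which leads to the same geometric-series versus superexponential-decay dichotomy. One remark: your sign bookkeeping is in fact more careful than the paper's. The limit of $(-1)^n\widehat\phi_n/\Gamma(n+a)$ is indeed $(-1)^M S_\infty$, as your $\ell=0$ term shows (and as the example $q\equiv 1$, $\phi_1=\dots=\phi_{M+1}=0$, $\widetilde\phi_m=(-1)^m\Gamma(m+a_M)$ confirms), whereas the paper's final display, after substituting $n-M$ for the summation index, silently drops the factor $(-1)^M$; since $M=\max(2,2+\lceil -a\rceil)$ can be odd, this factor is genuinely present, but it is immaterial for \thmref{main}, which only asserts the existence of some limit $S_\infty\in\mathbb R$.
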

\begin{proof}
We first consider the formal series defined by the product
\begin{align*}
 \sum_{n=2}^\infty  \widetilde \phi^q_n x^n:=q(x) \sum_{n=2}^\infty \widetilde \phi_n x^n\in x^2 \mathbb C[[x]],
\end{align*}
and write 
\begin{align*}
 q(x) = \sum_{n=0}^\infty q_n x^n,
\end{align*}
with $$\vert q_n\vert \le T^{n}\quad \mbox{for all}\quad n\in \mathbb N_0.$$ Here $q_0=1$ since by assumption $q(0)=1$. By Cauchy's product formula, we therefore obtain that
\begin{align}
 \widetilde \phi^q_n = \widetilde \phi_n+\sum_{k=2}^{n-1} \widetilde \phi_k q_{n-k}.\eqlab{mq}
\end{align}
We now verify that 
\begin{align}
 \frac{(-1)^n \widetilde \phi^q_n }{\Gamma(n+a_M)}\rightarrow S_\infty\quad \mbox{for}\quad n\rightarrow \infty.\eqlab{step1}
\end{align}
For this purpose, we write $\widetilde \phi_n = \Gamma(n+a_M) p_n$ with $\vert p_n\vert \le C$ for all $n\ge 2$ for some $C>0$. It follows from \eqref{mq} that 
\begin{align*}
 \frac{(-1)^n \widetilde \phi^q_n }{\Gamma(n+a_M)} - \frac{(-1)^n \widetilde \phi_n }{\Gamma(n+a_M)} = (-1)^n \sum_{k=2}^{n-1} \frac{\Gamma(k+a_M)p_k}{\Gamma(n+a_M)}q_{n-k},
\end{align*}
and to obtain \eqref{step1}, we will in the following show that the right hand side goes to zero as $n\rightarrow \infty$. 

Fix $n_0\gg 1$. It is clear that 
\begin{align*}
 \sum_{k=2}^{n_0-1} \frac{\Gamma(k+a)p_k}{\Gamma(n+a_M )}q_{n-k}\rightarrow 0\quad \mbox{for}\quad n\rightarrow \infty.
\end{align*}
We therefore consider 
\begin{align*}
\sum_{k=n_0}^{n-1} \frac{\Gamma(k+a_M )p_k}{\Gamma(n+a_M)}q_{n-k}.
\end{align*}
and estimate 
\begin{align*}
\left|\sum_{k=n_0}^{n-1} \frac{\Gamma(k+a_M)p_k}{\Gamma(n+a_M)}q_{n-k}\right|\le C\sum_{k=n_0}^{n-1} \frac{\Gamma(k+a_M)}{\Gamma(n+a_M)}T^{n-k}.
\end{align*}
Given that $n_0\gg 1$, we can use \eqref{stirling0}. Notice in particular that 
\begin{align*}
 \Gamma(k+a_M) \le (1+o_{n_0\rightarrow \infty}(1)) \sqrt{2\pi(n+a_M-1)} \left(\frac{n+a_M-1}{e}\right)^{k+a_M-1}\quad \forall\, n_0\le k\le n.
\end{align*}
This leads to 
\begin{align*}
\left|\sum_{k=n_0}^{n-1} \frac{\Gamma(k+a_M)p_k}{\Gamma(n+a_M)}q_{n-k}\right| &\le (1+o_{n_0\rightarrow \infty}(1)) C \sum_{k=n_0}^{n-1} \left(\frac{eT}{n+a_M-1}\right)^{n-k}\\
&\le 2C \left(\frac{eT}{n+a_M-1}\right) \sum_{k=0}^\infty \left(\frac{eT}{n+a_M-1}\right)^k\\
& \le 4C \left(\frac{eT}{n+a_M-1}\right),
\end{align*}
for all $n\ge n_0+1$, provided that $n_0\gg 1$ is such that $o_{n_0\rightarrow \infty}(1)<1$ and $0<\frac{eT}{n_0+a_M-1}\le \frac12$. We conclude that 
\begin{align*}
 \sum_{k=2}^{n-1} \frac{\Gamma(k+a_M)p_k}{\Gamma(n+a_M)}q_{n-k} \rightarrow 0\quad \mbox{for}\quad n\rightarrow \infty,
\end{align*}
which shows \eqref{step1}. 

Finally, we notice that
\begin{align*}
 x^M \sum_{n=2}^\infty \widetilde \phi_n^q x^n = \sum_{n=M+2}^\infty \widetilde \phi^q_{n-M}x^n
\end{align*}
and therefore
\begin{align*}
 \frac{(-1)^n \widetilde \phi^q_{n-M} }{\Gamma(n+a)} = \frac{(-1)^n \widetilde \phi^q_{n-M}}{\Gamma(n-M+a_M)}\rightarrow S_\infty\quad \mbox{for}\quad n\rightarrow \infty
\end{align*}
This completes the proof, since the difference between $\sum_{n=1}^\infty\widehat \phi_n x^n$ and $x^M \sum_{n=2}^\infty \widetilde \phi_n^q x^n $ is a finite sum, $\sum_{n=1}^{M+1}\phi_n x^n$. 

\end{proof}

It follows that in order to prove \thmref{main} it suffices to consider \eqref{system0} with $a_M\ge 2$. For simplicity, we now drop the subscript on $a_M$ and the tilde on $y$ and therefore consider
\begin{align}
 x^2 \frac{dy}{dx} +(1+a\rsp{x}) y =x^2 f_0(x)+x^2 y^2 f_2(x,y),\quad a\ge 2.\eqlab{System0}
\end{align}
Moreover, we let $B_r(z)\subset \mathbb C$ denote the open disc of radius $r>0$ centered at $z$.
\begin{remark}\remlab{ref1}
 \rsp{The reference \cite{euler} assumes that $a>-2$. \lemmaref{this1} and \lemmaref{this2} shows that this is without loss of generality in the context of \thmref{main}. However, the main focus of \cite{euler} is on the unfolding of the saddle-node  and here the details of $a\le -2$ has not been worked out yet. }
\end{remark}

\section{A Borel-Laplace approach to center manifolds}\seclab{BL}

%

In this section, we \rsp{review} the Borel-Laplace approach of \cite{bonckaert2008a} in the context of center manifolds. 
For this purpose, we first recall that the Borel transform $\mathcal B$ is defined in the following way: If $\phi=\sum_{n=1}^\infty \phi_n x^n\in x\mathbb C[[x]]$ is a Gevrey-1 formal series: 
\begin{align}
\vert \phi_n\vert \le A T^n (n-1)!\quad \forall\, n\in \mathbb \mathbb N,\eqlab{gevreyhn}
\end{align}
recall \eqref{gevrey1},
then 
\begin{align*}
 \mathcal B(\phi)(w) = \sum_{n=1}^\infty \frac{\phi_{n}}{(n-1)!} w^{n-1}\in \mathbb C[[w]].
\end{align*}
By \eqref{gevreyhn}, $\Phi = \mathcal B(\phi)$ is analytic on $B_{T^{-1}}(0)$. It is elementary to show that $\phi$ is convergent if and only if  $\Phi$ is an entire function with at most exponential growth of order $1$:
\begin{align*}
 \vert \phi_n \vert \le A T^{n-1} \Leftrightarrow \vert \Phi(w)\vert \le A e^{T\vert w\vert}\quad \forall\,w\in \mathbb C,
\end{align*}
see e.g. \lemmaref{Fest} below for a similar statement.

For the Laplace transform, on the other hand, we need analytic functions that are at most exponentially growing (of order $1$) on a sector. 
For simplicity, we will restrict attention to sectors $S_\alpha\subset \mathbb C$ centered along the positive real axis:
\begin{align*}
 S_\alpha =\left\{w=re^{i\theta}\in \mathbb C\,:\, 0\le \theta< \frac{\alpha}{2}\right\},
\end{align*}
and define
\begin{align}
 \Omega(\alpha,R) := S_{\alpha}\cup B_R(0),\quad 0<R<T^{-1}.\eqlab{Omega}
\end{align}
Here $\alpha\in (0,\pi)$ denotes the opening of the sector, see \figref{omega}. (Extensions to general sectors centered along other directions, that do not include the negative real axis, are left out but straightforward.) Suppose then that $\Phi=\mathcal B(\phi)$ can be analytically extended to $\Omega$, so that $\Phi:\Omega \rightarrow \mathbb C$ is analytic, and suppose that $\vert \Phi(w)\vert \le C e^{P\vert w\vert}$, $C,P>0$, for all $w\in \Omega$.
Then the Laplace transform 
\begin{align*}
 \mathcal L_{\theta}(\Phi)(x) = \int_0^{e^{i\theta} \infty} e^{-x^{-1} w} \Phi(w)dw,
\end{align*}
of $\Phi$ in the direction $\theta\in (-\alpha,\alpha)$, is well-defined
for $\vert x\vert<P^{-1}\cos (\theta-\arg(x))$. In particular, by analytic continuation, $\mathcal L_{\theta}(\Phi)(x)$ defines an analytic function -- which we denote by $\mathcal L(\Phi)$ -- on the local sector 
\begin{align*}
 \omega (\alpha,r):= S_{\alpha+\pi}\cap B_r(0),
\end{align*}
where $r=r(\alpha)\in (0,P^{-1})$ is small enough, see \cite[Proposition 3]{bonckaert2008a}.
 $\mathcal L(\Phi):\omega\rightarrow \mathbb C$ is called the $1$-sum of the Gevrey-1 series $\phi\in x\mathbb C[[x]]$, \cite{balser1994a}. 
In this regard, it is important to notice that
\begin{align*}
 \mathcal L((\cdot)^{n-1})(x) = (n-1)! x^n\quad \forall\, n\in \mathbb N,
\end{align*}
and therefore, at the level of formal series, $\mathcal L:\mathbb C[[w]]\rightarrow x\mathbb C[[x]]$ is the inverse of the Borel transform $\mathcal B:x\mathbb C[[x]]\rightarrow \mathbb C[[w]]$. 

\begin{figure}[h!]
\begin{center}
\subfigure[]{\includegraphics[width=.4\textwidth]{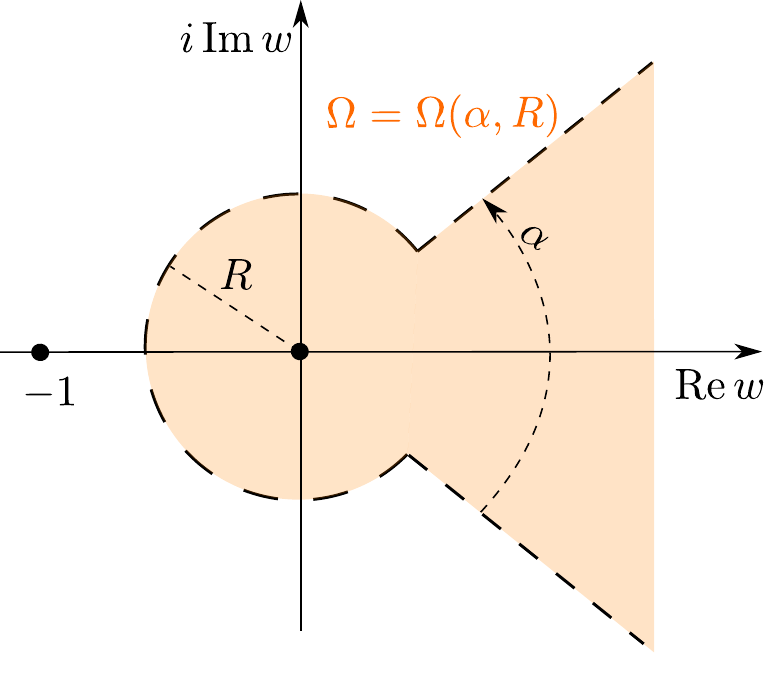}}
\subfigure[]{\includegraphics[width=.4\textwidth]{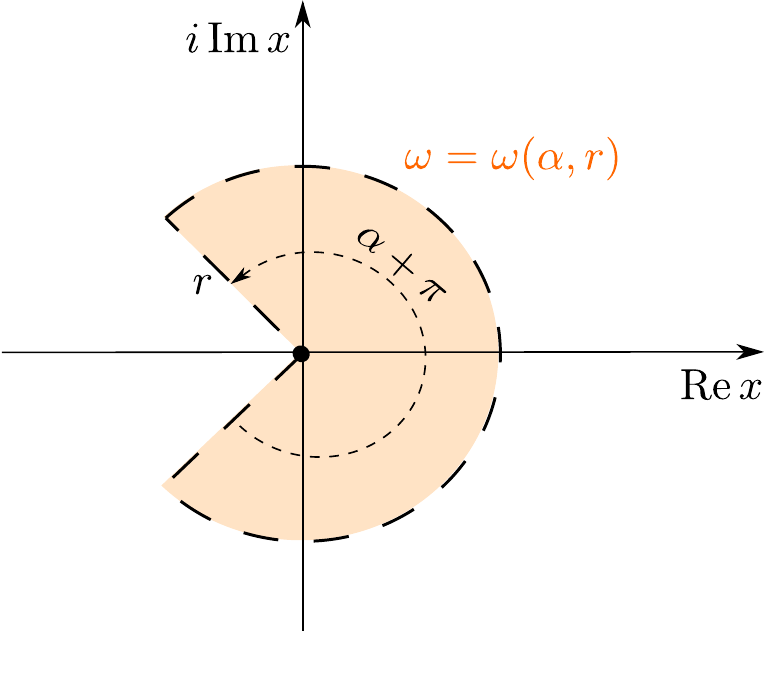}}
\end{center}
\caption{Illustrations of the different domains $\Omega(\alpha,R)$ and $\omega(\alpha,r)$. Fig. (a) is in the ``Borel plane'' $w\in \mathbb C$, whereas Fig. (b) is in the $x$-domain.}
\figlab{omega}
\end{figure}

The authors of \cite{bonckaert2008a} apply the Borel-Laplace approach to study (among other things) $1$-summable normal forms of saddle-nodes in $\mathbb C^{n+1}$ (with Poincar\'e rank $1$) and the existence of $1$-summable center manifolds (of complex dimension $1$) can be obtained as a corollary of these results (since $y=0$ is invariant for \cite[Eq. (8)]{bonckaert2008a}). In further details, the Borel-Laplace approach of \cite{bonckaert2008a} is centered around the following $\epsilon$-dependent norm
\begin{align}
\Vert Y\Vert_\epsilon:=\sup_{w\in \Omega} \left\{\vert Y(w)\vert e^{-\epsilon^{-1} \vert w\vert} (1+\epsilon^{-2} \vert w\vert^2)\right\},\eqlab{normG}
\end{align}
on the space
\begin{align*}
  \mathcal G_\epsilon:=\{Y:\Omega\rightarrow \mathbb C \mbox{ is analytic and } \Vert Y\Vert_\epsilon<\infty\}.
\end{align*}
The normed space $(\mathcal G_\epsilon,\Vert \cdot\Vert_\epsilon)$ is a complete space for each $\epsilon>0$, $\mathcal G_\epsilon\subset \mathcal G_\delta$ with $\Vert Y\Vert_\delta\le \Vert Y\Vert_\epsilon$ for all $0<\delta<\epsilon$, $Y\in \mathcal G_\epsilon$, and the factor $1+\epsilon^{-2} \vert w\vert^2$ in the norm $\Vert \cdot \Vert_\epsilon$ ensures that the convolution:
 \begin{align}
  (Y \star Z)(w) := \int_0^w Y(s) Z(w-s)ds=  w\int_0^1 Y(ws)Z(w(1-s))ds,\eqlab{convolutiondefn},
 \end{align}
 is continuous as a bilinear operator {$\mathcal G_\epsilon\times \mathcal G_\epsilon\rightarrow \mathcal G_\epsilon$} with the following estimate
\begin{align}
 \Vert Y\star Z \Vert_\epsilon \le 4\pi\epsilon \Vert Y\Vert_\epsilon \Vert Z\Vert_\epsilon\quad \forall\,Y,Z\in \mathcal G_\epsilon,\,\epsilon>0,\eqlab{convest}
\end{align}
 see \cite[Proposition 4]{bonckaert2008a}.

Moreover, the following result holds.
\begin{lemma}\cite[Proposition 3]{bonckaert2008a}\lemmalab{A1}
 The Laplace transform defines a linear continuous mapping, with operator norm $\Vert \mathcal L\Vert\le 1$, from $\mathcal G_\epsilon$ to the set of analytic functions on a local sector $\omega(\alpha,r)$ for $r>0$ sufficiently small. Moreover,  for every $Y,Z\in \mathcal G_\epsilon$ the following holds true:
 \begin{align}
  \mathcal L(Y \star Z)(x) &= \mathcal L(Y)(x)\mathcal L(Z)(x)\quad \forall\,x\in \omega(\alpha,r),\eqlab{convprop}
  \end{align}
  and
  \begin{align}
  x^2\frac{d}{dx}\mathcal L(Y)(x)&=\mathcal L(wY)(x)\quad \forall\,x\in \omega(\alpha,r),\eqlab{x2Lx}
 \end{align}
 with $wY$ being the function $w\mapsto wY(w)$.
\end{lemma}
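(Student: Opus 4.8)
The plan is to establish the four assertions of \lemmaref{A1} in turn: well-definedness and analyticity of $\mathcal L(Y)$ on a local sector, the operator bound $\Vert\mathcal L\Vert\le 1$, the convolution identity \eqref{convprop}, and the differential identity \eqref{x2Lx}.

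First I would fix $Y\in\mathcal G_\epsilon$ and use that the defining weight gives $\vert Y(w)\vert\le\Vert Y\Vert_\epsilon e^{\epsilon^{-1}\vert w\vert}(1+\epsilon^{-2}\vert w\vert^2)^{-1}$ for $w\in\Omega$. Along a ray $w=re^{i\theta}$ with admissible $\theta$ one has $\vert e^{-x^{-1}w}\vert=e^{-r\vert x\vert^{-1}\cos(\theta-\arg x)}$, so the integrand of $\mathcal L_\theta(Y)(x)$ is dominated by $\Vert Y\Vert_\epsilon e^{-r\lambda}(1+\epsilon^{-2}r^2)^{-1}$ with $\lambda=\vert x\vert^{-1}\cos(\theta-\arg x)-\epsilon^{-1}$. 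For $x\in\omega(\alpha,r)=S_{\alpha+\pi}\cap B_r(0)$ one can choose $\theta$ (depending on $\arg x$) so that $\cos(\theta-\arg x)$ is bounded below by a positive constant; hence for $r$ small enough $\lambda\ge 1>0$, the integral converges absolutely, and by Cauchy's theorem — closing the contour with an arc at infinity on which the integrand decays exponentially and using analyticity of $Y$ on $\Omega$ — its value is independent of the admissible $\theta$. Analyticity of $x\mapsto\mathcal L(Y)(x)$ then follows by differentiation under the integral sign, justified by uniform domination on compact subsets of $\omega(\alpha,r)$. The same estimate yields $\vert\mathcal L(Y)(x)\vert\le\Vert Y\Vert_\epsilon\int_0^\infty e^{-r\lambda}(1+\epsilon^{-2}r^2)^{-1}\,dr\le\Vert Y\Vert_\epsilon\int_0^\infty e^{-r\lambda}\,dr=\Vert Y\Vert_\epsilon\lambda^{-1}$, and shrinking $r$ so that $\lambda\ge 1$ on the relevant subsector gives the operator bound $\Vert\mathcal L\Vert\le 1$ with respect to the sup-norm on $\omega(\alpha,r)$.

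For the convolution identity \eqref{convprop}, I would write $\mathcal L(Y)(x)\mathcal L(Z)(x)$ as the iterated integral $\int_0^{e^{i\theta}\infty}\!\int_0^{e^{i\theta}\infty}e^{-x^{-1}(u+v)}Y(u)Z(v)\,du\,dv$, substitute $w=u+v$, $s=u$ (keeping $\arg w=\theta$), and apply Fubini, legitimate by the absolute convergence above together with the estimate \eqref{convest} ensuring $Y\star Z\in\mathcal G_\epsilon$; deforming the inner $s$-contour onto the straight segment $[0,w]$ by analyticity of $Y$ and $Z$ on $\Omega$ recovers exactly $\int_0^{e^{i\theta}\infty}e^{-x^{-1}w}(Y\star Z)(w)\,dw=\mathcal L(Y\star Z)(x)$. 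For the differential identity \eqref{x2Lx}, I would note $x^2\frac{d}{dx}e^{-x^{-1}w}=we^{-x^{-1}w}$, so differentiating under the integral sign gives $x^2\frac{d}{dx}\mathcal L(Y)(x)=\int_0^{e^{i\theta}\infty}we^{-x^{-1}w}Y(w)\,dw=\mathcal L(wY)(x)$; here one first checks $wY\in\mathcal G_\delta$ for every $\delta<\epsilon$, since the extra factor $\vert w\vert e^{-(\delta^{-1}-\epsilon^{-1})\vert w\vert}$ is bounded, so $\mathcal L(wY)$ is defined on a slightly smaller local sector, which suffices.

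The main obstacle I anticipate is making the contour manipulations in the convolution step fully rigorous: justifying the change of variables $(u,v)\mapsto(w,s)$ on unbounded rays, the use of Fubini, and the deformation of the inner path onto $[0,w]$ simultaneously, while keeping everything inside $\Omega$ where $Y$ and $Z$ are analytic and controlled by $\Vert\cdot\Vert_\epsilon$. A secondary technical point is selecting a single radius $r=r(\alpha)$ for which all four claims hold uniformly on $\omega(\alpha,r)$; this is handled by taking $r$ small enough that $\lambda\ge 1$ throughout the relevant subsector, as above.
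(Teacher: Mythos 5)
Your proposal is correct, but note that the paper itself gives no proof of this lemma: it is quoted verbatim from \cite[Proposition 3]{bonckaert2008a}, so there is no internal argument to compare against — what you have written is essentially the standard proof found in that reference. Your estimates are sound: the domination $\vert Y(w)\vert\le\Vert Y\Vert_\epsilon e^{\epsilon^{-1}\vert w\vert}(1+\epsilon^{-2}\vert w\vert^2)^{-1}$ plus the choice of $\theta$ with $\cos(\theta-\arg x)$ bounded below gives absolute convergence, $\theta$-independence by Cauchy, analyticity by differentiation under the integral, and $\Vert\mathcal L\Vert\le 1$ (in the sup-norm on $\omega(\alpha,r)$) once $r$ is shrunk so that $\lambda=\vert x\vert^{-1}\cos(\theta-\arg x)-\epsilon^{-1}\ge 1$; the fact that this admissible $r$ depends on $\epsilon$ is consistent with the statement ``for $r>0$ sufficiently small.'' Two small simplifications: in the convolution step, once both Laplace integrals are taken along the same ray $e^{i\theta}[0,\infty)$, the change of variables $(u,v)\mapsto(w,s)=(u+v,u)$ already places the inner integration on the segment $[0,w]$, so no contour deformation is actually needed — only Fubini, which follows from the absolute convergence you established (invoking \eqref{convest} is not required for this); and for \eqref{x2Lx} your observation that $wY\in\mathcal G_\delta$ for every $\delta<\epsilon$, at the cost of a slightly smaller sector, is exactly the right bookkeeping. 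No gaps.
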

In the following, {we  define $Y^{\star n}\in \mathcal G_\epsilon$ for all $Y\in \mathcal G_\epsilon$ and all $n\in \mathbb N$ recursively by $Y^{\star n}=Y \star Y^{\star (n-1)}$, \rsp{$n\ge 2$},  \rsp{and $Y^{\star 1}=1$}.}

 \begin{remark}
   The aforementioned results do not change in a significant way (only constants in estimates vary), if we were to replace the norm in \eqref{normG}
   by
   \begin{align*}
 \Vert Y\Vert_\epsilon=\sup_{w\in \Omega} \left\{ \vert Y(w)\vert e^{-\epsilon^{-1} \vert w\vert} (1+\epsilon^{-p} \vert w\vert^p)\right\},
 \end{align*}
 with $p>2$. This is not difficult to show; in fact, we will use a \rspp{similar} factor below that corresponds to $p=4$. 
 \end{remark}

\subsection{An equation in the Borel plane}
We consider \eqref{System0} and write $f_2$ as the convergent series:
\begin{align*}
 f_2(x,y) = \sum_{l=2}^\infty f_{2,l}(x) y^{l-2},
\end{align*}
where
\begin{align*}
 \sup_{x\in B_r(0)} \vert f_{2,l}(x)\vert \le AT^{l-1}\quad \forall\, l\ge 2,
\end{align*}
for some $A,T>0$. 
Let $F_0$ and $F_{2,l}$ denote the Borel-transforms $\mathcal B((\cdot)^2 f_0)$ and $\mathcal B((\cdot)^2 f_{2,l})$ of $x\mapsto x^{2} f_0(x)$ and $x\mapsto x^2 f_{2,l}(x)$, $l\ge 2$, respectively. These functions are entire functions of exponential growth. Moreover, $F_0(0)=F_{2,l}(0)=0$ for all $l\ge 2$. 

Then from \eqref{convprop} and \eqref{x2Lx}, one can set up the following equation for the Borel transform $\Phi=\Phi(w)$ of $y=\phi(x)$ satisfying \eqref{System0}:
\begin{align}\eqlab{Yeqn}
 (w+1)\Phi + a\star \Phi = F_0(w)+\sum_{l\ge 2} F_{2,l}(w) \star \Phi^{\star l}.
\end{align}
To be precise, we have the following from the discussion above: 
At the level of formal series, \eqref{System0} and \eqref{Yeqn} are equivalent, cf. $\mathcal L=\mathcal B^{-1}:\mathbb C[[w]]\rightarrow x\mathbb C[[x]]$. On the other hand, if $\Phi\in \mathcal G_\epsilon$ solves \eqref{Yeqn} then $\phi = \mathcal L(\Phi)$ solves \eqref{System0} on a local sectorial domain $\omega$, cf. \lemmaref{A1}.  

To solve \eqref{Yeqn} for $\Phi\in \mathcal G_\epsilon$, we can write it as a fix-point equation
\begin{align}
 \Phi(w) =  \frac{1}{w+1} \left( - a\star \Phi(w) +F_0(w)+\sum_{l\ge 2} F_{2,l}\star \Phi^{\star l}(w)\right).\eqlab{fp1}
\end{align}
This equation can easily be solved by Banach's fix-point theorem on a closed ball $0\le \Vert \Phi\Vert_\epsilon \le Q$ of $\mathcal G_\epsilon$, taking $Q>0$ large enough and $R$ in $\Omega(\alpha,R)$ small enough, see \eqref{Omega}, for all $0<\epsilon\ll 1$ upon using \eqref{convest}, see \cite{bonckaert2008a} and \cite[App. A]{uldall2024a} for an application of the result in the context of invariant manifolds only. 

\section{\rsp{An adaptation of the Borel-Laplace approach}}\seclab{adapt}

In this paper, we solve \eqref{Yeqn} on a different space (defined below), which provides further details of the singularity at $w=-1$. Whereas the form \eqref{fp1} (\rspp{used in \cite{bonckaert2008a}}) is based on inverting the first term in \eqref{Yeqn}, we will here need to invert the sum of the first two terms for $\Phi$.\footnote{It was Peter De Maesschalck that initially brought this idea to my attention while working on  a separate project.} More precisely, we consider the auxiliary equation
 \begin{align}
  (w+1)Y(w) +a\star Y(w) = H(w),\quad Y(0)=0,\eqlab{auxeqn}
 \end{align}
 and invert for $Y$. \rsp{The solution is described in \lemmaref{solYH} in \secref{new}. This leads to the definition of a new Banach space that controls the singularity of $Y$, see \eqref{normG2}. Subsequently, in \secref{prep} we state and prove some important lemmas on the behaviour of our norm. Here \lemmaref{convolution} and \lemmaref{YHest} on convolutions are particularly important. Then in \secref{solve} we write the equation \eqref{Yeqn} in a new fixed-point formulation (based upon \lemmaref{solYH} on inverting \eqref{auxeqn} for $Y$) that we solve using Banach's fixed point theorem and the estimates from \secref{prep}. This leads to an estimate of $\phi_n$ as $n\rightarrow \infty$ in \secref{init}. It is this estimate that forms the basis for the final proof of \thmref{main} in \secref{boot}.  }
\subsection{A new Banach space}\seclab{new}
We first describe the auxiliary equation \eqref{auxeqn}.
\begin{lemma}\lemmalab{solYH}
  Consider $a>1$ and any analytic function  $H$ with $H(0)=0$ defined on a domain $D\subset \mathbb C$ that is star-shaped at $0$. Then an analytic function
$Y:D\backslash (-\infty,-1]\rightarrow \mathbb C$ solves \eqref{auxeqn} 
 if and only if 
\begin{align}\eqlab{YH}
 Y(w) = \frac{H(w)}{1+w}-\frac{a}{(1+w)^{a+1}} \int_0^w H(s) (1+s)^{a-1} ds,\quad w\in D\backslash (-\infty,-1],
\end{align}
where the integration is taken along the line segment $[0,w]$.
\end{lemma}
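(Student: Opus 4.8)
The plan is to convert the functional equation \eqref{auxeqn} into a first-order linear ODE. Since $a$ is constant, $a\star Y(w)=a\int_0^w Y(s)\,ds$, so introducing the primitive $G(w):=\int_0^w Y(s)\,ds$ (along the segment $[0,w]$) turns \eqref{auxeqn} into
\begin{align*}
 (1+w)G'(w)+aG(w)=H(w),\qquad G(0)=0,\qquad Y=G'.
\end{align*}
The key preliminary observation is that $D\setminus(-\infty,-1]$ is again star-shaped at $0$: if $w\notin(-\infty,-1]$, then the segment $[0,w]$ meets the real axis only in $(-1,\infty)$ (or only at $0$), while $[0,w]\subset D$ because $D$ is star-shaped at $0$; hence $[0,w]\subset D\setminus(-\infty,-1]$. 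Consequently $G$ is single-valued and analytic on $D\setminus(-\infty,-1]$ with $G'=Y$, and all the contour integrals below are path independent. Throughout I fix the principal branch of $(1+w)^{\beta}$ on this slit domain.

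For the ``only if'' direction I would multiply the ODE by the integrating factor $(1+w)^{a-1}$ to get $\tfrac{d}{dw}\bigl[(1+w)^aG(w)\bigr]=(1+w)^{a-1}H(w)$, integrate along $[0,w]$ and use $G(0)=0$ to obtain $(1+w)^aG(w)=\int_0^w(1+s)^{a-1}H(s)\,ds$, i.e. $G(w)=(1+w)^{-a}\int_0^w(1+s)^{a-1}H(s)\,ds$; uniqueness of the solution of this linear ODE through $G(0)=0$ forces $G$ to be exactly this, so differentiating $Y=G'$ and simplifying gives precisely \eqref{YH}. For the ``if'' direction I would run this backwards: starting from $G(w):=(1+w)^{-a}\int_0^w(1+s)^{a-1}H(s)\,ds$, which is analytic on $D\setminus(-\infty,-1]$ and vanishes at $0$ (because $H(0)=0$ makes the integral vanish to second order there), a one-line differentiation shows $(1+w)G'(w)+aG(w)=H(w)$; then $Y:=G'$ equals the right-hand side of \eqref{YH}, $a\star Y(w)=aG(w)$, and hence $(w+1)Y(w)+a\star Y(w)=(1+w)G'(w)+aG(w)=H(w)$, while $Y(0)=0$ follows by inserting $w=0$ into \eqref{YH} and using $H(0)=0$.

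I do not expect a serious obstacle here; the only points needing care are bookkeeping ones: (i) the star-shapedness of $D\setminus(-\infty,-1]$, which makes $G$ well defined and path independent and gives uniqueness for the ODE; (ii) a consistent choice of the branches $(1+w)^{\pm a}$ and $(1+s)^{a-1}$ on the slit domain; and (iii) using $a>1$ together with $H(0)=0$ to ensure that $(1+s)^{a-1}H(s)$ stays bounded along $[0,w]$ (the segment never reaches $s=-1$) and that $Y$ extends analytically across $w=0$ with $Y(0)=0$. None of these requires more than the elementary estimates already used in the paper.
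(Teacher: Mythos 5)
Your argument is correct, and it reaches \eqref{YH} by a route that is dual to the paper's: the paper differentiates \eqref{auxeqn} once to obtain the first-order ODE $(w+1)Y'+(a+1)Y=H'$ for $Y$ itself, solves it with the integrating factor $(1+w)^{a+1}$ to get \eqref{this}, and then needs an integration by parts (this is where $H(0)=0$ and $a>1$ enter) to trade $H'$ for $H$; you instead integrate, i.e.\ pass to the primitive $G(w)=\int_0^w Y(s)\,ds$ so that $a\star Y=aG$, solve $(1+w)G'+aG=H$, $G(0)=0$, with the integrating factor $(1+w)^{a-1}$, and recover $Y=G'$ by one differentiation. What your version buys is that no derivative of $H$ ever appears and the integration-by-parts step disappears, so the hypotheses $a>1$ and $H(0)=0$ are only used for harmless bookkeeping (boundedness of $(1+s)^{a-1}H(s)$ on the segment and $Y(0)=0$), whereas in the paper they are what legitimizes the passage from \eqref{this} to \eqref{YH}; what the paper's version buys is that it works directly with the unknown $Y$ and its explicit solution formula \eqref{this}, which is also the representation that motivates the weight $\vert 1+w\vert^{a+1}$ in the norm \eqref{normG2}. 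Your preliminary observations — that $D\setminus(-\infty,-1]$ is again star-shaped at $0$ (so segment integration yields a single-valued analytic primitive and the convolution $a\star Y$ is exactly $aG$), that a fixed branch of $(1+w)^{\beta}$ is used on the slit domain, and that uniqueness for the linear initial value problem closes the ``only if'' direction — are precisely the points the paper leaves implicit, so no gap remains.
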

\begin{proof}
 By differentiating the equation for $Y$ with respect to $w$, we obtain 
 \begin{align*}
  (w+1)Y' + (a+1) Y  = H',\quad Y(0)=0,\,w\in D\backslash(-\infty,-1].
 \end{align*}
 (Obviously, integrating with respect to $w$ along $[0,w]$ allows us to go the other way).
The linear initial value problem has the unique solution:
\begin{align}
 Y(w) = \frac{1}{(1+w)^{a+1}} \int_0^w H'(s) (1+s)^a ds,\quad w\in D \backslash (-\infty,-1],\eqlab{this}
\end{align}
with the integration being along the line segment $[0,w]$ (avoiding the potential branch cut along $(-\infty,-1]$).
Given that $a>1$ and $H(0)=0$, we obtain the desired form  through integration by parts. 
\end{proof}

We now restrict attention to $R=1$ in \eqref{Omega} and put \begin{align}
\Omega:= \Omega(\alpha,1) = S_\alpha\cup B_1(0),\eqlab{Omega2}
\end{align}
see \figref{omega}(a). Notice that the singularity $w=-1$ is now at the boundary of $\Omega$.
Then based upon \lemmaref{solYH}, we are led to define the following (refined) norm
\begin{align}
\Wert Y\Wert_\epsilon: = \sup_{w\in \Omega}\left\{\vert Y(w) \vert e^{-\epsilon^{-1} \vert w\vert} (1+\epsilon^{-4} \vert w\vert^{4}) \vert w\vert^{-1} \vert 1+w\vert^{a+1}\right\},\eqlab{normG2}
\end{align}
on the following (modified) space
\begin{align*}
\mathcal H_\epsilon:=\{Y:\Omega\rightarrow \mathbb C \mbox{  is analytic},\, Y(0)=0 \mbox{ and } \Wert Y\Wert_\epsilon<\infty\}.
\end{align*}
Here $\epsilon>0$ will be small enough. 
This new normed vector space $(\mathcal H_\epsilon,\Wert\cdot\Wert_\epsilon)$ is also complete for each $\epsilon>0$.
Now, whereas, the motivation for the factor $\vert 1+w\vert^{a+1}$ comes from \eqref{this} and the singularity at $w=-1$, the reason for the factor $(1+\epsilon^{-4} \vert w\vert^{4}) \vert w\vert^{-1}$, which differ from the one used in the norm of \cite{bonckaert2008a}, recall \eqref{normG}, will hopefully be clear from the following (see \remref{this}). 

Notice that functions in $\mathcal H_\epsilon$ can still be Laplace transformed. (In principle, this is not important for us since our focus is on the formal series; in fact, $\Omega=B_1(0)$ would suffice for the present paper.)  In particular, functions in $Y \in \mathcal H_\epsilon$ restrict to $Y\vert_{\Omega(\alpha,R)}:\Omega(\alpha,R)\rightarrow \mathbb C$ with $0<R<1$ that are bounded in the norm \eqref{normG}. Moreover, for all $0<\delta<\epsilon$ we have 
$\mathcal H_{\epsilon}\subset \mathcal H_\delta$ with 
\begin{align}
 \Wert Y\Wert_{\delta}\le C \Wert Y\Wert_\epsilon\quad \forall\,Y\in \mathcal H_\epsilon,\eqlab{GepsGdelta}
\end{align}
where 
\begin{align*}
C =\sup_{p\ge 0,r>1} \frac{e^{-r p }(1+r^4 p^4)}{e^{-p}(1+p^4)}.
\end{align*}
Here we have introduced $r= \delta^{-1}\epsilon$ and $p=\epsilon^{-1}\vert w\vert$.
This should be clear enough. To obtain that $C<\infty$, we first point out that it suffices to consider $p$ in a compact set of $p\in [0,b]$, $b>0$. Indeed, by direct differentiation, we obtain that the quantity 
\begin{align}
\frac{e^{-r p }(1+r^4 p^4)}{e^{-p}(1+p^4)},\quad p\ge 0,\,r>1,\eqlab{quantity}
\end{align} 
is decreasing with respect to $r$ if
\begin{align}
(r p)^4 - 4 (rp)^3 + 1>0.
\end{align}
Given that $r>1$, we have 
\begin{align*}
 (r p)^4 - 4 (rp)^3 + 1\ge r^3 p^3(p-4)+1,
\end{align*}
for $p\ge  0$. It therefore suffices to take $b=4$. On $p\in [0,4]$, the denominator of \eqref{quantity} is uniformly bounded from below by a constant $\underline C>0$; numerically we find that $\underline C\approx 0.6148$. Then to complete the proof of the claim, we use that the numerator of \eqref{quantity}, which is a function of $rp$, has a maximum; numerically we find that this maximum occurs at $rp\approx 3.984$ with value $\overline C\approx 4.707$. In total, $C\le  \underline C^{-1} {\overline C}\approx 7.656$.

\begin{remark}\remlab{resonance}
 \eqref{YH} and \eqref{this} illustrate why we work with a normal form with $a>1$. Notice in this regard that if $a\in -\mathbb N$ then we obtain logarithmic singularities in general at $w=-1$ of the Borel transform $Y$. To see this, take $H(w) = (1+w)^{-a}$ in \rspp{\eqref{YH}}. Then 
 \begin{align*}
  Y(w) = - a(1+w)^{-a-1} \log (1+w).
 \end{align*}
 If $\rspp{\phi=\sum_{n=1}^\infty \phi_n x^n=\mathcal B^{-1}(Y)(x)}\in x\mathbb C[[x]]$ then
a simple calculation shows that
\begin{align}\eqlab{fuck}
\frac{(-1)^n \phi_n}{\Gamma(n+a)} &= -a \sum_{k=0}^{-a-1} \begin{pmatrix} -a-1\\ k\end{pmatrix} \frac
{(-1)^{k}}{n-1-k} \frac{(n-1)!}{\Gamma(n+a)}\quad \forall\,n\ge N,
\end{align}
which at first glance does not appear to be bounded (as claimed in \thmref{main}); $$\frac{(n-1)!}{\Gamma(n+a)} = (n-1)\cdots (n+a)\rightarrow \infty\quad\mbox{for}\quad n\rightarrow \infty\quad \forall\,a\in -\mathbb N.$$ However, there is a certain cancellation of terms due to
\begin{align*}
 \sum_{k=0}^p\begin{pmatrix} p\\ k\end{pmatrix}\frac{(-1)^k}{q-k} = \frac{(-1)^p p! (q-p-1)!}{q!}\quad \forall\,q>p,
\end{align*}
which can be proven by induction on $p$. Indeed, upon using this property (with $p=-a-1$ and $q=n-1$) in \eqref{fuck}, we obtain
\begin{align*}
\frac{(-1)^n \phi_n}{\Gamma(n+a)} &= {a(-1)^{-a}} (-a-1)! \frac{(n+a-1)!}{\Gamma(n+a)}= a(-1)^{-a} (-a-1)! \quad \forall\,n\ge N,
\end{align*}
so that the quantity $S_\infty$ in \thmref{main} becomes 
\begin{align*}
 S_\infty =a(-1)^{-a} (-a-1)!
\end{align*}
Clearly, the corresponding differential equation is given by \eqref{findpy} with $a\in -\mathbb N$ and
\begin{align*}
f(x)=\mathcal B^{-1} (H)(x) = \sum_{k=0}^{-a+1} \frac{(-a)!}{(-a+1-k)!}x^k,\quad H(w)=(1+w)^{-a}. 
\end{align*}
By applying the normal form step in \secref{nf}, so that $a>1$, we avoid issues with nonintegrability of \eqref{YH} as well as logarithmic singularities. 
\end{remark}


In the following, \textbf{we drop the subscripts on $\mathcal H_\epsilon$ and $\Wert \cdot\Wert_\epsilon$}. 

\subsection{Preparatory lemmas}\seclab{prep}
Consider $\delta>0$ small enough and define $$\Omega_-(\delta) := B_\delta(-1)\cap \Omega\,\,\, \text{and}\,\,\, \Omega_+(\delta) := \Omega\backslash\Omega_-(\delta).$$ We will later fix $\delta=\epsilon^{2/3}$ (see the proof of \lemmaref{Tdefn}) but leave it general for now. 

\begin{lemma}\lemmalab{lowerbound1}
 Suppose that $w\in \Omega_-(\delta)$, $0<\delta<1$. Then
\begin{align*}
 \vert 1+w(1-t)\vert \ge\frac{1}{\sqrt{5}}  (\vert 1+w\vert +t )\quad \forall\, t\in (0,1).
\end{align*}

\end{lemma}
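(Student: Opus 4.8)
The plan is to work directly with the quantity $|1+w(1-t)|$ for $w \in \Omega_-(\delta) = B_\delta(-1) \cap \Omega$ and $t \in (0,1)$. Write $u := 1+w$, so $|u| < \delta < 1$, and note $1 + w(1-t) = 1 + (u-1)(1-t) = t + u(1-t)$. Thus the claim becomes
\begin{align*}
 |t + u(1-t)| \ge \tfrac{1}{\sqrt 5}\bigl(|u| + t\bigr)\quad \text{for all } t \in (0,1),\ |u| < 1.
\end{align*}
Since $t + u(1-t)$ has real part at least $t - |u|(1-t)$, the estimate is immediate and sharp-free when $|u| \le t$; the only delicate regime is $|u| > t$, where the real part can be small or negative. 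First I would isolate this regime and parametrize $u = \rho e^{i\psi}$ with $\rho = |u| < 1$.

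**The key estimate.** In the hard regime $t < \rho < 1$, I would bound $|t + u(1-t)|^2 = (t + \rho(1-t)\cos\psi)^2 + \rho^2(1-t)^2\sin^2\psi$ from below and compare it to $\tfrac15(\rho + t)^2$. Expanding,
\begin{align*}
 |t + u(1-t)|^2 = t^2 + 2t\rho(1-t)\cos\psi + \rho^2(1-t)^2,
\end{align*}
and the worst case is $\cos\psi = -1$ (i.e. $u < 0$ real), giving $|t - \rho(1-t)|^2 = (t - \rho(1-t))^2$. So it suffices to show $(t - \rho(1-t))^2 \ge \tfrac15(\rho+t)^2$ whenever $0 < t < \rho < 1$. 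Setting $s = \rho/t > 1$ and writing $g(t,s)$ for the ratio of the two sides, I expect the minimum over the relevant range to be attained on the boundary $\rho \to 1$ (or at a critical point in $t$), and a one-variable calculus check should pin down that the ratio never drops below $1/5$. Actually I suspect the constant $1/\sqrt5$ is chosen precisely so that equality is approached as $\rho \to 1$, $t \to$ some value; for instance at $\rho = 1$, $t + u(1-t) = t + (1-t)\cos\psi + i(1-t)\sin\psi$ has modulus squared $t^2 + 2t(1-t)\cos\psi + (1-t)^2$, minimized at $\cos\psi = -1$ to $(2t-1)^2$, and $(2t-1)^2 \ge \tfrac15(1+t)^2$ fails near $t = 1/2$ — so the constraint $\rho < \delta < 1$ with $\delta$ small is genuinely doing the work, and I would need to track the $\delta$-dependence, or else recognize that the lemma as stated only needs SOME universal constant and $1/\sqrt5$ works for $\delta$ small enough. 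I would re-examine whether the intended reading is "there exists such a bound with the explicit constant $1/\sqrt5$ uniformly in $0<\delta<1$" — checking $\rho \to 1^-$ carefully is the crux.

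**Main obstacle.** The principal difficulty is confirming that the stated constant $1/\sqrt5$ is correct and uniform over $0 < \delta < 1$ rather than requiring $\delta$ small; this forces a careful two-variable optimization of
\begin{align*}
 \frac{t^2 + 2t\rho(1-t)\cos\psi + \rho^2(1-t)^2}{\tfrac15(\rho+t)^2}
\end{align*}
over $t \in (0,1)$, $\rho \in (0,1)$, $\psi \in [0,2\pi)$. I would reduce to $\cos\psi = -1$, then substitute $\tau = 1-t \in (0,1)$ and treat $F(\tau,\rho) = (\,(1-\tau) - \rho\tau\,)^2 - \tfrac15(\rho + 1-\tau)^2$, showing $F \ge 0$ on the square. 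Vanishing of $\partial_\tau F$ and $\partial_\rho F$ simultaneously should have no interior solution with $F < 0$, and the boundary cases $\tau \in \{0,1\}$, $\rho \in \{0,1\}$ reduce to elementary one-variable inequalities (e.g. on $\rho = 1$: $(1-2\tau)^2 \ge \tfrac15(2-\tau)^2$, i.e. $5(1-2\tau)^2 - (2-\tau)^2 = 19\tau^2 - 16\tau + 1 \ge 0$, which does fail for $\tau$ between the roots $(8 \pm \sqrt{45})/19$). This tells me the lemma's proof must genuinely use $|1+w| < \delta$ with a specific smallness on $\delta$, so I would either (a) reinterpret the hypothesis "$\delta$ small enough" as including the requirement needed here and supply the explicit threshold, or (b) replace $\tfrac{1}{\sqrt5}$ by the correct constant valid for all $\delta < 1$. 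I expect the author intends (a) — with $\delta \le$ some explicit value like $1/2$ or smaller — and I would state that threshold explicitly and then the remaining estimate is a short computation with $\rho < \delta$ bounded away from $1$.
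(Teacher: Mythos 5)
There is a genuine gap: you have dropped the constraint $w\in\Omega$ and kept only $|1+w|<\delta$. The set $\Omega_-(\delta)=B_\delta(-1)\cap\Omega$ with $\delta<1$ is disjoint from the sector $S_\alpha$ (whose points have argument of modulus less than $\alpha/2<\pi/2$), so in fact $\Omega_-(\delta)=B_\delta(-1)\cap B_1(0)$. Writing $1+w=re^{i\theta}$, the condition $|w|<1$ reads $r^2<2r\cos\theta$, hence $\cos\theta>0$, i.e.\ $\theta\in(-\pi/2,\pi/2)$. Your ``worst case $\cos\psi=-1$'' (and indeed any $\cos\psi\le 0$) is therefore never attained, and it is exactly this positivity of the cross term that the paper's proof uses: from
\begin{align*}
|1+w(1-t)|^2=r^2(1-t)^2+2rt(1-t)\cos\theta+t^2\ \ge\ r^2(1-t)^2+t^2,
\end{align*}
one only needs the elementary inequality $r^2(1-t)^2+t^2\ge\tfrac15(r+t)^2$ for $r\in(0,1)$, $t\in[0,1]$, which follows from a one-variable minimization (the minimum of the difference is at $t=\tfrac{r(1+5r)}{4+5r^2}$ with value $\tfrac{r^2(3+r)(1-r)}{4+5r^2}>0$, using only $r<\delta<1$). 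So the lemma is correct as stated, uniformly in $0<\delta<1$, with the constant $1/\sqrt5$.

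Your proposed remedies would not repair your version of the problem either: if $u=1+w$ were allowed to be negative real, then choosing $t=|u|/(1+|u|)$ makes $t+u(1-t)=0$ while $|u|+t>0$, so the inequality would fail for arbitrarily small $\delta$; shrinking $\delta$ or changing the constant cannot help. The missing idea is purely geometric — $B_\delta(-1)\cap B_1(0)$ lies in the half-plane $\operatorname{Re}(1+w)>0$ — and once it is in place your reduction $1+w(1-t)=t+u(1-t)$ and the remaining optimization coincide with the paper's argument. (Minor additional point: your claim that the case $|u|\le t$ is ``immediate'' from the real part alone is also not quite right, since $t-|u|(1-t)$ can be smaller than $\tfrac1{\sqrt5}(|u|+t)$ for small $t$; the uniform bound really does come from the quadratic estimate above.)
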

 \begin{proof}
  We put $w=-1+re^{i\theta}$, so that $r=\vert 1+w\vert$ and
  \begin{align}
   \vert 1+w(1-t)\vert^2 =r^2 (1-t)^2 +2 r t (1-t)\cos \theta+t^2.\eqlab{comp}
  \end{align}
  Since $w\in \Omega_-(\delta)$, it follows that $\theta\in(-\pi/2,\pi/2)$ and $r\in (0,\delta)$. Therefore
\begin{align*}
 \vert 1+w(1-t)\vert^2 \ge r^2 (1-t)^2 +t^2\ge \frac15 (r+t)^2\quad \forall\, t\in [0,1],\,r\in (0,\delta).
\end{align*}
To see this last estimate, we notice that the function $t\mapsto r^2 (1-t)^2 +t^2- \frac15 (r+t)^2$ has a minimum
at $t=\frac{r(1+5r)}{4+5r^2}\in (0,1)$ with positive value
\begin{align*}
 \frac{r^2(3+r)(1-r)}{4+5r^2}>0\quad \forall\,r\in (0,\delta),
\end{align*}
using here that $0<\delta<1$.
 \end{proof}
 \begin{lemma}\lemmalab{lowerbound2}
  Define
  \begin{align*}
   I(w):=\int_0^1 \frac{t^n}{\vert 1+w(1-t)\vert^{a+1}}dt,
  \end{align*}
where $w\in \Omega_-(\delta)$, $n\in \mathbb N_0$ and $n< a$. Then 
\begin{align*}
 I(w) \le 
           C_{a,n}\vert 1+w\vert^{n-a},
\end{align*}
where
\begin{align*}
 C_{a,n} = \frac{5^{\frac{a+1}{2}} \Gamma(a-n)\Gamma(n+1)}{\Gamma(a+1)}.
\end{align*}

 \end{lemma}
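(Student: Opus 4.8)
The plan is to bound $I(w)$ by combining the lower bound from \lemmaref{lowerbound1} with an explicit Beta-function integral. First I would apply \lemmaref{lowerbound1}, which gives $\vert 1+w(1-t)\vert \ge \frac{1}{\sqrt5}(\vert 1+w\vert + t)$ for all $t\in(0,1)$ whenever $w\in\Omega_-(\delta)$ with $0<\delta<1$. Raising to the power $a+1>0$ and inverting yields
\begin{align*}
 I(w) \le 5^{\frac{a+1}{2}} \int_0^1 \frac{t^n}{(\vert 1+w\vert + t)^{a+1}}\,dt.
\end{align*}
Writing $r=\vert 1+w\vert\in(0,\delta)$, it then remains to estimate $J(r):=\int_0^1 t^n (r+t)^{-(a+1)}\,dt$.

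Next I would bound $J(r)$ by extending the integration to $t\in(0,\infty)$ (legitimate since the integrand is nonnegative and, because $n<a$, i.e. $a+1-n>1$, the resulting improper integral converges at $\infty$). The substitution $t=rs$ gives
\begin{align*}
 \int_0^\infty \frac{t^n}{(r+t)^{a+1}}\,dt = r^{n-a}\int_0^\infty \frac{s^n}{(1+s)^{a+1}}\,ds = r^{n-a}\, B(n+1,a-n),
\end{align*}
using the standard Beta-integral identity $\int_0^\infty s^{x-1}(1+s)^{-(x+y)}\,ds = B(x,y) = \frac{\Gamma(x)\Gamma(y)}{\Gamma(x+y)}$ with $x=n+1$, $y=a-n$ (both positive by hypothesis). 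Substituting $r=\vert 1+w\vert$ and assembling the constants produces exactly
\begin{align*}
 I(w) \le 5^{\frac{a+1}{2}}\,\frac{\Gamma(n+1)\Gamma(a-n)}{\Gamma(a+1)}\,\vert 1+w\vert^{n-a} = C_{a,n}\,\vert 1+w\vert^{n-a},
\end{align*}
as claimed.

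I do not expect any serious obstacle here; the only points requiring minor care are checking that the hypotheses $n\in\mathbb N_0$ and $n<a$ guarantee convergence of the extended integral at infinity (they do, since $a+1-n>1$) and near $t=0$ (trivial, since $t^n$ is bounded and $r>0$), and verifying that the parameters fed into the Beta function lie in its domain of convergence (both $n+1>0$ and $a-n>0$ hold). The estimate is then a clean two-line chain: \lemmaref{lowerbound1}, monotonicity of $(\cdot)^{a+1}$, enlargement of the domain of integration, and the Beta-integral evaluation.
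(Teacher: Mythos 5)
Your proposal is correct and follows essentially the same route as the paper: apply \lemmaref{lowerbound1}, rescale by $\vert 1+w\vert$, enlarge the integration range to $(0,\infty)$, and evaluate via the Euler/Beta integral \eqref{integral} with $x=a-n$, $y=n+1$. The only difference is the order in which you extend the domain and substitute, which is immaterial.
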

\begin{proof}
 We use \lemmaref{lowerbound1} to estimate the integral:
\begin{align*}
 I(w) \le 5^{\frac{a+1}{2}} \vert 1+w\vert^{n-a} \int_0^\infty \frac{s^n}{(1+s)^{a+1}}ds.
\end{align*}
Here we have used the substitution $t=\vert 1+w\vert s$ and estimated $\int_0^{\vert 1+w\vert^{-1}}(\cdots) ds\le \int_0^{\infty}(\cdots) ds$. 
The result then follows from 
\begin{align*}
 \int_0^\infty \frac{s^n}{(1+s)^{a+1}}ds = \frac{\Gamma(a-n)\Gamma(n+1)}{\Gamma(a+1)},
\end{align*}
cf. \eqref{integral} ($x=a-n,y=1+n$).

\end{proof}

\begin{lemma}\lemmalab{convolution}
 Suppose that $Y,Z\in \mathcal H$. Then
 \begin{align*}
  Y\star Z \in \mathcal H,
 \end{align*}
and there is a constant $\mathcal C>0$, that only depends upon $\alpha$ (recall \eqref{Omega2}) and $a>1$, such that
\begin{align}\eqlab{YZest1}
 \vert (Y\star Z)(w)\vert\le \mathcal C\Wert Y\Wert \Wert Z\Wert e^{\epsilon^{-1} \vert w\vert} (1+\epsilon^{-4}\vert w\vert^4)^{-1} \vert w\vert  \times &\begin{cases}
                 \epsilon^2 \vert 1+w\vert^{-a-1}\quad \forall\, w\in \Omega_+(\delta)\\
                 \vert 1+w\vert^{1-a} \quad \forall\,w\in \Omega_-(\delta),                                                                                                                                      \end{cases}
\end{align}
\rspp{for all $\epsilon>0$, $\delta>0$}.
In particular, 
\begin{align}\eqlab{YZest2}
 \Wert Y\star Z \Wert \le \mathcal C \epsilon^2 \Wert Y\Wert \Wert Z\Wert.
\end{align}
\end{lemma}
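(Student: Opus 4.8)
The plan is to estimate $(Y\star Z)(w)=w\int_0^1 Y(ws)Z(w(1-s))\,ds$ directly, using the defining norm $\Wert\cdot\Wert$ to bound $|Y(ws)|$ and $|Z(w(1-s))|$ and then integrating in $s$. First I would unwrap the norm: from $\Wert Y\Wert<\infty$ we get, for any $\zeta\in\Omega$, $|Y(\zeta)|\le \Wert Y\Wert\, e^{\epsilon^{-1}|\zeta|}(1+\epsilon^{-4}|\zeta|^4)^{-1}|\zeta|\,|1+\zeta|^{-a-1}$, and similarly for $Z$. Substituting $\zeta=ws$ and $\zeta=w(1-s)$ and using $|ws|+|w(1-s)|=|w|$ (since $s\in[0,1]$, $w$ fixed), the two exponential factors multiply to $e^{\epsilon^{-1}|w|}$, which is exactly the target growth. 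The polynomial factors give $|ws|\,|w(1-s)| = |w|^2 s(1-s)$, and together with the leading $|w|$ from the convolution and the two weight denominators $(1+\epsilon^{-4}|ws|^4)^{-1}(1+\epsilon^{-4}|w(1-s)|^4)^{-1}$, one is left with an integral of the form
\begin{align*}
 |w|^3 \int_0^1 \frac{s(1-s)}{(1+\epsilon^{-4}|w|^4 s^4)(1+\epsilon^{-4}|w|^4(1-s)^4)}\,\frac{ds}{|1+ws|^{a+1}\,|1+w(1-s)|^{a+1}}.
\end{align*}

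The core of the argument is to bound this integral, and here the split into $\Omega_+(\delta)$ and $\Omega_-(\delta)$ enters. On $\Omega_+(\delta)$, the point $w$ is bounded away from $-1$, so along the segments $ws$ and $w(1-s)$ one has $|1+ws|\gtrsim 1$ and $|1+w(1-s)|\gtrsim \delta$ uniformly — more carefully, one shows $|1+w\sigma|$ is bounded below by a constant depending only on $\alpha$ for $\sigma\in[0,1]$ and $w\in\Omega$, except near $\sigma=1$ when $w$ is near $-1$, which is excluded on $\Omega_+$; so the two $|1+\cdot|^{-a-1}$ factors contribute at most a constant, and one still has the full factor $|1+w|^{-a-1}$ available to pull out from, say, $|1+w(1-s)|^{-a-1}$ near $s=0$ combined with the part near $s=1$. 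The remaining $|w|^3\int_0^1 s(1-s)[(1+\epsilon^{-4}|w|^4s^4)(1+\epsilon^{-4}|w|^4(1-s)^4)]^{-1}ds$ is then bounded by $\mathcal C\epsilon^2|w|(1+\epsilon^{-4}|w|^4)^{-1}$: this is the standard computation (the purpose of the $p=4$ weight, cf.\ the remark after Lemma \ref{lemma:A1}) — split $[0,1]$ into $[0,1/2]$ and $[1/2,1]$, on each half one denominator is $\gtrsim 1+\epsilon^{-4}|w|^4$ and can be pulled out, and $\int_0^1 s/(1+\epsilon^{-4}|w|^4 s^4)\,ds \le \int_0^\infty \cdots = c\,\epsilon^2|w|^{-2}$ by rescaling, leaving an extra $|w|^2$ to cancel. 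On $\Omega_-(\delta)$, the new feature is that $|1+w(1-s)|$ genuinely degenerates as $s\to 0$ (there $w(1-s)\to w$, which is near $-1$); this is exactly what Lemmas \ref{lemma:lowerbound1} and \ref{lemma:lowerbound2} are designed for: $|1+w(1-s)|\ge \tfrac{1}{\sqrt5}(|1+w|+s)$, so $\int_0^1 s^n |1+w(1-s)|^{-a-1}ds \le C_{a,n}|1+w|^{n-a}$, which replaces the "constant times $|1+w|^{-a-1}$" bound by the weaker "constant times $|1+w|^{1-a}$" stated in \eqref{eq:YZest1} (the $n=1$ case, since the numerator carries one power of $s$); meanwhile the factor coming from $|1+ws|^{-a-1}$ near $s=1$ is harmless because $ws$ stays away from $-1$ when $w$ is near $-1$ (as $s\le 1$ and $|w|\approx 1$), and the $\epsilon$-weight integral is handled as before to produce $\epsilon^2|w|(1+\epsilon^{-4}|w|^4)^{-1}$, except that on $\Omega_-$ one does not insist on the extra $\epsilon^2$ — there $|w|\approx 1$ and $\epsilon^{-4}|w|^4$ is large, so $(1+\epsilon^{-4}|w|^4)^{-1}\le \epsilon^4$ already absorbs everything; I would simply bound things crudely there.

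Finally, \eqref{eq:YZest2} follows by multiplying \eqref{eq:YZest1} through by the norm weight $e^{-\epsilon^{-1}|w|}(1+\epsilon^{-4}|w|^4)|w|^{-1}|1+w|^{a+1}$ and taking the supremum over $w\in\Omega$: on $\Omega_+(\delta)$ the right-hand side becomes exactly $\mathcal C\epsilon^2\Wert Y\Wert\Wert Z\Wert$, while on $\Omega_-(\delta)$ it becomes $\mathcal C\,|1+w|^{2}\Wert Y\Wert\Wert Z\Wert$, which is $\le \mathcal C\delta^2\Wert Y\Wert\Wert Z\Wert$ and hence $\le \mathcal C\epsilon^2\Wert Y\Wert\Wert Z\Wert$ once $\delta$ is eventually chosen as $\epsilon^{2/3}$ (or, being wasteful, just absorb it since $\delta<1$ and rescale the constant — but the clean statement wants $\epsilon^2$, so I would note the $\delta\le\epsilon^{2/3}$ choice propagated from the eventual application, or alternatively keep $\delta$ free and observe $|1+w|^2\le\delta^2$). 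Also I must separately check $Y\star Z$ is analytic on $\Omega$ and vanishes at $w=0$, which is immediate from the integral representation and $\Omega$ being star-shaped at $0$. The main obstacle is the bookkeeping on $\Omega_-(\delta)$: correctly tracking which of the two $|1+\cdot|^{-a-1}$ factors degenerates, extracting the right power of $|1+w|$, and simultaneously keeping enough of the $\epsilon$-weight — this is where Lemmas \ref{lemma:lowerbound1}–\ref{lemma:lowerbound2} must be invoked with the correct value of $n$ and where an off-by-one in the power of $|1+w|$ would break \eqref{eq:YZest1}.
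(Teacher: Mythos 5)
Your overall route is the paper's own: unwrap the norm under the convolution integral, split into $\Omega_\pm(\delta)$, use \lemmaref{lowerbound1}--\lemmaref{lowerbound2} with $n=1$ near $w=-1$, and rescale the $p=4$ weight integral to extract the factor $\epsilon^2$ away from $-1$. However, two steps fail as written. First, on $\Omega_-(\delta)$ you discard the factor $\vert 1+ws\vert^{-a-1}$ near $s=1$ on the grounds that ``$ws$ stays away from $-1$ when $w$ is near $-1$''; this is false: at $s=1$ one has $ws=w$, which is exactly the point near $-1$, so $\vert 1+ws\vert$ degenerates at $s=1$ in the same way that $\vert 1+w(1-s)\vert$ degenerates at $s=0$. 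The repair is the symmetry the paper invokes at the outset: the normed integrand bound is symmetric under $s\mapsto 1-s$, so it suffices to treat $s\in[\tfrac12,1]$ (equivalently $t=1-s\in[0,\tfrac12]$), where $\vert 1+w(1-s)\vert\ge\tfrac12$ by \eqref{simpleest}, and where the vanishing numerator factor $1-s$ is paired with the degenerating $\vert 1+ws\vert^{-a-1}$ so that \lemmaref{lowerbound2} with $n=1$ applies to this half as well; both halves then contribute $\vert 1+w\vert^{1-a}$ and the stated $\Omega_-$ bound survives. As literally written, though, your argument leaves the portion of the integral near $s=1$ uncontrolled on $\Omega_-(\delta)$. (A smaller omission of the same geometric kind on $\Omega_+$: ``pulling out'' $\vert 1+w\vert^{-a-1}$ needs $\vert 1+w(1-s)\vert\ge\tfrac12\vert 1+w\vert$ for $s\le\tfrac12$, which the paper derives from \eqref{comp} using that $\cos\theta\ge 0$ for all $w\in\Omega$; you assert it without proof, but it is true.)

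Second, your derivation of \eqref{YZest2} does not yield $\epsilon^2$. On $\Omega_-(\delta)$ the weighted bound is $\mathcal C\vert 1+w\vert^2\le\mathcal C\delta^2$, and with your proposed $\delta=\epsilon^{2/3}$ this is $\mathcal C\epsilon^{4/3}$, which is \emph{larger} than $\epsilon^2$ for small $\epsilon$ (and ``absorbing it since $\delta<1$'' loses the $\epsilon$-gain entirely). The point of stating \eqref{YZest1} ``for all $\epsilon>0$, $\delta>0$'' is precisely that $\delta$ is free here and need not be the $\delta=\epsilon^{2/3}$ used later in \lemmaref{YHest}/\lemmaref{Tdefn}: since $-1\notin\Omega$, for each fixed $w\in\Omega$ one may let $\delta\rightarrow 0$ (or take $\delta<\vert 1+w\vert$), so that $w\in\Omega_+(\delta)$ and only the branch $\epsilon^2\vert 1+w\vert^{-a-1}$ is relevant; multiplying by the weight $e^{-\epsilon^{-1}\vert w\vert}(1+\epsilon^{-4}\vert w\vert^4)\vert w\vert^{-1}\vert 1+w\vert^{a+1}$ and taking the supremum then gives exactly $\Wert Y\star Z\Wert\le\mathcal C\epsilon^2\Wert Y\Wert\Wert Z\Wert$. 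Your alternative ``keep $\delta$ free'' gestures at this but stops short; as proposed it proves only $\Wert Y\star Z\Wert\le\mathcal C\epsilon^{4/3}\Wert Y\Wert\Wert Z\Wert$, which is weaker than the stated estimate.
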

\begin{proof}
 We compute:
 \begin{align*}
  \vert (Y\star Z)(w)\vert&\le 2\Wert Y\Wert \Wert Z\Wert  e^{\epsilon^{-1} \vert w\vert} \vert w\vert \int_0^{\frac12} \frac{\vert w\vert(1-t)}{\vert 1+wt\vert^{a+1} }\frac{1}{1+\epsilon^{-4}\vert w\vert^4 (1-t)^4}  \\
  &\times \frac{\vert w\vert t}{ \vert 1+w(1-t)\vert^{a+1}} \frac{1}{1+\epsilon^{-4}\vert w\vert^4 t^4}dt,
 \end{align*}
 using \eqref{convolutiondefn} and symmetry of the integral around $s=\frac12$.
Clearly, since $\vert 1+z\vert\ge \frac12$ for all $z\in B_{\frac12}(0)$ we have that 
\begin{align}
 \vert 1+wt\vert \ge \tfrac12\quad \forall\,t\in [0,\tfrac12],\,w\in \Omega.\eqlab{simpleest}
\end{align}
Moreover, 
\begin{align*}
 {1+\epsilon^{-4}\vert w\vert^4 (1-t)^4}>\frac{1}{2^4} (1+\epsilon^{-4}\vert w\vert^4)\quad \forall\,t\in [0,\tfrac12],\,w\in \Omega.
\end{align*}
Hence we have that
\begin{align}
\vert (Y\star Z)(w)\vert&\le  2^{a+6}\Wert Y\Wert \Wert Z\Wert e^{\epsilon^{-1} \vert w\vert}  (1+\epsilon^{-4}\vert w\vert^4)^{-1}   \vert w\vert^2 \int_0^{\frac12} \frac{\vert w\vert t}{ \vert 1+w(1-t)\vert^{a+1}} \frac{1}{1+\epsilon^{-4}\vert w\vert^4 t^4}dt.\eqlab{est0conv}
\end{align}
We now divide the analysis into $w\in \Omega_-(\delta)$ and $w\in \Omega_+(\delta)$. Consider first $w\in \Omega_-(\delta)$. Then $\vert w\vert \le 1$ and by  \lemmaref{lowerbound2} (with $n=1$ and $a>1$) we have that 
\begin{align}
\vert (Y\star Z)(w)\vert&\le  2^{a+6} Y\Wert \Wert Z\Wert e^{\epsilon^{-1} \vert w\vert} (1+\epsilon^{-4}\vert w\vert^4)^{-1}  \vert w\vert \int_0^{\frac12} \frac{t}{ \vert 1+w(1-t)\vert^{a+1}} dt\nonumber \\
&\le  \overline C_{a}  Y\Wert \Wert Z\Wert  e^{\epsilon^{-1} \vert w\vert} (1+\epsilon^{-4}\vert w\vert^4)^{-1} \vert w\vert \vert 1+w\vert^{1-a},\eqlab{est1conv}
\end{align}
setting $\overline C_a:=C_{a,1} 2^{a+6}$. 
Next, for $w\in \Omega_+(\delta)$, we use that $\vert 1+w(1-t)\vert\ge \frac12 \vert 1+w\vert$ for all $t\in [0,1/2]$. This follows \rspp{from} \eqref{comp} with $r=\vert 1+w\vert$: 
$$\vert 1+w(1-t)\vert^2 \ge \vert 1+w\vert^2(1-t)^2\ge \frac14 \vert 1+w\vert^2.$$ Therefore we have that 
\begin{align}
\vert (Y\star Z)(w)\vert&\le  2^{2a+7} \Wert Y\Wert \Wert Z\Wert  e^{\epsilon^{-1} \vert w\vert}  (1+\epsilon^{-4}\vert w\vert^4)^{-1} \vert w\vert \vert 1+w\vert^{-a-1} \int_0^{\frac12} \frac{\vert w\vert t}{1+\epsilon^{-4}\vert w\vert^4 t^4}\vert w\vert dt\eqlab{est2conv} \\
&\le  2^{2a+7} \Wert Y\Wert \Wert Z\Wert  e^{\epsilon^{-1} \vert w\vert} (1+\epsilon^{-4}\vert w\vert^4)^{-1} \vert w\vert  \vert 1+w\vert^{-a-1} \epsilon^2 \int_0^{\infty } \frac{s}{1+s^4}ds,\nonumber
\end{align}
for all $w\in \Omega_+(\delta)$, 
using the substitution $s=\epsilon^{-1} \vert w\vert t$ and the  estimate $\int_0^{\frac12 \epsilon^{-1} \vert w \vert }(\cdots) ds\le \int_0^{\infty}(\cdots) ds$. 
Since $\int_0^{\infty } \frac{s}{1+s^4}ds=\pi/4<\infty$ this completes the proof of the estimate \eqref{YZest1}.

Finally, the estimate \eqref{YZest2} follows from \eqref{YZest1}. Indeed, we multiply both sides of \eqref{YZest1} by $e^{-\epsilon^{-1}\vert w\vert} (1+\epsilon^{-4} \vert w\vert^4) \vert w\vert^{-1}\vert 1+w\vert^{1+a}$ and let $\delta\rightarrow 0$. The result then follows by the definition of the norm $\Wert \cdot\Wert$ on the space $\mathcal H$, see \eqref{normG2}. 
\end{proof}

\begin{remark}\remlab{this}
We see the role of the $\vert w\vert^{-1}$-factor in the norm \eqref{normG2} in the estimates in \lemmaref{convolution}. Notice in particular, that in \eqref{est0conv} this factor leads to an integral
\begin{align*}
 \int_0^{\frac12} \frac{t}{\vert 1+w(1-t)\vert^{a+1}}dt,
\end{align*}
which can be estimated by \lemmaref{lowerbound2} ($n=1,a>1$). In this way, we obtain an upper bound with a singularity at $w=-1$ of the form $\vert 1+w\vert^{1-a}$. Notice that under convolution the singularity becomes ``less singular'', going from $\vert 1+w\vert^{-1-a}$ to $\vert 1+w\vert^{1-a}$. In essence, we gain a factor of $\vert 1+w\vert^2$. This improvement will be crucial in the next fundamental lemma, \lemmaref{YHest}. Without the $\vert w\vert^{-1}$-factor in the norm, we would not get this improvement. Notice at the same time, that the $\vert w\vert^{-1}$-factor leads to the integral 
\begin{align*}
\int_0^{\frac12} \frac{\vert w\vert t}{1+\epsilon^{-4}\vert w\vert^4 t^4}\vert w\vert dt
\end{align*}
in \eqref{est2conv}. This clearly demonstrates the necessity to replace $1+\epsilon^{-2}\vert w\vert^2$ in the norm of \cite{bonckaert2008a}, see \eqref{normG}. In particular, with this old factor we would have  
\begin{align*}
\int_0^{\frac12} \frac{\vert w\vert t}{1+\epsilon^{-2}\vert w\vert^2 t^2}\vert w\vert dt,
\end{align*}
instead,
which cannot be estimates uniformly in $\vert w\vert$ (with $\int_0^\infty \frac{s}{1+s^2}ds$ being divergent). 
\end{remark}

\begin{lemma}\lemmalab{YHest}
Consider $Y$ defined by \rspp{\eqref{YH}} and suppose that $H:\Omega\rightarrow \mathbb C\in \mathcal H$ is analytic and satisfies
\begin{align}\eqlab{estH}
 \vert H(w)\vert \le C e^{\epsilon^{-1} \vert w\vert} (1+\epsilon^{-4} \vert w\vert^{4})^{-1} \vert w\vert \times \begin{cases}
                  \epsilon^2 \vert 1+w\vert^{-a-1} & \forall\,w\in \Omega_+(\delta),\\
                  \vert 1+w\vert^{1-a} & \forall\,w\in \Omega_-(\delta),                                                                                                                     \end{cases}
\end{align}
for some $C>0$ 
for all $\epsilon>0$, $\delta>0$. 
Then there are constants $\overline C_1>0$ and $\overline C_2>0$ so that 
\begin{align*}
 \Wert Y\Wert \le C(\overline C_1\epsilon^2 \delta^{-2} + \overline C_2\delta),
\end{align*}
for all $0<\epsilon\ll 1$, $0<\delta<\frac12$.

\end{lemma}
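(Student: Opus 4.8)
The plan is to estimate the two terms in the explicit formula \eqref{YH} for $Y$ separately, splitting the domain into $\Omega_-(\delta)$ and $\Omega_+(\delta)$ in each case, and in each regime reduce to the one-dimensional integral estimates of \lemmaref{lowerbound2} and the elementary bounds used in the proof of \lemmaref{convolution}. Concretely, we must show that on $\Omega_+(\delta)$ the quantity $\vert Y(w)\vert e^{-\epsilon^{-1}\vert w\vert}(1+\epsilon^{-4}\vert w\vert^4)\vert w\vert^{-1}\vert 1+w\vert^{a+1}$ is bounded by $C(\overline C_1\epsilon^2\delta^{-2}+\overline C_2\delta)$ and similarly on $\Omega_-(\delta)$. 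The first term $\frac{H(w)}{1+w}$ is immediate: multiplying the hypothesis \eqref{estH} by the norm weight gives, on $\Omega_+(\delta)$, a contribution bounded by $C\epsilon^2$ (using $\vert 1+w\vert^{a+1}\cdot\vert 1+w\vert^{-1}\cdot\vert 1+w\vert^{-a-1}=\vert 1+w\vert^{-1}\le\delta^{-1}$, so actually $C\epsilon^2\delta^{-1}$, which is absorbed into $\overline C_1\epsilon^2\delta^{-2}$), and on $\Omega_-(\delta)$, using $\vert 1+w\vert^{a+1}\cdot\vert 1+w\vert^{-1}\cdot\vert 1+w\vert^{1-a}=\vert 1+w\vert$, a contribution bounded by $C\delta$. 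So the first term already produces the shape $C(\epsilon^2\delta^{-2}+\delta)$; in fact it only needs $\delta^{-1}$, but leaving room for the integral term is harmless.

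The real work is the integral term $J(w):=\frac{a}{(1+w)^{a+1}}\int_0^w H(s)(1+s)^{a-1}\,ds$, the integration along $[0,w]$. I would parametrize $s=tw$, $t\in[0,1]$, so that $J(w)=\frac{aw}{(1+w)^{a+1}}\int_0^1 H(tw)(1+tw)^{a-1}\,dt$, and bound $\vert H(tw)\vert$ using \eqref{estH}, noting that $\epsilon^{-1}\vert tw\vert\le\epsilon^{-1}\vert w\vert$ so the exponential factor is controlled by $e^{\epsilon^{-1}\vert w\vert}$, and that $(1+\epsilon^{-4}\vert tw\vert^4)^{-1}\le 1$. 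One then has to be careful that as $t$ ranges over $[0,1]$ the point $tw$ may pass through $\Omega_-(\delta)$ even when $w\in\Omega_+(\delta)$ (and vice versa), so the piecewise bound in \eqref{estH} must be applied on the corresponding sub-intervals of $t$; since $tw$ is collinear with $0$ and $w$, the set $\{t:tw\in\Omega_-(\delta)\}$ is an interval and $\vert 1+tw\vert\le\vert 1+w\vert+\delta$ there, while on the complementary part $\vert 1+tw\vert\ge\delta$. The factors $\vert 1+tw\vert^{a-1}$ from the integrand combine with $\vert 1+tw\vert^{-a-1}$ or $\vert 1+tw\vert^{1-a}$ from \eqref{estH} to give $\vert 1+tw\vert^{-2}$ or $\vert 1+tw\vert^{0}$ respectively; the former is the dangerous one, and here is exactly where we must use that on $\Omega_+(\delta)$ we have $\vert 1+tw\vert\ge c\delta$ (or $\ge c\vert 1+w\vert$ by a \lemmaref{lowerbound1}-type estimate along the segment), producing the $\delta^{-2}$, times the $\epsilon^2$ from \eqref{estH}, and the $t$-integral of $\vert w\vert/(1+\epsilon^{-4}\vert tw\vert^4)$ contributing another $\epsilon$-power exactly as in the proof of \lemmaref{convolution} (the substitution $s=\epsilon^{-1}\vert w\vert t$ and $\int_0^\infty s(1+s^4)^{-1}ds<\infty$). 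Finally one reassembles: the prefactor $\vert a\vert\,\vert w\vert\,\vert 1+w\vert^{-a-1}$ times the norm weight $\vert 1+w\vert^{a+1}\vert w\vert^{-1}(1+\epsilon^{-4}\vert w\vert^4)$ leaves $\vert a\vert(1+\epsilon^{-4}\vert w\vert^4)$ to be killed by the $(1+\epsilon^{-4}\vert tw\vert^4)^{-1}$-type gains — this is the bookkeeping that the $p=4$ weight (rather than $p=2$) in \eqref{normG2} was designed to make work, cf. \remref{this}.

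I expect the main obstacle to be precisely this domain-splitting along the segment $[0,w]$: carefully controlling $\vert 1+tw\vert$ from below (to extract $\delta^{-2}$ on $\Omega_+$) and from above (by $\vert 1+w\vert+\delta$ near $w=-1$ on $\Omega_-$) uniformly in $t$, while simultaneously tracking enough powers of $\epsilon/\vert w\vert$ through the $t$-integral to convert the crude pointwise bound into the norm bound. A convenient technical device is to treat the two endpoints of the $\Omega_-$-sub-interval of $t$ by a lemma analogous to \lemmaref{lowerbound1} (the geometry is the same: $1+tw=(1-t)\cdot 1+t(1+w)$ is a convex combination, so its modulus is comparable to $(1-t)+t\vert 1+w\vert$ up to a universal constant when $\arg(1+w)$ is bounded away from $\pi$, which holds on $\Omega_-(\delta)$). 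Once these geometric facts are in place the rest is the same majorization as in \lemmaref{convolution}, and the two terms $C\epsilon^2\delta^{-2}$ and $C\delta$ emerge from the $\Omega_+$ and $\Omega_-$ contributions respectively, with $\overline C_1,\overline C_2$ depending only on $a$ and $\alpha$.
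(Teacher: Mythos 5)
Your overall architecture coincides with the paper's: split $Y$ into the two terms of \eqref{YH}, treat $H(w)/(1+w)$ by multiplying \eqref{estH} by the weight of \eqref{normG2} (your exponent bookkeeping there, giving $\epsilon^2\delta^{-1}$ on $\Omega_+(\delta)$ and $\delta$ on $\Omega_-(\delta)$, is exactly right), then parametrize the integral term by $s=tw$ and apply \eqref{estH} piecewise according to whether $tw\in\Omega_\pm(\delta)$. The gap is in how you reassemble the integral term. You discard the exponential at the outset (bounding $e^{\epsilon^{-1}\vert tw\vert}\le e^{\epsilon^{-1}\vert w\vert}$) and propose to kill the weight $1+\epsilon^{-4}\vert w\vert^4$ solely by the gain $(1+\epsilon^{-4}\vert tw\vert^4)^{-1}$ through the $t$-integral. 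That does not close: after cancelling $\vert 1+w\vert^{a+1}$, $\vert w\vert^{\pm 1}$ and the exponentials, and bounding $\vert 1+tw\vert^{-2}\le c^{-2}\delta^{-2}$, you are left with a factor
$\epsilon^2\delta^{-2}\,(1+\epsilon^{-4}\vert w\vert^4)\int_0^1\frac{\vert w\vert t}{1+\epsilon^{-4}(\vert w\vert t)^4}\,dt\le \tfrac{\pi}{4}\,\epsilon^2\delta^{-2}\,(1+\epsilon^{-4}\vert w\vert^4)\,\tfrac{\epsilon^2}{\vert w\vert}$,
and $(1+\epsilon^{-4}\vert w\vert^4)\epsilon^2/\vert w\vert$ is not bounded on $\Omega$: it is of order $\epsilon^{-2}$ already at $\vert w\vert=1$ (so you get only $\delta^{-2}$, not $\epsilon^2\delta^{-2}$) and of order $\epsilon^{-2}\vert w\vert^3$ on the unbounded sector. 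The polynomial gain is trivial near $t=0$, so it cannot compensate the weight there. The paper's proof instead retains the leftover exponential $e^{-\epsilon^{-1}\vert w\vert(1-t)}$ and splits the integral at $t=\tfrac12$: on $[0,\tfrac12]$ the factor $e^{-\frac12\epsilon^{-1}\vert w\vert}$ kills $(1+\epsilon^{-4}\vert w\vert^4)\,\epsilon^{-1}\vert w\vert$ and gives an $O(\epsilon^3)$ contribution, while on $[\tfrac12,1]$ one uses $\frac{1+\epsilon^{-4}\vert w\vert^4}{1+\epsilon^{-4}(\vert w\vert t)^4}\le 2^4$ together with $\frac{\vert w\vert t}{\vert 1+wt\vert^2}\le\delta^{-2}$, which for $\vert wt\vert>1$ relies on $\operatorname{Re}(wt)>0$ and $\vert 1+wt\vert\ge\vert wt\vert$ — your uniform lower bound $\vert 1+tw\vert\ge c\delta$ alone is not sufficient on the sector. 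This split-plus-exponential device is the missing idea, not mere bookkeeping.

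A secondary point: for $w\in\Omega_-(\delta)$, on the sub-interval where $tw\in\Omega_-(\delta)$ the powers of $\vert 1+tw\vert$ cancel exactly, $(1-a)+(a-1)=0$, so the integrand is only $O(1)$ there; the bound $\overline C_2\delta$ comes from the fact that this sub-interval has length $O(\delta)$ in $t$ (since $\vert w\vert\ge 1-\delta$ and $\vert w\vert(1-t)\ge 1-\delta$ force $t\lesssim\delta$), not from an upper bound on $\vert 1+tw\vert$ as your sketch suggests. Your convex-combination estimate for $\vert 1+tw\vert$ is fine as a substitute for \lemmaref{lowerbound1}-type geometry, but you still need to identify the short-interval mechanism to produce the factor $\delta$.
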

\begin{proof}
From the linearity of \rspp{\eqref{YH}} with respect to $H$ it follows that it is without loss of generality to take $C=1$. From \rspp{\eqref{YH}}, we then directly obtain that
 \begin{align*}
  \vert Y(w)\vert e^{-\epsilon^{-1} \vert w\vert} (1+\epsilon^{-4} \vert w\vert^{4}) \vert w\vert^{-1}\vert 1+w\vert^{a+1}\le \mathcal T_1(w)+a \mathcal T_2(w), 
 \end{align*}
where
\begin{align*}
 \mathcal T_1(w)  &=  \vert  H(w) \vert e^{-\epsilon^{-1} \vert w\vert} (1+\epsilon^{-4} \vert w\vert^{4}) \vert w\vert^{-1}\vert 1+w\vert^{a} ,\\
 \mathcal T_2(w)  &= e^{-\epsilon^{-1} \vert w\vert} (1+\epsilon^{-4} \vert w\vert^{4})  \int_0^1 \vert  H(ws) \vert \vert 1+ws\vert^{a-1} ds.
\end{align*}
We estimate $\mathcal T_1(w)$ and $\mathcal T_2(w)$ successively using \eqref{estH}. For $\mathcal T_1(w)$, we obtain:
\begin{align*}
 \mathcal T_1(w) &\le \vert 1+w\vert^{a} \times \begin{cases}
                  \epsilon^2 \vert 1+w\vert^{-1-a} & \forall\,w\in \Omega_+(\delta),\\
                  \vert 1+w\vert^{1-a} & \forall\,w\in \Omega_-(\delta),                                                                                                                     \end{cases}\\
                 & \le \begin{cases}
                  \epsilon^2 \delta^{-1} & \forall\,w\in \Omega_+(\delta),\\
                  \delta & \forall\,w\in \Omega_-(\delta),   \end{cases}
\end{align*}
where we have used that
\begin{align*}
\vert 1+w\vert&\gtrless \delta \quad \mbox{for all}\quad w\in \Omega_\pm(\delta),\,\mbox{respectively}.
\end{align*}
Moreover,
\begin{align}
 \mathcal T_2(w) &\le \int_0^1 e^{-\epsilon^{-1} \vert w\vert(1-s) } \frac{1+\epsilon^{-4} \vert w\vert^{4}}{1+\epsilon^{-4} (\vert w\vert s)^{4}} \vert w\vert s \times \begin{cases}
                  \epsilon^2  \vert 1+ws\vert^{-2} &\forall\, ws\in \Omega_+(\delta),\\
                  1 &\forall\, ws\in \Omega_-(\delta).                                                                                                                    \end{cases}ds.\eqlab{T2esthere}
\end{align}
We first suppose that $w\in \Omega_+(\delta)$. Then $ws\in \Omega_+(\delta)$ for all $s\in [0,1]$ and we divide the resulting integral into two pieces, $s\in [0,\tfrac12]$ and $s\in [\tfrac12,1]$, i.e. 
\begin{align*}
 \int_0^1 (\cdots) ds = \int_0^{\frac12}(\cdots)ds+\int_{\frac12}^1(\cdots)ds.
\end{align*}
Then in the first integral, we obtain 
\begin{align*}
 \int_0^{\frac12} (\cdots) ds\le e^{-(\epsilon^{-1} \vert w\vert) \frac12 } (1+(\epsilon^{-1} \vert w\vert)^{4})  (\epsilon^{-1}\vert w\vert)                 \epsilon^3  \le  \epsilon^3 F,
\end{align*}
where $F=\sup_{A\ge 0}e^{-\frac12 A}(1+A^4)A <\infty$. Here we have also used the estimate $\vert 1+ws\vert\ge \frac12$ for all $s\in [0,\tfrac12]$, $w\in \Omega_+(\delta)$, recall \eqref{simpleest}. 
Next, for the remaining integral from $s=\frac12$ to $s=1$ we obtain
\begin{align*}
\int_{\frac12}^1 (\cdots) ds\le  \epsilon^2 \int_{\frac12}^1 \frac{\vert w\vert s}{\vert 1+ws\vert^2}ds \frac{1+\epsilon^{-4} \vert w\vert^{4}}{1+\epsilon^{-4} (\vert w\vert \frac12)^{4}}.
\end{align*}
It is easy to show that 
\begin{align*}
 \frac{ \vert w\vert s}{\vert 1+ws\vert^{2}}\le \delta^{-2}\quad \forall\,s\in [\tfrac12,1],\,w\in \Omega_+(\delta),
\end{align*}
for all $0<\delta<\frac12$. Indeed, it clearly holds for $\vert w\vert s \le 1$ whereas for $\vert w\vert s>1$ we have $\operatorname{Re}(w s)>0$ (given that $w\in \Omega_+(\delta)$, recall \eqref{Omega2})  and  therefore $\vert 1+ws\vert \ge \vert ws\vert$. Consequently, 
\begin{align*}
 \frac{ \vert w\vert s}{\vert 1+ws\vert^{2}}\le \frac{1}{\vert w\vert s}< 1.
\end{align*}
Moreover, 
\begin{align}
 \frac{1+z}{1+z 2^{-4}}\le 2^4\quad \forall\,z\ge 0.\eqlab{Aest1}
\end{align}
Collectively, we have 
\begin{align*}
 \mathcal T_2(w)\le  \overline C_1 \epsilon^2 \delta^{-2}\quad \forall\, w\in \Omega_+(\delta),
 \end{align*}
 for $\overline C_1>0$ large enough. 
 
 Next, we turn to the case where $w\in \Omega_-(\delta)$ in \eqref{T2esthere} (in which case $\vert w\vert \le 1$). Then
\begin{align*} 
\mathcal T_2(w)&\le  \int_{s:ws\in \Omega_+(\delta)} e^{-\epsilon^{-1} \vert w\vert(1-s) } \frac{1+\epsilon^{-4} \vert w\vert^{4}}{1+\epsilon^{-4} (\vert w\vert s)^{4}} \vert w\vert s     \epsilon^2 \vert w\vert  \vert 1+ws\vert^{-2} ds \\
&+ \int_{s:ws\in \Omega_-(\delta)} e^{-\epsilon^{-1} \vert w\vert(1-s) } \frac{1+\epsilon^{-4} \vert w\vert^{4}}{1+\epsilon^{-4} (\vert w\vert s)^{4}} \vert w\vert s  ds. 
\end{align*}
For the first integral, we can proceed as above for $w\in \Omega_+(\delta)$ and estimate $\overline C_1\epsilon^2\delta^{-2}$. For the second integral, we substitute $s=1-t$ and find by simple geometry that $$\mbox{$(w\in \Omega_-(\delta)$ and $w(1-t)\in \Omega_-(\delta))$ $  \Rightarrow 0\le t\le \delta $}, $$ in the last integral. Then, since 
\begin{align*}
 \frac{1+\epsilon^{-4} \vert w\vert^{4}}{1+\epsilon^{-4} (\vert w\vert (1-t))^{4}}\le 2^4, 
\end{align*}
for all $t\in [0,\delta]$ with $0<\delta<\frac12$, recall \eqref{Aest1}, we have
\begin{align*} 
\mathcal T_2(w)&\le \overline C_1\epsilon^2 \delta^{-2} + \overline C_2\delta,
\end{align*}
for $\overline C_2>0$ large enough.  This completes the proof. 
\end{proof}
\begin{lemma}\lemmalab{Fest}
 Consider a convergent series $f=\sum_{k=2}^\infty f_k x^{\rsp{k}} \in x^2 \mathbb C\{x\}$, $\vert f_k\vert\le A T^{k-1}$ and let $F(w)=\mathcal B(f)(w)=\sum_{k=2}^\infty \frac{f_k}{(k-1)!}w^{k-1}$ denote the Borel transform of $f$. Then $F\in \mathcal H$ for all $0<\epsilon\ll 1$. In particular, there is an $\epsilon_0=\epsilon_0(a,T)>0$ small enough and a $K=K(a)>0$ large enough such that 
 \begin{align*}
  \Wert F\Wert\le  K A T,
 \end{align*}
 for all $0<\epsilon< \epsilon_0$.
\end{lemma}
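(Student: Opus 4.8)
The plan is to bound the weighted supremum defining $\Wert F\Wert$, see \eqref{normG2}, directly from the crude pointwise estimate on $F$ that the hypothesis $\vert f_k\vert\le AT^{k-1}$ provides. First I would record that $F$ is entire: the coefficients satisfy $\frac{\vert f_k\vert}{(k-1)!}\le \frac{AT^{k-1}}{(k-1)!}$, whose $(k-1)$-th root tends to $0$, so $F$ is in particular analytic on $\Omega$, and $F(0)=0$ since the series for $F$ starts at $w^1$. Hence $F\in \mathcal H$ will follow as soon as we show $\Wert F\Wert<\infty$.

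Termwise summation gives the pointwise bound
\[
 \vert F(w)\vert\le A\sum_{m\ge 1}\frac{(T\vert w\vert)^m}{m!}=A\bigl(e^{T\vert w\vert}-1\bigr)\le AT\vert w\vert\,e^{T\vert w\vert},\qquad w\in\Omega,
\]
using $e^{x}-1\le x e^{x}$ for $x\ge 0$. Substituting this into the norm \eqref{normG2}, the factor $\vert w\vert^{-1}$ cancels the $\vert w\vert$ above, and since $\vert 1+w\vert\le 1+\vert w\vert$ for every $w\in\mathbb C$ we obtain
\[
 \Wert F\Wert\le AT\,\sup_{w\in\Omega}\Bigl\{e^{(T-\epsilon^{-1})\vert w\vert}\,(1+\epsilon^{-4}\vert w\vert^{4})\,(1+\vert w\vert)^{a+1}\Bigr\}.
\]
It then remains to bound the supremum on the right by a constant depending only on $a$, provided $\epsilon$ is small relative to $T$.

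For this I would set $c:=\epsilon^{-1}-T$ and choose $\epsilon_0:=\min\{1/(1+T),\,1/(2T)\}$, so that for $0<\epsilon<\epsilon_0$ we have both $c\ge 1$ and $T\epsilon\le\tfrac12$, the latter giving $c\vert w\vert\ge\tfrac12\epsilon^{-1}\vert w\vert$. Splitting $e^{(T-\epsilon^{-1})\vert w\vert}=e^{-c\vert w\vert/2}\cdot e^{-c\vert w\vert/2}$, I would estimate the two pieces separately: by the substitution $u=\epsilon^{-1}\vert w\vert$,
\[
 e^{-c\vert w\vert/2}(1+\epsilon^{-4}\vert w\vert^{4})\le e^{-\epsilon^{-1}\vert w\vert/4}\bigl(1+(\epsilon^{-1}\vert w\vert)^{4}\bigr)\le \sup_{u\ge 0}e^{-u/4}(1+u^{4})=:G,
\]
an absolute constant, while, since $c\ge 1$,
\[
 e^{-c\vert w\vert/2}(1+\vert w\vert)^{a+1}\le e^{-\vert w\vert/2}(1+\vert w\vert)^{a+1}\le \sup_{r\ge 0}e^{-r/2}(1+r)^{a+1}=:H_a,
\]
which depends only on $a$. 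Multiplying gives $\Wert F\Wert\le AT\,G\,H_a$, which is the claimed bound with $K:=GH_a=K(a)$ and $\epsilon_0=\epsilon_0(T)$ (so in particular $\epsilon_0=\epsilon_0(a,T)$).

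The computation is routine; the only point needing care is ensuring the final constant $K$ is independent of both $\epsilon$ and $T$, which is exactly why one must rescale $u=\epsilon^{-1}\vert w\vert$ to absorb the potentially large factor $\epsilon^{-4}\vert w\vert^{4}$ against the exponential decay \emph{before} peeling off the polynomial weight $(1+\vert w\vert)^{a+1}$. I would also note that the behaviour of $\vert 1+w\vert$ near $w=-1$ causes no trouble here, since $\vert 1+w\vert$ enters \eqref{normG2} only to a positive power in the numerator, so $\vert 1+w\vert\le 1+\vert w\vert$ suffices uniformly on $\Omega$, in $B_1(0)$ as well as in the sector.
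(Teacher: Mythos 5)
Your proof is correct and follows essentially the same route as the paper's: the same pointwise bound $\vert F(w)\vert\le AT\vert w\vert e^{T\vert w\vert}$, cancellation of the $\vert w\vert^{-1}$ weight in \eqref{normG2}, and reduction to a one-dimensional supremum over $\vert w\vert$. The only difference is cosmetic: where the paper rescales $t\mapsto \epsilon t$ and appeals to the boundedness of $e^{-t}(1+t^4)e^{\epsilon T t}(1+\epsilon t)^{a+1}$, you split the exponential decay into two halves, which makes the uniformity of $K$ in $\epsilon$ and $T$ a bit more explicit.
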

\begin{proof}
 We have 
 \begin{align*}
  \vert F(w)\vert \le A \sum_{k=2}^\infty \frac{T^{k-1} \vert w\vert^{k-1}}{(k-1)!} \le A(e^{T\vert w\vert }-1)\le AT \vert w\vert e^{T\vert w\vert}.
 \end{align*}
The last equality is due to $\int_0^{T \vert w\vert} e^{s} ds \le T\vert w\vert e^{T\vert w\vert}$. Therefore
\begin{align*}
 \Wert F\Wert &\le  \sup_{w\in \Omega} \left(A T e^{(T-\epsilon^{-1})\vert w\vert} (1+\epsilon^{-4} \vert w\vert^4) \vert 1+w\vert^{a+1}\right)\\
 &\le A T \sup_{t\ge 0} \left( e^{(T-\epsilon^{-1})t} (1+\epsilon^{-4} t^4) \vert 1+t\vert^{a+1}\right)\\
 &=A T \sup_{t\ge 0} \left( e^{-t} (1+t^4) e^{\epsilon T t} \vert 1+\epsilon t\vert^{a+1}\right),
\end{align*}
where we have used that $a+1>0$ and replaced $t$ by $\epsilon t$ in the last equality. 
The function $t\mapsto e^{-t} (1+t^4) e^{\epsilon T t} \vert 1+\epsilon t\vert^{a+1}$ is clearly bounded with respect to $t\ge 0$ for any $\epsilon^{-1}>T$. 
The result now follows from the fact that $e^{-t_2}(1+t_2^4)$, $t_2\ge 0$, has a single maximum (around $t_2\approx 3.984$ with value $\approx 4.707$). 
\end{proof}

\begin{lemma}\lemmalab{thisfinal}
Consider $Y$ defined by \rspp{\eqref{YH}} and suppose that $H:\Omega\rightarrow \mathbb C$ analytic satisfies
\begin{align*}
 \vert H(w)\vert \le AT \vert w\vert e^{T\vert w\vert}.
\end{align*}
Then there is an $\epsilon_0=\epsilon_0(a,T)>0$ small enough and a $K=K(a)>0$ large enough such that 
\begin{align*}
 \Wert Y\Wert \le KAT\quad \forall\,\epsilon\in (0,\epsilon_0).
\end{align*}
\end{lemma}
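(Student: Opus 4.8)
The plan is to estimate the two terms in the explicit solution formula \eqref{YH} directly, reducing each to a one-variable supremum of exactly the type already met in the proof of \lemmaref{Fest}. First I would record that the hypothesis $\vert H(w)\vert\le AT\vert w\vert e^{T\vert w\vert}$ forces $H(0)=0$, so that \lemmaref{solYH} applies with $D=\Omega$ --- the set $\Omega$ is star-shaped at $0$ and disjoint from $(-\infty,-1]$ --- and therefore $Y$ is analytic on $\Omega$ with $Y(0)=0$; it only remains to bound $\Wert Y\Wert$. Substituting $s=wt$ in the integral in \eqref{YH} gives
\begin{align*}
 Y(w) = \frac{H(w)}{1+w} - \frac{aw}{(1+w)^{a+1}} \int_0^1 H(wt)(1+wt)^{a-1}\,dt,
\end{align*}
and multiplying by the weight $e^{-\epsilon^{-1}\vert w\vert}(1+\epsilon^{-4}\vert w\vert^4)\vert w\vert^{-1}\vert 1+w\vert^{a+1}$ from \eqref{normG2} cancels the powers of $\vert w\vert$ and of $\vert 1+w\vert$ cleanly, leaving $\vert Y(w)\vert$ times that weight bounded by $\mathcal S_1(w)+a\,\mathcal S_2(w)$, where $\mathcal S_1(w)=\vert H(w)\vert e^{-\epsilon^{-1}\vert w\vert}(1+\epsilon^{-4}\vert w\vert^4)\vert w\vert^{-1}\vert 1+w\vert^{a}$ and $\mathcal S_2(w)=e^{-\epsilon^{-1}\vert w\vert}(1+\epsilon^{-4}\vert w\vert^4)\int_0^1\vert H(wt)\vert\,\vert 1+wt\vert^{a-1}\,dt$.

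Next I would insert $\vert H(w)\vert\le AT\vert w\vert e^{T\vert w\vert}$. In $\mathcal S_1$ the factor $\vert w\vert$ cancels, giving $\mathcal S_1(w)\le AT\,e^{(T-\epsilon^{-1})\vert w\vert}(1+\epsilon^{-4}\vert w\vert^4)\vert 1+w\vert^{a}$. In $\mathcal S_2$, using $\vert H(wt)\vert\le AT\vert w\vert t\,e^{T\vert w\vert t}$, the monotonicity bound $\vert 1+wt\vert^{a-1}\le(1+\vert w\vert)^{a-1}$ for $t\in[0,1]$ (valid since $a-1>0$, and since $wt\in\Omega$ by star-shapedness so the left-hand side is defined), and $\int_0^1 t\,e^{T\vert w\vert t}\,dt\le\tfrac12 e^{T\vert w\vert}$, one obtains $\mathcal S_2(w)\le\tfrac12 AT\,e^{(T-\epsilon^{-1})\vert w\vert}(1+\epsilon^{-4}\vert w\vert^4)\,\vert w\vert(1+\vert w\vert)^{a-1}$. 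Since $\vert 1+w\vert^{a}\le(1+\vert w\vert)^a$ and $\vert w\vert(1+\vert w\vert)^{a-1}\le(1+\vert w\vert)^a$, both $\mathcal S_1(w)$ and $a\,\mathcal S_2(w)$ are bounded by a constant depending only on $a$ times $AT\,g_\epsilon(\vert w\vert)$, where $g_\epsilon(t):=e^{(T-\epsilon^{-1})t}(1+\epsilon^{-4}t^4)(1+t)^a$.

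Finally, exactly as in \lemmaref{Fest}, I would bound $\sup_{t\ge0}g_\epsilon(t)$ by substituting $t=\epsilon\tau$: then $g_\epsilon(\epsilon\tau)=e^{(\epsilon T-1)\tau}(1+\tau^4)(1+\epsilon\tau)^a$, and for $0<\epsilon<\epsilon_0:=\min(1,\tfrac{1}{2T})$ this is at most $e^{-\tau/2}(1+\tau^4)(1+\tau)^a$, whose supremum over $\tau\ge0$ is a finite constant $M(a)$ depending only on $a$. Collecting the constants gives $\Wert Y\Wert\le(1+\tfrac a2)M(a)\,AT=:KAT$ with $K=K(a)$, uniformly in $\epsilon\in(0,\epsilon_0)$. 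I do not expect a genuine obstacle: the argument is essentially the computation of \lemmaref{Fest} carried out on the two summands of \eqref{YH}, and the only place the normal-form condition $a>1$ enters is to make the exponent $a-1$ in $\mathcal S_2$ nonnegative, which is --- in the opposite, singular direction --- precisely where $a>1$ was used in \lemmaref{YHest}. The mildly fiddly step is just checking that the weight factors in \eqref{normG2} cancel as claimed against the powers of $\vert w\vert$ and $\vert 1+w\vert$ coming out of \eqref{YH}.
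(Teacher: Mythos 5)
Your proposal is correct and follows essentially the same route as the paper's proof: split \eqref{YH} into its two terms, multiply by the weight in \eqref{normG2} so the powers of $\vert w\vert$ and $\vert 1+w\vert$ cancel, insert the bound on $H$ (using $a>1$ for $\vert 1+wt\vert^{a-1}\le(1+\vert w\vert)^{a-1}$), and reduce to a one-variable supremum via the rescaling $\vert w\vert=\epsilon t$ as in \lemmaref{Fest}. Your version merely makes the constants and $\epsilon_0$ explicit where the paper says ``proceeding as in the proof of \lemmaref{Fest}''.
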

\begin{proof}
We estimate
\begin{align*}
  \vert Y(w)\vert e^{-\epsilon^{-1} \vert w\vert} (1+\epsilon^{-4} \vert w\vert^{4}) \vert w\vert^{-1}\vert 1+w\vert^{a+1}\le AT (\mathcal T_1(w)+a \mathcal T_2(w)), 
 \end{align*}
 where
 \begin{align*}
  \mathcal T_1(w)&=e^{(T-\epsilon^{-1}) \vert w\vert } (1+\epsilon^{-4}\vert w\vert^4) \vert 1+w\vert\\
  &\le \sup_{t\ge 0} e^{-t} (1+t^4) e^{\epsilon T t}(1+\epsilon t),
 \end{align*}
 and 
 \begin{align*}
 \mathcal T_2(w) &= \int_0^1 \vert w\vert s e^{T\vert w\vert s}\vert 1+s w\vert^{a-1} ds e^{-\epsilon^{-1} \vert w\vert} (1+\epsilon^{-4} \vert w\vert^{4})\\
  &\le \vert w\vert e^{(T-\epsilon^{-1}) \vert w \vert } (1+\vert w\vert)^{a-1} (1+\epsilon^{-4} \vert w\vert^{4})\\
  &\le \epsilon \sup_{t\ge 0} t e^{-t}  (1+t^4)  e^{\epsilon Tt}(1+\epsilon t)^{a-1}.
 \end{align*}
Here we have used that $a>1$. Proceeding as in the proof of \lemmaref{Fest}, the result now follows. 
\end{proof}

\subsection{Solving the equation}\seclab{solve}
 We are now in a position to solve \eqref{Yeqn} in $\mathcal H$ through a fix-point argument. Let $B_Q^{\mathcal H}\subset \mathcal H$ denote the ball of radius $Q>0$ centered at the origin in $\mathcal H$
and define the operator $\mathcal T:B_Q^{\mathcal H}\rightarrow B_Q^{\mathcal H}$ through \eqref{YH}:
\begin{align*}
 \mathcal T(Y)(w)&= \frac{F_0(w)}{1+w} -\frac{a}{(1+w)^{a+1}}\int_0^w F_0(s)(1+s)^{a-1}ds\\
 &+\frac{1}{1+w} (Y\star \sum_{l=2}^\infty F_{2,l} \star Y^{\star (l-1)})(w)-\frac{a}{(1+w)^{a+1}}\int_0^w (Y\star \sum_{l=2}^\infty F_{2,l} \star Y^{\star (l-1)})(s)(1+s)^{a-1} ds.
 \end{align*}
 We emphasize that \eqref{Yeqn} with $Y\in B_Q^{\mathcal H}$ is equivalent with $Y=\mathcal T(Y)$, $Y\in B_Q^{\mathcal H}$, cf. \lemmaref{solYH}.
The next lemma, \textbf{which uses $\delta=\epsilon^{2/3}$}, shows that $\mathcal T$ is well-defined.  
\begin{lemma}\lemmalab{Tdefn}
 There is a $Q>0$ large enough and an $\epsilon_0=\epsilon_0(Q)>0$, such that $\mathcal T:B_Q^{\mathcal H}\rightarrow B_Q^{\mathcal H}$ is well-defined for all $0<\epsilon<\epsilon_0$.
\end{lemma}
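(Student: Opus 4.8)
The plan is to verify that the map $\mathcal T$ sends the ball $B_Q^{\mathcal H}$ into itself by estimating each of the four terms defining $\mathcal T(Y)$ separately, in the norm $\Wert\cdot\Wert$. The first two terms together constitute exactly the function $Y$ from Lemma~\ref{lemma:solYH} applied to $H=F_0$, the Borel transform of $x\mapsto x^2 f_0(x)$; by Lemma~\ref{lemma:Fest}, $F_0\in\mathcal H$ with $\Wert F_0\Wert\le KAT$, and by Lemma~\ref{lemma:thisfinal} (or equivalently Lemma~\ref{lemma:YHest} after noting $F_0$ satisfies the requisite pointwise bound) these two terms have $\Wert\cdot\Wert$-norm bounded by a constant $K_0=K_0(a,T)$ independent of $\epsilon$ for $0<\epsilon<\epsilon_0$. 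The last two terms together are the function $Y$ of Lemma~\ref{lemma:solYH} applied to $H = G(Y):= Y\star\sum_{l\ge 2}F_{2,l}\star Y^{\star(l-1)}$, so the strategy is: first bound $\Wert G(Y)\Wert$ for $Y\in B_Q^{\mathcal H}$, then feed that bound into Lemma~\ref{lemma:YHest}.

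First I would estimate the inner series $G(Y)$. Using the convolution estimate \eqref{eq:YZest2}, $\Wert A\star B\Wert\le\mathcal C\epsilon^2\Wert A\Wert\Wert B\Wert$, one gets by induction $\Wert Y^{\star l}\Wert\le (\mathcal C\epsilon^2)^{l-1}\Wert Y\Wert^l$ for $l\ge 1$ (recalling $Y^{\star 1}=1$ must be handled; more precisely $Y\star F_{2,l}\star Y^{\star(l-1)}$ is a convolution of $l+1$ factors — the $l$ copies of $Y$ and one copy of $F_{2,l}$ — hence carries $(\mathcal C\epsilon^2)^{l}$). Combined with $\Wert F_{2,l}\Wert\le K A T^{l-1}$ from Lemma~\ref{lemma:Fest} applied to each $f_{2,l}$, and $\Wert Y\Wert\le Q$, the $l$-th term is bounded by $K A T^{l-1}(\mathcal C\epsilon^2)^l Q^l$, so
\begin{align*}
 \Wert G(Y)\Wert \le KA\,\mathcal C\epsilon^2 Q^2 \sum_{l\ge 2}(T\mathcal C\epsilon^2 Q)^{l-2} = \frac{KA\,\mathcal C\epsilon^2 Q^2}{1-T\mathcal C\epsilon^2 Q},
\end{align*}
which is $\le 2KA\,\mathcal C\epsilon^2 Q^2$ once $\epsilon$ is small enough that $T\mathcal C\epsilon^2 Q\le\tfrac12$. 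Crucially, $G(Y)$ is a convolution, so by Lemma~\ref{lemma:convolution} (specifically the pointwise bound \eqref{eq:YZest1}) it satisfies the hypothesis \eqref{eq:estH} of Lemma~\ref{lemma:YHest} with constant $C = \Wert G(Y)\Wert$ — this is precisely why the bound \eqref{eq:YZest1} was stated in exactly the form required by \eqref{eq:estH}. Applying Lemma~\ref{lemma:YHest} with $\delta=\epsilon^{2/3}$ gives that the last two terms of $\mathcal T(Y)$ have norm at most $\Wert G(Y)\Wert(\overline C_1\epsilon^2\delta^{-2}+\overline C_2\delta) = \Wert G(Y)\Wert(\overline C_1\epsilon^{2/3}+\overline C_2\epsilon^{2/3})\lesssim \epsilon^{2}\epsilon^{2/3}Q^2\to 0$.

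Putting it together: $\Wert\mathcal T(Y)\Wert\le K_0 + C'\epsilon^{2/3}\epsilon^2 Q^2$ for $Y\in B_Q^{\mathcal H}$. I would first fix $Q:=2K_0$, then choose $\epsilon_0=\epsilon_0(Q)$ small enough that simultaneously $T\mathcal C\epsilon^2 Q\le\tfrac12$ and $C'\epsilon^{2/3}\epsilon^2 Q^2\le K_0 = Q/2$; then $\Wert\mathcal T(Y)\Wert\le Q$ for all $0<\epsilon<\epsilon_0$, so $\mathcal T:B_Q^{\mathcal H}\to B_Q^{\mathcal H}$. I expect the main obstacle to be purely bookkeeping rather than conceptual: carefully tracking the powers of $\mathcal C\epsilon^2$ through the $l$-fold convolutions and the geometric series, and checking that $G(Y)$ genuinely inherits the two-regime bound \eqref{eq:estH} with the split between $\Omega_+(\delta)$ and $\Omega_-(\delta)$ — the latter is immediate from Lemma~\ref{lemma:convolution} since $G(Y)$ is a single convolution $Y\star(\cdots)$, but one must make sure the constant appearing in \eqref{eq:estH} can indeed be taken to be $\Wert G(Y)\Wert$ times a universal factor, which it can. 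The choice $\delta=\epsilon^{2/3}$ is what balances $\epsilon^2\delta^{-2}=\epsilon^{2/3}$ against $\delta=\epsilon^{2/3}$ in Lemma~\ref{lemma:YHest}, and any exponent in $(0,1)$ strictly between the competing constraints would work; I would simply record $\delta=\epsilon^{2/3}$ as stated.
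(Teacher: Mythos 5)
Your proposal is correct and follows essentially the same route as the paper: split $\mathcal T$ into the $F_0$-part (bounded uniformly in $\epsilon$ via \lemmaref{Fest} and \lemmaref{thisfinal}, giving the constant that fixes $Q$) and the nonlinear part, whose inner series is summed with \eqref{YZest2} and \lemmaref{Fest}, after which the outer convolution's pointwise bound \eqref{YZest1} feeds into \lemmaref{YHest} with $\delta=\epsilon^{2/3}$ to give an $\mathcal O(\epsilon^{8/3})$ contribution. The only nitpick is that the constant in \eqref{estH} is $\mathcal C\Wert Y\Wert\,\Wert \sum_{l\ge2}F_{2,l}\star Y^{\star(l-1)}\Wert$ rather than literally $\Wert G(Y)\Wert$, but both are $\mathcal O(\epsilon^2 Q^2)$, so your final estimate and choice of $Q$ and $\epsilon_0$ match the paper's.
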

\begin{proof}
For $Y\in B_Q^{\mathcal H}\subset \mathcal H$, we split $\mathcal T(Y)(w)$ into two terms:
\begin{align*}
 \mathcal T_1(w)=\frac{F_0(w)}{1+w} -\frac{a}{(1+w)^{a+1}}\int_0^w F_0(s)(1+s)^{a-1}ds,
\end{align*}
and
\begin{align*}
 \mathcal T_2(w) = \frac{1}{1+w} (Y\star \sum_{l=2}^\infty F_{2,l} \star Y^{\star (l-1)})(w)-\frac{a}{(1+w)^{a+1}}\int_0^w (Y\star \sum_{l=2}^\infty F_{2,l} \star Y^{\star (l-1)})(s)(1+s)^{a-1} ds.
\end{align*}
We first notice that $\mathcal T_1\in \mathcal H$ by \lemmaref{thisfinal}. Let $P=\Wert \mathcal T_1\Wert$ denote its norm, which is independent of $Y$. Next, for $\mathcal T_2$ we apply
\lemmaref{YHest}. For this purpose, we show that $H=Y\star \sum_{l=2}^\infty F_{2,l} \star Y^{\star (l-1)}$ satisfies the estimate \eqref{estH}. But this follows from \lemmaref{convolution}, see \eqref{YZest1}, once we have shown that $\sum_{l=2}^\infty F_{2,l} \star Y^{\star (l-1)}\in \mathcal H$:

By \lemmaref{Fest}, we have $\Wert F_{2,l}\Wert \le A T^{l-1}$ for some (new) $A,T>0$ and therefore by \lemmaref{convolution}, see \eqref{YZest2}, we have
\begin{align*}
\Wert \sum_{l=2}^\infty F_{2,l} \star Y^{\star (l-1)}\Wert \le \sum_{l=2}^\infty (\mathcal C\epsilon^2)^{l-1} A T^{l-1} \Wert Y\Wert^{l-1}\le A \sum_{l=1}^\infty ( \mathcal C T Q \epsilon^2 )^{l} \le  2A \mathcal C T Q \epsilon^2,
\end{align*}
for all $0<\epsilon<(2\mathcal C TQ)^{-\frac12}$. Given that we take $\delta=\epsilon^{2/3}$, it now follows that
\begin{align*}
 \Wert \mathcal T_2\Wert \le 2A \mathcal C T Q \epsilon^2 (\overline C_1\epsilon^2\delta^{-2} + \overline C_2\delta)= 2A \mathcal C T Q \epsilon^{8/3} (\overline C_1 + \overline C_2).
\end{align*}
We conclude that $\mathcal T:B_{Q}^{\mathcal H}\rightarrow B_Q^{\mathcal H}$ with $Q=2P$ is well-defined for all $0<\epsilon\ll 1$. 

\end{proof}

\begin{proposition}\proplab{Ysol}
 Let $Q=2P>0$ be as in (the proof of) \lemmaref{Tdefn}. Then $\mathcal T:B_Q^{\mathcal H}\rightarrow B_Q^{\mathcal H}$ is a contraction for all $0<\epsilon\ll 1$. 
 \end{proposition}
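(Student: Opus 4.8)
The statement to be established is \propref{Ysol}, and since \lemmaref{Tdefn} already gives that $\mathcal T$ maps $B_Q^{\mathcal H}$ into itself, the plan is to rerun essentially the same chain of estimates as in the proof of \lemmaref{Tdefn}, but now applied to the difference $\mathcal T(Y)-\mathcal T(Z)$. The first step is to notice that in $\mathcal T(Y)-\mathcal T(Z)$ the affine $F_0$-terms (the first two terms in the definition of $\mathcal T$) are independent of the argument and hence cancel. Writing $G(Y):=\sum_{l=2}^\infty F_{2,l}\star Y^{\star(l-1)}$ and $H_Y:=Y\star G(Y)$, what remains is exactly the function obtained from \eqref{YH} with $H$ replaced by $H_Y-H_Z$; that is, $\mathcal T(Y)-\mathcal T(Z)=Y_{H_Y-H_Z}$ in the notation of \lemmaref{solYH}. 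Therefore, by \lemmaref{YHest} used with $\delta=\epsilon^{2/3}$ (just as in \lemmaref{Tdefn}), it suffices to show that $H_Y-H_Z$ obeys the pointwise bound \eqref{estH} with a constant $C$ of the form $C\le \mathrm{const}\cdot Q\,\epsilon^{2}\,\Wert Y-Z\Wert$, uniformly in $Y,Z\in B_Q^{\mathcal H}$ and in $0<\epsilon\ll 1$.

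The key algebraic step is to telescope the nonlinearity so that every resulting term is linear in $Y-Z$. Using bilinearity of $\star$ I would write
\begin{align*}
 H_Y-H_Z = (Y-Z)\star G(Y) + Z\star\bigl(G(Y)-G(Z)\bigr),
\end{align*}
and then, using commutativity and associativity of $\star$ (inherited from $\mathcal G_\epsilon$),
\begin{align*}
 G(Y)-G(Z) = \sum_{l=2}^\infty F_{2,l}\star\bigl(Y^{\star(l-1)}-Z^{\star(l-1)}\bigr),\qquad Y^{\star k}-Z^{\star k} = \sum_{j=0}^{k-1} Y^{\star j}\star(Y-Z)\star Z^{\star(k-1-j)},
\end{align*}
with the convention that a $\star$-power with exponent $0$ is simply dropped. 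Each summand then contains exactly one factor $Y-Z$, at most $l-2$ further factors of norm $\le Q$, one factor $F_{2,l}$ with $\Wert F_{2,l}\Wert\le A T^{l-1}$ by \lemmaref{Fest}, and at least one convolution. Applying \eqref{YZest2} to collapse the $\star$-powers and then \eqref{YZest1} to write each of the two pieces of $H_Y-H_Z$ directly in the \eqref{estH}-shape (with constant $\mathcal C$ times a product of norms), one is left with geometric series in $\mathcal C\epsilon^2 TQ$ — together with the differentiated series $\sum_{l\ge 2}(l-1)(\mathcal C\epsilon^2 TQ)^{l-2}$ for the $G(Y)-G(Z)$ piece. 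As in \lemmaref{Tdefn}, these converge once $\epsilon$ is small enough that $\mathcal C\epsilon^2 TQ<\tfrac12$, and every convolution carries the favourable factor $\epsilon^2$; summing up yields that $H_Y-H_Z$ satisfies \eqref{estH} with $C\le\mathrm{const}\cdot Q\,\epsilon^{2}\,\Wert Y-Z\Wert$.

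Feeding this into \lemmaref{YHest} with $\delta=\epsilon^{2/3}$ then gives
\begin{align*}
 \Wert\mathcal T(Y)-\mathcal T(Z)\Wert \le \mathrm{const}\cdot Q\,\epsilon^{2}\bigl(\overline C_1\epsilon^{2}\delta^{-2}+\overline C_2\delta\bigr)\Wert Y-Z\Wert = \mathrm{const}\cdot Q\,\epsilon^{8/3}\,\Wert Y-Z\Wert,
\end{align*}
so that choosing $\epsilon_0>0$ small enough that $\mathrm{const}\cdot Q\,\epsilon^{8/3}<1$ for all $0<\epsilon<\epsilon_0$ makes $\mathcal T$ a contraction. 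The only mildly delicate point — and the place where the preparatory lemmas earn their keep — is controlling the infinite sum over $l$ while keeping track of the \emph{pointwise} estimate \eqref{estH} (which, being linear in the constant $C$, is stable under both the finite telescoping sums and the infinite sum over $l$) rather than merely the norm, so that \lemmaref{YHest} can be invoked at the end. Once this is arranged, the remainder is the same routine geometric-series bookkeeping as in the proof of \lemmaref{Tdefn}, and in particular requires no new smallness beyond $0<\epsilon\ll 1$.
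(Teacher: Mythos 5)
Your argument is correct and takes essentially the same route as the paper: the paper's (very terse) proof simply repeats the estimates of \lemmaref{Tdefn} applied to $D\mathcal T(Y)(Z)$, invoking \lemmaref{YHest} with $\delta=\epsilon^{2/3}$ to obtain a Lipschitz constant of order $\epsilon^{8/3}$, and your telescoping of $\mathcal T(Y)-\mathcal T(Z)$ is the difference-quotient version of exactly that bookkeeping, using the same key lemmas (\lemmaref{convolution}, \lemmaref{YHest}) and arriving at the same $\mathcal O(\epsilon^{8/3})\Wert Y-Z\Wert$ bound. The only cosmetic difference is that you estimate the increment directly instead of bounding the derivative and appealing to the mean value inequality on the convex ball $B_Q^{\mathcal H}$.
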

 \begin{proof}
Having already established that $\mathcal T:B_Q^{\mathcal H}\rightarrow B_Q^{\mathcal H}$ is well-defined for all $0<\epsilon\ll 1$, we have to bound the Lipschitz constant. For this we proceed as in \cite{bonckaert2008a} and repeat the estimates of \lemmaref{Tdefn} to $D\mathcal T(Y)(Z)$:
This leads to
\begin{align*}
 \Wert D\mathcal T(Y)(Z)\Wert =\mathcal O(\epsilon^{8/3}) \Wert Z\Wert\quad \forall\,Y\in B_Q^{\mathcal H},\,Z\in \mathcal H,
\end{align*}
using $\delta=\epsilon^{2/3}$. 
 \end{proof}
\subsection{Initial estimate of $\phi_n$}\seclab{init}
  We now fix $\epsilon>0$ small enough, and denote the unique fix-point of $\mathcal T$ in $B_Q^{\mathcal H}$ (\rspp{its existence} follows from Banach's fix-point theorem) by $\Phi$. 
 Define
 \begin{align*}
  Z(w) = \Phi(w)(1+w)^{a+1}.
 \end{align*}
 It follows that  $Z(w)$ is analytic and uniformly bounded on $B_1(0)$ (recall that $\epsilon>0$ is fixed). Consequently, by Cauchy's integral formula, we have that  $Z(w)=\sum_{n=1}^\infty Z_n w^n$, $0\le \vert w\vert<1$, with $\vert Z_n\vert\le C_0$ for some $C_0>0$ large enough. In turn, by Cauchy's product formula and the analytic expansion
\begin{align*}
 (1+w)^{-1-a} = \sum_{k=0}^\infty \frac{(-1)^k \Gamma(a+1+k)}{\Gamma(k+1)\Gamma(a+1)} w^k,\quad w\in B_1(0),
\end{align*}
we have that $\Phi(w)=\sum_{n=1}^\infty \Phi_n w^n$ with
\begin{align}
 \Phi_n = \sum_{k=0}^{n-1} Z_{n-k} \frac{(-1)^k \Gamma(a+1+k)}{\Gamma(k+1)\Gamma(a+1)}.\eqlab{PhinZ}
\end{align}
This directly leads to the following.
\begin{lemma}\lemmalab{borellaplaceest}
Let $\phi=\sum_{n=2}^\infty \phi_n x^n\in x^2 \mathbb C[[x]]$ denote the formal series solution of \eqref{system0}. Then there is a $C>0$ such that 
the following estimate holds true:
 \begin{align}
  \vert \phi_n\vert \le C \Gamma(a+n+1)\quad \forall\, n\ge 2.\eqlab{phinest}
 \end{align}

\end{lemma}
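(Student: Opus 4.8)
The plan is to extract the asymptotic bound on $\phi_n$ directly from the Borel-plane representation established in the preceding subsection, namely the identity \eqref{PhinZ}
\begin{align*}
 \Phi_n = \sum_{k=0}^{n-1} Z_{n-k} \frac{(-1)^k \Gamma(a+1+k)}{\Gamma(k+1)\Gamma(a+1)},
\end{align*}
together with the fact that the Taylor coefficients $Z_n$ of $Z(w)=\Phi(w)(1+w)^{a+1}$ are uniformly bounded, $|Z_n|\le C_0$, since $Z$ is analytic and bounded on $B_1(0)$ (here $\epsilon>0$ is fixed, and this boundedness is exactly what $\Phi\in\mathcal H$ gives us once we multiply out the singular factor $|1+w|^{a+1}$ in the norm \eqref{normG2}). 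The first step is therefore to bound $|\Phi_n|$ by $C_0/\Gamma(a+1)$ times $\sum_{k=0}^{n-1} \frac{\Gamma(a+1+k)}{\Gamma(k+1)}$. The elementary estimate $\Gamma(a+1+k)/\Gamma(k+1) \le \Gamma(a+1+n)/(\Gamma(n)\,\Gamma(a+1))$ — or rather the observation that this ratio is increasing in $k$ and that there are $n$ terms — gives $|\Phi_n| \le C_1\, \Gamma(a+n)/\Gamma(n)$ for a suitable constant $C_1$ (one can even be a little crude here: $\sum_{k=0}^{n-1}\Gamma(a+1+k)/\Gamma(k+1)$ is comparable to $\Gamma(a+n+1)/\Gamma(n+1)$ up to a constant depending only on $a$, by comparison with a geometric-type sum since $\Gamma(a+1+k)/\Gamma(k+1)\sim k^a$).

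The second step is to convert the bound on the Borel coefficients $\Phi_n$ into a bound on the original coefficients $\phi_n$. Recall that $\Phi=\mathcal B(\phi)$, so by definition of the Borel transform $\phi_n = (n-1)!\,\Phi_{n-1}$, i.e. $\phi_n = \Gamma(n)\,\Phi_{n-1}$. Combining with the previous step,
\begin{align*}
 |\phi_n| = \Gamma(n)\,|\Phi_{n-1}| \le \Gamma(n)\cdot C_1\,\frac{\Gamma(a+n-1)}{\Gamma(n-1)} = C_1\,(n-1)\,\Gamma(a+n-1),
\end{align*}
and since $(n-1)\,\Gamma(a+n-1) \le \Gamma(a+n+1)$ for $a\ge 2$ and $n$ large (because $\Gamma(a+n+1) = (a+n)(a+n-1)\Gamma(a+n-1) \ge (n-1)\Gamma(a+n-1)$), we obtain $|\phi_n|\le C\,\Gamma(a+n+1)$ for all $n\ge 2$ after enlarging $C$ to absorb finitely many small-$n$ terms. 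This is precisely \eqref{phinest}.

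The main point to be careful about — really the only substantive one — is the justification that $Z$ is analytic and \emph{bounded} on all of $B_1(0)$, including up to (but not at) the boundary point $w=-1$. This is where the design of the norm $\Wert\cdot\Wert$ in \eqref{normG2} pays off: the factor $|1+w|^{a+1}$ was built in precisely so that $\Wert\Phi\Wert<\infty$ forces $|\Phi(w)|\,|1+w|^{a+1}$ to stay bounded near $w=-1$, hence $Z=\Phi\cdot(1+w)^{a+1}$ extends boundedly. Since $\epsilon$ is now fixed, the other factors $e^{-\epsilon^{-1}|w|}(1+\epsilon^{-4}|w|^4)|w|^{-1}$ in the norm only contribute fixed constants on the compact closure of $B_1(0)$, so boundedness of $Z$ on $B_1(0)$ follows, and Cauchy's estimates give $|Z_n|\le C_0$. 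Everything else is the routine gamma-function bookkeeping sketched above (using $\Gamma(z+1)=z\Gamma(z)$ and the monotonicity of $k\mapsto\Gamma(a+1+k)/\Gamma(k+1)$), for which the estimates in \appref{gamma} suffice. I do not expect any genuine obstacle; the lemma is essentially a clean corollary of the fixed-point construction, and the "initial" in its statement signals that the sharp constant $S_\infty$ and the limit \eqref{mnasymp} are deferred to the bootstrap in \secref{boot}.
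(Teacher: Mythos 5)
Your proposal follows essentially the same route as the paper's proof: the identity \eqref{PhinZ}, boundedness of $Z(w)=\Phi(w)(1+w)^{a+1}$ on $B_1(0)$ (coming from $\Wert\Phi\Wert<\infty$ with $\epsilon$ fixed, exactly as you argue), and the conversion $\phi_n=(n-1)!\,\Phi_{n-1}$. There is, however, one quantitative slip in your intermediate estimate: the sum $\sum_{k=0}^{n-1}\Gamma(a+1+k)/\Gamma(k+1)$ is \emph{not} of size $\Gamma(a+n)/\Gamma(n)\sim n^{a}$, nor comparable to $\Gamma(a+n+1)/\Gamma(n+1)$ as your parenthetical claims; since $\Gamma(a+1+k)/\Gamma(k+1)\sim k^{a}$, the sum grows like $n^{a+1}$ (in fact it equals $\Gamma(a+n+1)/\bigl((a+1)\Gamma(n)\bigr)$ exactly), and this is also what your own ``largest term times $n$ terms'' argument actually delivers, namely $n\,\Gamma(a+n)/\Gamma(n)\le \Gamma(a+n+1)/\Gamma(n)$. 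So the correct intermediate bound is $|\Phi_n|\le C_1\,\Gamma(a+n+1)/\Gamma(n)$, one power of $n$ larger than what you wrote. This does not damage the conclusion: with the corrected bound, $|\phi_n|=\Gamma(n)\,|\Phi_{n-1}|\le C_1\,(n-1)\,\Gamma(a+n)\le C_1\,\Gamma(a+n+1)$, which is precisely \eqref{phinest} --- the slack you left in your final step (your $(n-1)\Gamma(a+n-1)\le\Gamma(a+n+1)$) exactly absorbs the lost factor of $n$. For comparison, the paper reaches the same $n^{a+1}$ bound by monotonicity of $k\mapsto\Gamma(a+1+k)/\Gamma(k+1)$ together with an integral comparison, and then concludes identically; so your argument is sound once the intermediate inequality is restated correctly.
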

\begin{proof}
The proof follows from simple calculations: Indeed, from  \eqref{PhinZ} and the previous discussion, we have that 
 \begin{align*}
  \vert \Phi_n\vert \le \frac{C_0}{\Gamma(a+1)} \sum_{k=0}^{n-1} \frac{\Gamma(a+1+k)}{\Gamma(k+1)}.
 \end{align*}
The quantity $p(k):=\frac{\Gamma(a+1+k)}{\Gamma(k+1)}$ is increasing with respect to $k$:
\begin{align*}
 p'(k) = \frac{\Gamma(a+1+k)}{\Gamma(k+1)}\left(\psi(a+1+k)-\psi(1+k)\right)>0,
\end{align*}
since $a>0$ (recall \eqref{digamma}) and 
\begin{align}
 p(k) =(1+o_{k\rightarrow \infty}(1))k^{a}.\eqlab{estgamma}
\end{align}
by \eqref{stirling}.
Fix $n_0\gg 1$ and consider any $n\ge n_0$. We can then estimate
\begin{align*}
 \vert \Phi_n\vert \le C_1 + C_2 \int_{n_0}^{n} k^{a} dk\le C_3 n^{a+1}\le 2C_3 \frac{\Gamma((a+1)+n+1)}{\Gamma(n+1)} \quad\forall\,n\ge n_0,
\end{align*}
using \eqref{estgamma},
where $C_i=C_i(a,n_0)>0$, $i=1,\ldots,3$. In total, there is a $C_4=C_4(a,n_0)>0$ large enough such that 
\begin{align}\eqlab{Yn_1Est}
\vert \Phi_{n-1}\vert\le C_4\frac{\Gamma(a+1+n)}{\Gamma(n)} = C_4 \frac{\Gamma(a+1+n)}{(n-1)!} \quad \forall\, n\ge 2.
\end{align}
Now, recall that 
\begin{align*}
\phi = \mathcal B^{-1}\left(\sum_{n=2}^\infty \Phi_{n-1} w^{n-1}\right) = \sum_{n=2}^\infty (n-1)! \Phi_{n-1} x^n\in x^2\mathbb C[[x]],
\end{align*}
so that $\phi_n=(n-1)! \Phi_{n-1}$. We therefore obtain the estimate 
\begin{align*}
 \vert \phi_n\vert \le C_4 \Gamma(a+1+n)\quad \forall \,n\ge 2,
\end{align*}
from \eqref{Yn_1Est}. This completes the proof. 
\end{proof}

\section{Completing the proof of \thmref{main}}\seclab{boot}
In order to complete the proof of \thmref{main}, we now apply a bootstrapping step: Let $\phi=\sum_{x=2}^\infty \phi_n x^n\in x^2 \mathbb C[[x]]$ denote the formal series expansion of the center manifold of \eqref{System0} and write 
\begin{align*}
 f(x,y)  :=x^2 f_0(x)+x^2 y^2 f_2(x,y).
\end{align*}
We then consider the composition $f(x,\phi(x))\in x^2 \mathbb C[[x]]$ as a formal series and define $p_n=p_n(\phi)$ by
 \begin{align}
 f(x,\phi(x))=:\sum_{k=2}^\infty p_n x^n. \eqlab{pn}
 \end{align}
 The coefficients $p_n$ can be expressed in terms of the coefficients $\phi_n$ of $\phi$ upon expanding $f_0$ and $f_2$ in convergent power series and by using Cauchy's product formula, see \eqref{pnexpr} below.
 
From \lemmaref{mk0linearcase}, we conclude that $\phi_n$ satisfies
\begin{align}
 \phi_n = (-1)^n \Gamma(n+a) S_n\quad\mbox{where}\quad S_n:=\sum_{j=2}^n \frac{(-1)^j p_j}{\Gamma(j+a)}\quad \forall\,n\ge 2.\eqlab{fixpointphin}
\end{align}
In essence this gives a fix-point formulation of $\phi_n$ and this was used in \cite{euler} and \cite{MR4445442} to prove \thmref{main} under further assumptions. The following proposition uses \eqref{fixpointphin} to bootstrap the estimate \eqref{phinest}, repeated here for convenience:
\begin{align}
 \vert \phi_n\vert \le C \Gamma(n+1+a)\quad \forall\,n\ge 2.\eqlab{phiest2}
\end{align}
\begin{proposition}\proplab{final}
Suppose that \eqref{phiest2} holds true.
Then the series $S_\infty:=\sum_{j=2}^\infty \frac{(-1)^j p_j}{\Gamma(j+a)}$ is absolutely convergent and
\begin{align*}
 \frac{(-1)^n \phi_n}{\Gamma(n+a)}\rightarrow S_\infty.
 \end{align*}

\end{proposition}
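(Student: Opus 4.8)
The plan is to show that the tail of the series defining $S_\infty$ is small using the a priori bound \eqref{phiest2}, and then to analyze the difference $\frac{(-1)^n\phi_n}{\Gamma(n+a)}-S_n$. Since $\phi_n=(-1)^n\Gamma(n+a)S_n$ by \eqref{fixpointphin}, the quantity $\frac{(-1)^n\phi_n}{\Gamma(n+a)}$ equals $S_n$ exactly, so the whole statement reduces to proving that $S_n=\sum_{j=2}^n\frac{(-1)^jp_j}{\Gamma(j+a)}$ converges as $n\to\infty$. Thus the real content is: the series $\sum_{j\ge2}\frac{(-1)^jp_j}{\Gamma(j+a)}$ is absolutely convergent. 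For this I need a geometric bound $|p_j|\le A T^{j}$ on the Taylor coefficients of the composed series $f(x,\phi(x))$, after which Stirling's formula \eqref{stirling0} (giving $\Gamma(j+a)$ super-exponential growth) makes the majorant series $\sum_j A T^j/\Gamma(j+a)$ converge, exactly as in the $y$-independent case treated right after \lemmaref{mk0linearcase}.

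The main step is therefore to establish $|p_j|\le AT^j$ from \eqref{phiest2}. The obstacle is that \eqref{phiest2} only gives $|\phi_n|\le C\Gamma(n+1+a)$, which grows like a factorial, so naively plugging $\phi$ into the convergent series for $f$ does not obviously produce a geometrically-bounded composition. The resolution is to pass to the Borel transform: from $|\phi_n|\le C\Gamma(n+1+a)$ we have, writing $\phi_n=(n-1)!\,\Phi_{n-1}$, that $|\Phi_{n-1}|\le C\,\Gamma(n+1+a)/(n-1)!$, i.e. $\Phi$ has only polynomially-weighted exponential-type growth (precisely the growth captured by membership in a space like $\mathcal G_\epsilon$, for every $\epsilon>0$). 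Equivalently, $\phi$ is Gevrey-$1$ with arbitrarily small type constant $T$. Since $f_0$ and $f_2$ are analytic (their Taylor/Borel coefficients decay geometrically) and the Borel transform turns products into convolutions which are continuous on $\mathcal G_\epsilon$ with the estimate \eqref{convest} contributing a factor $\propto\epsilon$, the Borel transform of $f(\cdot,\phi(\cdot))$ again lies in every $\mathcal G_\epsilon$; translating back, $f(x,\phi(x))$ is Gevrey-$1$ with type constant as small as we like. But $f(x,\phi(x))=x^2f_0(x)+x^2\phi(x)^2f_2(x,\phi(x))$ is, term by term, determined by finitely many $\phi_k$ for each coefficient $p_n$ — more precisely, one writes the explicit Cauchy-product expression \eqref{pnexpr} and bounds it. Using $|\phi_k|\le C\Gamma(k+1+a)$ together with the geometric bounds on the coefficients of $f_0,f_{2,l}$, one checks that the sum defining $p_n$ telescopes to a bound of the form $A^{n}$ times a product of Gamma factors that, after division by $\Gamma(n+a)$, is summable; the cleanest route is the Borel-side argument just sketched, where the $\epsilon$-gain in \eqref{convest} kills the factorial growth.

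Concretely I would carry out the following steps. First, record that \eqref{fixpointphin} gives the identity $\frac{(-1)^n\phi_n}{\Gamma(n+a)}=S_n$, so the claim is equivalent to convergence of $(S_n)$. Second, from \eqref{phiest2} deduce that the Borel transform $\Phi=\mathcal B(\phi)$ satisfies $|\Phi_{n-1}|\le C\Gamma(n+1+a)/(n-1)!$, hence $\Phi\in\mathcal G_\epsilon$ for every $\epsilon>0$ (its $\Wert\cdot\Wert$ or $\Vert\cdot\Vert_\epsilon$ norm is finite for all $\epsilon$ by the same Stirling estimate used in \lemmaref{Fest}). Third, apply the convolution estimate \eqref{convest} (and the geometric decay $\Wert F_{2,l}\Wert\le AT^{l-1}$ from \lemmaref{Fest}) to the series $\sum_{l\ge2}F_{2,l}\star\Phi^{\star l}$ plus $F_0$, i.e. to $\mathcal B(f(\cdot,\phi(\cdot)))$, concluding it lies in $\mathcal G_\epsilon$ for every $\epsilon>0$; equivalently $|p_j|\le A_\epsilon\,\epsilon^{-j}(j-1)!$ — in particular $|p_j|\le A T^j$ for arbitrarily small $T$. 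Fourth, estimate
\begin{align*}
 |S_\infty-S_n|\le\sum_{j=n+1}^\infty\frac{|p_j|}{\Gamma(j+a)}\le A\sum_{j=n+1}^\infty\frac{T^j}{\Gamma(j+a)},
\end{align*}
and invoke Stirling's formula \eqref{stirling0} to see the right-hand side is the tail of a convergent series, hence tends to $0$ as $n\to\infty$; the same computation with $n=1$ shows $S_\infty$ is absolutely convergent. This gives $S_n\to S_\infty$ and completes the proof. The hard part is Step 3 — making precise that the factorial growth allowed by \eqref{phiest2} does not destroy the geometric bound on the composed coefficients $p_j$; the $\epsilon$-factor in \eqref{convest} is exactly what saves it, since one may take $\epsilon$ as small as needed after the a priori estimate is in hand.
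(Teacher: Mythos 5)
Your reduction is fine: by \eqref{fixpointphin} one has $\frac{(-1)^n\phi_n}{\Gamma(n+a)}=S_n$ identically, so the proposition is equivalent to absolute convergence of $\sum_{j\ge 2}\frac{(-1)^jp_j}{\Gamma(j+a)}$, and your final Stirling step would then close the argument. The gap is in Step 3: the claimed bound $|p_j|\le AT^j$ is false in general, and the Borel-side argument you sketch cannot produce it. The composed series $f(x,\phi(x))$ is divergent whenever $\phi$ is: for instance with $f(x,y)=bx^2+y^2$ one has $p_n=b\delta_{n2}+\sum_{i=2}^{n-2}\phi_i\phi_{n-i}$, and if $S_\infty\ne 0$ and $\phi_2\ne 0$ the dominant contribution $2\phi_2\phi_{n-2}$ already grows like $\Gamma(n+a-2)$, i.e.\ factorially. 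What membership of $\mathcal B(f(\cdot,\phi(\cdot)))$ in $\mathcal G_\epsilon$ (or $\mathcal H$) actually yields, via Cauchy estimates on discs of radius $\rho<1$, is a Gevrey-1 bound $|p_j|\lesssim (j-1)!\,\rho^{-j}$ whose type is dictated by the singularity of the Borel transform at $w=-1$; since $\rho^{-1}>1$, the quotient $|p_j|/\Gamma(j+a)\sim \rho^{-j}j^{-a}$ is not summable, so this route does not suffice. The $\epsilon$-factor in \eqref{convest} makes norms small but does not remove the factorial growth of coefficients; likewise, the assertion that \eqref{phiest2} means "$\phi$ is Gevrey-1 with arbitrarily small type" is incorrect — \eqref{phiest2} gives type exactly $1$ (coefficients $\phi_n/(n-1)!\sim n^{a+1}$), consistent with a singularity at distance $1$ in the Borel plane.

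The mechanism that actually makes $\sum_j |p_j|/\Gamma(j+a)$ converge is not smallness of type but a \emph{shift in the Gamma index}, coming from the structure $f(x,y)=x^2f_0(x)+x^2y^2f_2(x,y)$ together with $\phi\in x^2\mathbb C[[x]]$. This is the paper's route: writing $p_n$ via \eqref{pnexpr}, one proves by induction on $l$, using the convolution estimate \eqref{convolutionwkest} (and then \eqref{rhowkest}, \eqref{xiwkest}), that $|(\phi^l)_n|\le K^l\,\Gamma(n+1+a-2(l-1))$, whence $|p_n|\le K\Gamma(n+a-3)$, see \eqref{proppn}. This is still factorial growth, but shifted down by three units relative to $\Gamma(n+a)$, so by \eqref{stirling} the majorant is $O(n^{-3})$ and $S_\infty$ converges absolutely. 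To repair your proof you would need to replace the geometric bound of Step 3 by such a shifted factorial bound; nothing in your convolution/norm argument as written produces that gain.
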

\begin{proof}
Given \eqref{phiest2}, we will show that that there exists a $K>0$ such that 
\begin{align}
 \vert p_n\vert \le K \Gamma(n+a-3)\quad \forall\, n\ge 2,\eqlab{proppn}
\end{align}
recall \eqref{pn}.
Notice that \eqref{proppn} together with \eqref{fixpointphin} implies the result  since $$\vert S_\infty\vert \le \overline K\sum_{j=2}^\infty \frac{1}{j^3}<\infty,$$ for $\overline K>0$ large enough. Here we have used \eqref{stirling} \rspp{(with $x=j+a>j$ and $b=-3$)}.

To show \eqref{proppn}, we follow \cite[\rsp{Section 4}]{euler} and write $f$ as a convergent series
\begin{align*}
f(x,y) = \sum_{n=2}^\infty f_{0,n} x^n+\sum_{l=2}^\infty \sum_{n=2}^\infty f_{2,n,l} x^n y^l,
\end{align*}
where
\begin{align}
 \vert f_{0,n}\vert \le A T^{n-2},\quad \vert f_{2,n,l}\vert \le A T^{n+l-4},\quad \forall\,n\ge 2,l\ge 2,\eqlab{f02nbound}
\end{align}
for some $A,T>0$. 
Then it is a simple calculation, using Cauchy's product formula, to show the following: $p_n=f_{0,n},\forall\,n=2,\ldots,5$ and
\begin{align}
 p_n = f_{0,n}+\sum_{l=2}^{\lfloor \frac{n-2}{2}\rfloor} \sum_{k=2l}^{n-2} f_{2,n-k,l} (\phi^l)_k\quad \forall\, n\ge 6,\eqlab{pnexpr}
\end{align}
recall \eqref{pn}.
Here $(\phi^l)_n$ is defined by 
\begin{align*}
 \phi(x)^l =:\sum_{n=2l}^\infty (\phi^l)_n x^n \in x^{2l}\mathbb C[[x]]\quad \forall\,l\ge 2.
\end{align*}
For simplicity, we now write
\begin{align}\eqlab{wkdefn}
\Gamma_n: =\Gamma(n+1+a)\quad \forall\, n\ge 2,
\end{align}
and claim that 
\begin{align}
 \vert (\phi^l)_n\vert \le C^l \overline C^{l-1} \Gamma_{n-2(l-1)}\quad \forall\,n\ge 2l.\eqlab{philnest}
\end{align}
Here $\overline C=\overline C(a+1)$ and $C$ are the constants from \eqref{convolutionwkest} and \eqref{phiest2}, respectively.
The proof of \eqref{philnest} is by induction on $l$ with $l=2$ being the base case, where
\begin{align*}
 \vert (\phi^2)_n\vert\le C^2 \sum_{k=2}^{n-2} \Gamma_{k}\Gamma_{n-k}\le C^2 \overline C \Gamma_{n-2},
\end{align*}
using \eqref{convolutionwkest}, \eqref{phiest2} and Cauchy's product formula. For the induction step, we suppose that \eqref{philnest} holds true for $l\rightarrow l-1$ and write $ (\phi^l)_n = \sum_{k=2(l-1)}^{n-2} (\phi^{l-1})_k \phi_{n-k}$ so that
\begin{align*}
 \vert (\phi^l)_n\vert&\le C^{l}\overline C^{l-2} \sum_{k=2(l-1)}^{n-2} \Gamma_{k-2(l-2)} \Gamma_{n-k}\\
 &= C^{l}\overline C^{l-2} \sum_{k=2}^{n-2(l-1)} \Gamma_{k} \Gamma_{n-2(l-2)-k}\\
 &\le C^l \overline C^{l-1} \Gamma_{n-2(l-1)},
\end{align*}
as desired. Here we have used \eqref{convolutionwkest} and \eqref{phiest2} again.

We may assume that $\overline C>1$ and therefore put $K=C \overline C$ so that 
\begin{align*}
 \vert (\phi^l)_n\vert \le K^l \Gamma_{n-2(l-1)}\quad \forall\,n\ge 2l.
\end{align*}
We now estimate $p_n$, $n\ge 6$, in \eqref{pnexpr}. We have 
\begin{align*}
\vert p_n\vert \le A T^{n-2} +AK^{2} \sum_{l=2}^{\lfloor \frac{n-2}{2}\rfloor}(KT)^{l-2} \sum_{k=2l}^{n-2} T^{n-2-k} \Gamma_{k-2(l-1)},
\end{align*}
using \eqref{f02nbound} and \eqref{philnest}. We then use \eqref{rhowkest} to estimate the inner sum
\begin{align*}
 \sum_{k=2l}^{n-2}  T^{n-2-k} \Gamma_{k-2(l-1)} = \sum_{k=2}^{n-2l} T^{n-2l-k} \Gamma_{k}\le \overline C \Gamma_{n-2l}\quad \forall \,n\ge 2l+2,
\end{align*}
where $\overline C=\overline C(a,T)>0$. 
Consequently, 
\begin{align*}
\vert p_n\vert \le A T^{n-2} +A\overline C K^2 \sum_{l=2}^{\lfloor \frac{n-2}{2}\rfloor}(KT)^{l-2} \Gamma_{n-2l}.
\end{align*}
The remaining sum on the right hand side can then be estimated by \eqref{xiwkest}:
\begin{align*}
\vert p_n\vert \le A T^{n-2} +AK^{2} \overline C^2  \Gamma_{n-4}\quad \forall\,n\ge 6.
\end{align*}
upon increasing $\overline C>0$ if necessary. Therefore \eqref{proppn} follows upon using \eqref{wkdefn}. This completes the proof. 
\end{proof}
The property \eqref{mnasymp}  now follows from \propref{final} and this therefore completes the proof of \thmref{main}.

\section{\rsp{Analytic center manifolds for Riccati equations}}\seclab{riccati}
\rsp{In this section, we consider an application of our results to the family of Riccati equations:
\begin{align}
 x^2 \frac{dy}{dx} +(1+a x) y = b x^2+ y^2,\eqlab{riccati}
\end{align}
with $a,b\in \mathbb R$. We are interested in analytic center manifolds.
\begin{remark}
 Notice in comparison with \eqref{riccati0} that we have put $c=1$ in \eqref{riccati}. The reason is the following: The case $c=0$ in \eqref{riccati0} is covered by \lemmaref{mk0linearcase} and we therefore only have to consider $c\ne 0$. But in this case, we can reduce \eqref{riccati0} to \eqref{riccati} by setting $b=c^{-1} \tilde b$ and $y=c^{-1} \tilde y$ and then removing the tildes.  
\end{remark}

The following result provides a local characterization of analytic center manifolds of \eqref{riccati} with $(a,b)$ in a neighborhood of $(-2,0)$.

\begin{proposition}\proplab{riccati}
 Consider \eqref{riccati} with parameters $(a,b)\in \mathbb R^2$. Then there is a neighborhood $U\subset \mathbb R^2$ of 
 \begin{align}
  (a,b) = (-2,0),\eqlab{tr}
 \end{align}
 in the $(a,b)$-parameter space, 
 such that the formal series expansion of the center manifold $\phi=\sum_{n=2}^\infty \phi_n x^n \in x^2 \mathbb C[[x]]$ of \eqref{riccati} with $(a,b)\in U$, is convergent if and only if either of the following conditions hold:
 \begin{enumerate}
  \item \label{item1} $b=0$;
  \item \label{item2}
  \begin{align}
 a = -b-2.\eqlab{f11}
\end{align}
  
 \end{enumerate}
 
\end{proposition}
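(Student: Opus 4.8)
The plan is to study the Riccati equation \eqref{riccati} near the parameter value $(a,b)=(-2,0)$, where $\phi\equiv 0$ is a (trivially convergent) center manifold, and to use \thmref{main} together with an explicit computation of the first nontrivial coefficients of the quantity $S_\infty$ as a function of $(a,b)$. The key observation is that by \thmref{main}, $\phi_n=(-1)^n\Gamma(n+a)S_n$ with $S_n\to S_\infty$, so that (by \lemmaref{mk0linearcase} applied to the $y$-linear situation, or rather by \cite[Lemma 2.4]{euler} cited in the introduction) in the $y$-linear case $S_\infty=0$ is equivalent to convergence; for the full Riccati equation one direction ($S_\infty\neq0\Rightarrow$ divergence) is immediate from \thmref{main}, while the converse is exactly the content of Question~I, which must be settled in this special family. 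I would therefore split the proof into: (i) showing that $b=0$ or $a=-b-2$ imply convergence directly (by exhibiting a convergent center manifold), and (ii) showing that on a punctured neighborhood of $(-2,0)$ avoiding these two curves, $S_\infty\neq0$, hence divergence.

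For step (i), when $b=0$ the equation \eqref{riccati} has $y\equiv0$ as an invariant center manifold, which is analytic. When $a=-b-2$, the idea is to look for a polynomial (hence analytic) solution; substituting $y=\beta x$ into \eqref{riccati} gives $x^2\beta=-(1+ax)\beta x+bx^2+\beta^2x^2$, i.e. matching powers of $x$: the $x^1$-term forces $\beta=\beta$ trivially but the $x^2$-coefficient gives $\beta=-a\beta+b+\beta^2$, and one checks that $\beta$ with $(1+a)\beta=b+\beta^2$... more carefully, the natural ansatz near $(-2,0)$ is $y=\phi_2x^2+\dots$; I would instead verify that the condition $a+b+2=0$ is precisely the resonance condition under which an analytic solution branch through the origin exists, by computing $\phi_2,\phi_3$ from the recursion and checking that the obstruction to summability vanishes. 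Concretely, from \eqref{recmk0}-type recursion for \eqref{riccati}: $\phi_n+(n-1+a)\phi_{n-1}=b\,\delta_{n,2}+(\phi^2)_n$ with $(\phi^2)_n=\sum_{k=2}^{n-2}\phi_k\phi_{n-k}$; this gives $\phi_2=b/(1+a)$... wait, $\phi_2+(1+a)\phi_1=b$ and $\phi_1=0$ so $\phi_2=b$ (with the normalization $a$ not yet shifted — near $a=-2$ this is the unshifted equation), then $\phi_3+(2+a)\phi_2=0$ so $\phi_3=-(2+a)b$, then $\phi_4+(3+a)\phi_3=\phi_2^2$ so $\phi_4=b^2+(3+a)(2+a)b$, and so on.

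For step (ii), the main computation is to expand $S_\infty=S_\infty(a,b)$ to sufficient order near $(-2,0)$ and show its zero set is exactly $\{b=0\}\cup\{a+b+2=0\}$ locally. Since $S_\infty=\sum_{j\ge2}(-1)^jp_j/\Gamma(j+a)$ with $p_j$ the coefficients of $f(x,\phi(x))=bx^2+\phi(x)^2$, and $\phi$ itself depends on $(a,b)$, I would treat $b$ as a small parameter: for $b=0$ one has $\phi\equiv0$ and $S_\infty=0$, so $S_\infty=b\,\sigma_1(a)+b^2\sigma_2(a)+O(b^3)$ for analytic functions $\sigma_i$, and the factorization $b=0$ is visible as the overall factor $b$. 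The content is then that $\sigma_1(a)/(\text{something})$ and the next correction combine so that the remaining factor vanishes (to the relevant order) exactly along $a+b+2=0$; computing $p_2=b$, $p_3=0$ (since $\phi^2=O(x^4)$), $p_4=\phi_2^2=b^2$, $p_5=2\phi_2\phi_3=-2(2+a)b^2$, etc., one gets $S_\infty=\tfrac{b}{\Gamma(2+a)}-\tfrac{b^2}{\Gamma(4+a)}+\tfrac{2(2+a)b^2}{\Gamma(5+a)}+O(b^3)$; near $a=-2$ the factor $1/\Gamma(2+a)\to\infty$ (pole!) which is why the unshifted form is singular — this is precisely why the normal-form reduction of \secref{nf} with $M=2$, $a_M=a+2$ must be invoked, so that the relevant invariant is computed in the shifted equation \eqref{System0} where $a_M\ge2$, and $S_\infty$ is a genuine analytic function of $(a,b)$ near $a_M=0$, i.e. $(a,b)=(-2,0)$ becomes $(a_M,b)=(0,0)$ — here $1/\Gamma(a_M)$ has a zero, not a pole, so the leading behaviour of $S_\infty$ changes and the curve $a+b+2=0$ emerges as a second branch of the zero set.

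\textbf{The main obstacle} will be step (ii): one must either resolve Question~I affirmatively for this Riccati family (proving $S_\infty=0\Rightarrow$ convergence), or circumvent it. The honest route, consistent with the abstract's phrasing ``partial classification'' and ``combine the theory with numerical computations,'' is that the \emph{analytic} direction (convergence $\Leftarrow$ conditions 1,2) is proved rigorously via the explicit solutions in step (i), while the converse (divergence off these curves) is reduced to showing $S_\infty\neq0$ there — and that non-vanishing of $S_\infty(a,b)$ on the punctured neighborhood is where a numerical/finite-order Taylor argument enters: one computes the first few $\sigma_i(a)$ explicitly (rigorously, these are rational expressions in $a$ and values of $1/\Gamma$), verifies the factorization into $b\cdot(a+b+2)\cdot(\text{unit})$ up to order $O(b^3)$ or $O((a+b+2)^2)$, and then the shrinking of the neighborhood $U$ absorbs all higher-order corrections. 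The delicate point is making the ``unit'' claim precise: one needs the remainder of $S_\infty$ after factoring out $b(a+b+2)$ to be bounded away from zero on $U$, which requires a uniform estimate on $\|\phi\|$ and hence on the $p_j$ as $(a,b)$ ranges over $U$ — this is supplied by \lemmaref{borellaplaceest} and the Borel–Laplace machinery of \secref{adapt}, with the $\epsilon$-dependence controlled uniformly in the (compact) parameter neighborhood.
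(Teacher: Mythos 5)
Your overall strategy is the one the paper uses: prove convergence on the two lines directly, and show $S_\infty(a,b)\neq 0$ off them near $(-2,0)$ so that \thmref{main} gives divergence (so, contrary to your opening paragraph, Question I never has to be settled). But two of your steps, as described, would fail. First, you never actually prove convergence along \eqref{f11}: the ansatz $y=\beta x$ is impossible (the center manifold lies in $x^2\mathbb C[[x]]$), and your fallback of ``checking that the obstruction to summability vanishes'' only yields $S_\infty=0$, which is exactly what cannot be upgraded to convergence without answering Question I. The paper instead exhibits the solution in closed form: with $a=-b-2$ the recursion you wrote down gives $\phi_n=b^{n-1}$, i.e. $\phi(x)=bx^2/(1-bx)$, a convergent geometric series. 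Your own values $\phi_2=b$, $\phi_3=-(2+a)b$, $\phi_4=b^2+(3+a)(2+a)b$ reduce to $b,b^2,b^3$ on the line, but you never identify or verify this pattern, so the ``if'' direction for item \ref{item2} is missing.

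Second, your route to the ``only if'' part --- compute $S_\infty$ to order $O(b^3)$, check the factor $b(a+b+2)$ to that order, and let a shrinking $U$ ``absorb all higher-order corrections'' --- is not a valid argument: a function such as $b(a+b+2)+b^3$ agrees with $b(a+b+2)$ to that order, yet its zero set near $(-2,0)$ is $\{b=0\}$ together with the curve $a=-b-2-b^2$, which is not the line \eqref{f11} however small the neighborhood. What makes the \emph{exact} factorization $S_\infty(a,b)=b(a+b+2)Q(a,b)$ true is that $S_\infty$ vanishes identically on both lines: on $b=0$ because $\phi\equiv 0$, and on \eqref{f11} because the explicit analytic solution above forces $(-1)^n\phi_n/\Gamma(n+a)\to 0$. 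The paper first rewrites \eqref{Sinfty0} using $\Gamma(z+1)=z\Gamma(z)$ so the series starts at $n=3$ and is analytic in $(a,b)$ for $a>-3$ (note that $1/\Gamma(2+a)$ has a zero at $a=-2$, not a pole as you first wrote), deduces the factorization from this identical vanishing, and then needs only one nondegeneracy check, $Q(-2,0)\neq 0$, obtained from $\partial_b^2 S_\infty(-2,0)$ via $\partial_b\phi_n(-2,0)=\delta_{n2}$, giving $1/\Gamma(2)$. No uniform-in-parameter Borel--Laplace estimates and no numerics are needed for \propref{riccati} itself (the numerics only concern the global picture). A small slip besides: the $p_4$-term in your expansion should enter as $+b^2/\Gamma(4+a)$, since $(-1)^4p_4=\phi_2^2=b^2$.
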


The result is illustrated in \figref{riccati}. We prove \propref{riccati} in \secref{riccati_proof}. Subsequently, in \secref{riccati_num}, we compute $S_\infty$ numerically and make additional (global) observations on analytic center manifolds of \eqref{riccati}. 

\subsection{Proof of \propref{riccati}}\seclab{riccati_proof}
The idea of the proof is to study the equation $S_\infty(a,b)=0$. Here $S_\infty$ is the quantity from \thmref{main}. 
Clearly, 
if $b=0$ then $y(x)=0$ is an analytic center manifold. Moreover, if \eqref{f11} holds then $\phi$ is a geometric series:
\begin{align*}
 \phi(x) =  \sum_{n=2}^\infty b^{n-1} x^n = \frac{b x^2}{1-b x},
\end{align*}
convergent for all $0\le \vert x\vert <\vert b\vert^{-1}$.
To see this, we can either insert $\phi(x)$ directly into \eqref{riccati} or alternatively we can insert
\begin{align}
 \phi_n = b^{n-1},\quad n\ge 2,\eqlab{phin2}
\end{align}
into the recursion relation: 
\begin{align}\eqlab{phineqn}
\phi_n +(n-1+a)\phi_{n-1}  &=p_n,\quad 
 p_n :=b\delta_{n2}+ \sum_{i=2}^{n-2} \phi_i \phi_{n-i},
\end{align}
see \eqref{recmk0}, with $a$ given by \eqref{f11}.
Here $p_n$ is defined by $f(x,\phi(x))=:\sum_{n=2}^\infty p_n x^n\in x^2\mathbb C[[x]]$ (as in \secref{boot}) with $f(x,y)=b x^2+y^2$. In particular, the expression in \eqref{phineqn} follows from Cauchy's product formula:
\begin{align*}
 \phi(x)^2 = \sum_{n=4}^\infty \left(\sum_{i=2}^{n-2} \phi_i \phi_{n-i}\right) x^n\in x^4 \mathbb C[[x]].
\end{align*}
Moreover, $\delta_{ij}$ denotes Kronecker's delta:
\begin{align*}
 \delta_{ij} = \begin{cases}
                1 & i=j\\
                0 & i\ne j               \end{cases}\quad i,j\in \mathbb N.
\end{align*}
This proves the if part of \propref{riccati}. 

\begin{figure}[h!]
\begin{center}
{\includegraphics[width=.4\textwidth]{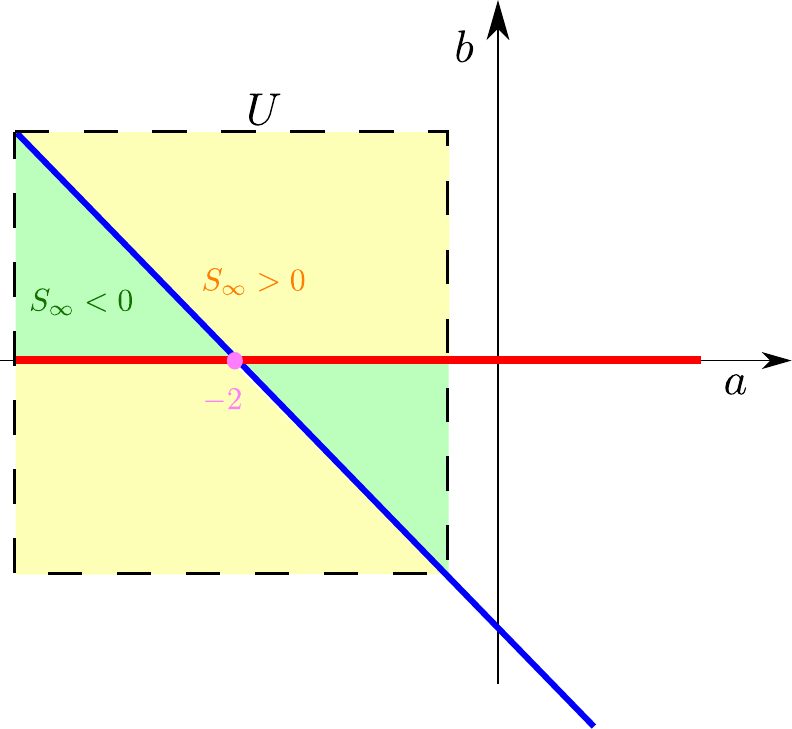}}
\end{center}
\caption{\rsp{Illustration of the result in \propref{riccati}. In a neighborhood of the point \eqref{tr}, $S_\infty^{-1}(0)$ is the union of the two intersecting branches in red and blue (corresponding to items \ref{item1} and \ref{item2} of \propref{riccati})}.}
\figlab{riccati}
\end{figure}

Working locally near \eqref{tr}, we may assume that $a>-3$. Then we can use \eqref{Sinfty0} to write $S_\infty=S_\infty(a,b)$, $(a,b)\in U$, as the uniformly  convergent infinite series:
\begin{equation}\eqlab{Sinfty1}
\begin{aligned}
 S_\infty& = \sum_{n=3}^{\infty} \frac{(-1)^n (-b(a+2)\delta_{n3} + p_n)}{\Gamma(n+a)}\\
 &=\sum_{n=3}^{\infty} \frac{(-1)^n (-b(a+2)\delta_{n3} + \sum_{i=2}^{n-2} \phi_i \phi_{n-i})}{\Gamma(n+a)}.
\end{aligned}
\end{equation}
Here we have used  \eqref{Gamma} and $p_2=b$ in the first equality and \eqref{phineqn} to express $p_n$, $n\ge 3$, in terms of $\phi_n=\phi_n(a,b)$ in the second equality. 
Moreover, directly from the definition \eqref{mnasymp}, we have
\begin{align}
 S_\infty(a,0) = 0\quad \mbox{and} \quad S_\infty(-b-2,b)=0,\eqlab{branches}
\end{align}
corresponding to the items \ref{item1} and \ref{item2} in \propref{riccati},
for all $a\in \mathbb R$ and all $b\in \mathbb R$, respectively.
Therefore, at the point \eqref{tr}, two branches of the preimage $S_\infty^{-1}(0)$ intersect transversally in the $(a,b)$-plane, see \figref{riccati}. In the sequel, we
will prove the following claim:

\begin{assertion}\asslab{ass1} {$S_\infty^{-1}(0)\cap U$ consists entirely of the two branches corresponding to the items \ref{item1} and \ref{item2} in \propref{riccati}. Here $U$ is a sufficiently small neighborhood of the point \eqref{tr}.} 
\end{assertion}

The only if part of \propref{riccati} then follows since $\phi\in x^2\mathbb C[[x]]$ is nonanalytic if $S_\infty^0(a,b) \ne 0$. 
In order to prove \assref{ass1}, we use that $S_\infty$ \eqref{Sinfty1} depends analytically on the parameters $(a,b)\in \mathbb R^2$ and subsequently \eqref{branches} to conclude that
\begin{align*}
 S_\infty(a,b) = b(a+b+2)Q(a,b),
\end{align*}
for some analytic function $Q=Q(a,b)$. 
%
The following lemma then proves the \assref{ass1}.
\begin{lemma}
\begin{align*}
Q(-2,0) &=\frac{1}{\Gamma(2)}.
\end{align*}
\end{lemma}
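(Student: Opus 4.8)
The plan is to read off $Q(-2,0)$ from the coefficient of $b^1$ in the analytic function $S_\infty(a,b)$ near $b=0$. Since $S_\infty(a,b)=b(a+b+2)Q(a,b)$ with $Q$ analytic on $U$, expanding the right-hand side in powers of $b$ gives
\begin{align*}
 \partial_b S_\infty(a,b)\big|_{b=0}=(a+2)Q(a,0).
\end{align*}
Hence it suffices to compute the $b^1$-coefficient of the series \eqref{Sinfty1}, divide by $(a+2)$, and invoke continuity of $Q$ as $a\to -2$.

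The key observation is that $\phi_i(a,0)=0$ for every $i\ge 2$. This follows by induction from the recursion \eqref{phineqn}: $\phi_2=p_2=b$ vanishes at $b=0$, and then $p_n(a,0)=\sum_{i=2}^{n-2}\phi_i(a,0)\phi_{n-i}(a,0)=0$ forces $\phi_n(a,0)=0$; in fact each $\phi_n$ is a polynomial in $(a,b)$ divisible by $b$. Consequently every product $\phi_i\phi_{n-i}$ occurring in \eqref{Sinfty1} is $O(b^2)$, so the entire double sum contributes nothing at order $b^1$. The only order-$b$ contribution comes from the explicit term $-b(a+2)\delta_{n3}$, i.e.\ from $n=3$, whose $b$-coefficient is $\frac{(-1)^3(-(a+2))}{\Gamma(3+a)}=\frac{a+2}{\Gamma(3+a)}$. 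Therefore $\partial_b S_\infty(a,b)|_{b=0}=\frac{a+2}{\Gamma(3+a)}$, and comparing with the expression above yields $Q(a,0)=\frac{1}{\Gamma(3+a)}$ for $a\ne -2$ near $-2$; by continuity of $Q$ this persists at $a=-2$, so $Q(-2,0)=\frac{1}{\Gamma(1)}=\frac{1}{\Gamma(2)}$.

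The only point requiring a word of care is the legitimacy of extracting Taylor coefficients of $S_\infty$ term by term from the infinite series \eqref{Sinfty1} — in particular that the sum of the $O(b^2)$ pieces cannot perturb the $b^1$-coefficient. This is immediate from the uniform convergence of \eqref{Sinfty1} on $U$ and the analyticity of $S_\infty$ there, both already established, so there is no real obstacle. As an optional consistency check one may also evaluate the $b^2$-coefficient of $S_\infty$ at $a=-2$: writing $g_n(a):=\partial_b\phi_n(a,0)$, the recursion gives $g_2=1$ and $g_n(a)=-(n-1+a)g_{n-1}(a)$ for $n\ge3$, so $g_n(-2)=0$ for $n\ge 3$; the only surviving term of $\sum_{i=2}^{n-2}g_i(-2)g_{n-i}(-2)$ is $n=4$, $i=2$, contributing $\frac{1}{\Gamma(2)}=1$, which matches $Q(-2,0)(\alpha\beta+\beta^2)$ being the quadratic part of $S_\infty$ at $(a,b)=(-2,0)$.
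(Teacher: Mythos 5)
Your proof is correct, and it takes a route that differs from the paper's in a meaningful way. The paper proves the lemma by working at the single point $(a,b)=(-2,0)$ and computing the second $b$-derivative of the series \eqref{Sinfty1} there: it first solves the linearized recursion $\frac{\partial\phi_n}{\partial b}+(n-3)\frac{\partial\phi_{n-1}}{\partial b}=\delta_{n2}$ at $a=-2$ to get $\frac{\partial\phi_n}{\partial b}(-2,0)=\delta_{n2}$, and then reads off $Q(-2,0)$ from the $b^2$-coefficient (the only surviving term being $n=4$). You instead compute the \emph{first} $b$-derivative at general $a$ near $-2$, where only the vanishing $\phi_n(a,0)=0$ is needed (the quadratic terms are $O(b^2)$ and the explicit $-b(a+2)\delta_{n3}$ term does all the work), obtaining $\partial_b S_\infty(a,0)=\frac{a+2}{\Gamma(a+3)}$, hence $Q(a,0)=\frac{1}{\Gamma(a+3)}$ for $a\ne -2$, and you pass to $a=-2$ by continuity of $Q$. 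What your approach buys: it avoids the linearized recursion entirely (you only use it in your optional consistency check, which in fact reproduces the paper's argument), it yields the whole function $Q(a,0)=1/\Gamma(a+3)$ along $b=0$ rather than just its value at $a=-2$ (consistent with the paper's Remark~\remref{bif}, where $\partial_b S_\infty(a,0)=1/\Gamma(a+2)=(a+2)/\Gamma(a+3)$ is stated), and it sidesteps the factor-of-two bookkeeping in relating $Q(-2,0)$ to $\partial_b^2 S_\infty(-2,0)$ (strictly $Q(-2,0)=\tfrac12\partial_b^2 S_\infty(-2,0)$, a factor the paper's write-up silently absorbs against the factor $2$ arising from differentiating the products $\phi_i\phi_{n-i}$ twice). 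What the paper's approach buys is that it stays entirely at the degenerate point and never divides by $a+2$, so no continuity/limit argument is needed. Both routes rest on the same unproved-but-asserted facts (analyticity of $S_\infty$ in $(a,b)$ and the legitimacy of term-by-term extraction of Taylor coefficients from \eqref{Sinfty1}), which you correctly flag, so your proof is at the same level of rigor as the paper's.
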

\begin{proof}
Clearly, 
\begin{align*}
 Q(-2,0)=\frac{\partial^2 S_\infty}{\partial b^2}(-2,0).
 \end{align*}
 In the following, we compute the partial derivative by using \eqref{Sinfty1}. For this, we first determine $\frac{\partial \phi_n }{\partial b}(a,b)$ for \eqref{tr}. Upon using that $\phi_n=0$, we find from \eqref{phineqn} that 
 \begin{align*}
  \frac{\partial \phi_n }{\partial b} + \left(n-3\right)\frac{\partial \phi_{n-1} }{\partial b} = \delta_{n2},
 \end{align*}
We conclude that 
\begin{align}\eqlab{phinf20}
\frac{\partial \phi_{n} }{\partial b}(-2,0)=\delta_{n2}.
\end{align} 
In turn, upon using \eqref{Sinfty1}, we obtain that
\begin{align*}
 \frac{\partial^2 S_\infty}{\partial b^2}(-2,0) &= \sum_{n=3}^\infty \frac{(-1)^n \delta_{n4} }{\Gamma(n-2)} = \frac{1}{\Gamma(2)} \ne 0.
\end{align*}
\end{proof}


\begin{remark}\remlab{bif}
 Clearly,
 \begin{align*}
 \frac{\partial S_\infty}{\partial b}(a,0) = \frac{1}{\Gamma(a+2)},
 \end{align*}
 with simple roots at $a\in (-\mathbb N)\setminus\{-1\}$. By the implicit function theorem, branches of $S_\infty^{-1}(0)$ can therefore only bifurcate from $b=0$ at $a\in (-\mathbb N)\setminus\{-1\}$. However, we have not been able to describe the branches analytically. We therefore investigate the situation using numerics in the following section.  
\end{remark}

\begin{figure}[h!]
\begin{center}
\includegraphics[width=.4\textwidth]{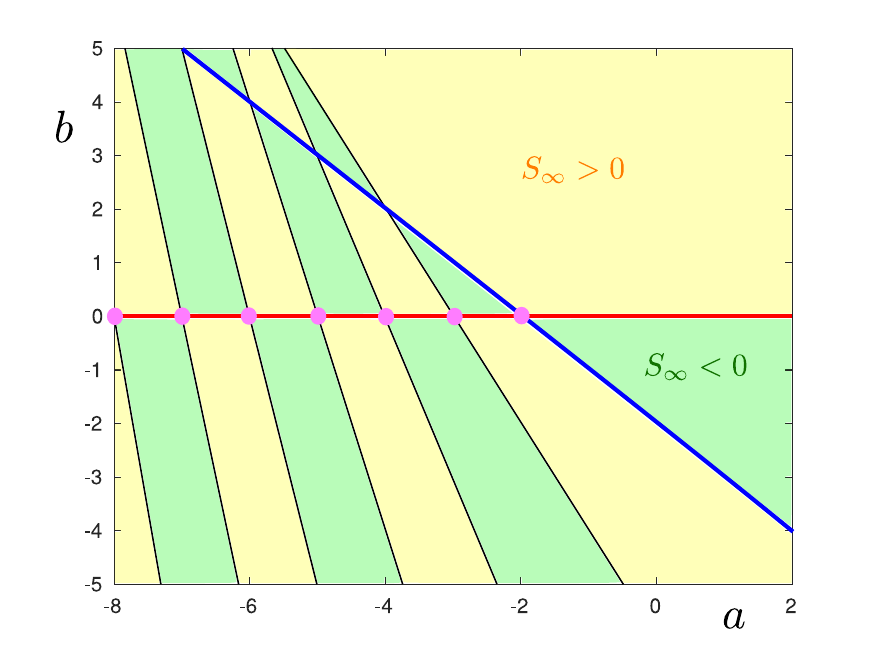}
\end{center}
\caption{\rsp{The sign of $S_\infty(a,b)$ determined by numerical computations (see the main text for details). Here $S_{\infty}>0$ is in yellow whereas $S_{\infty}<0$ is in green. The bifurcations $a\in (-\mathbb N)\setminus \{-1\}$, see \remref{bif}, are shown as purple discs. }}
\figlab{a4_4}
\end{figure}

\subsection{Numerical computations}\seclab{riccati_num}
In \figref{a4_4}, we show the results of numerical computations of $S_\infty$ for the family \eqref{riccati}. We indicate $S_{\infty}>0$ in yellow and $S_{\infty}<0$ in green. To determine $S_\infty$, we have used the approximation $S_\infty\approx S_{N}:=\frac{(-1)^N \phi_N}{\Gamma(N+a)}$ with $N=70$, computing $\phi_n$, $2\le n\le N$, recursively from the recursion relation \eqref{phineqn} for each value of $(a,b)$, and used $4000$ equidistributed grid points in the $(a,b)$-plane. Moreover, the plots were obtained using the contour-plot function in Matlab. In fact, we have used a smoothing of the boundaries where $S_{N}=0$ (black curves) to obtain nicer plots. We did not observe any changes when $N$ was moderately increased, but did notice that around $N=100$ round-off errors begin to influence the plot. We therefore settled on $N=70$.  We have also overlaid the branches in items \ref{item1} and \ref{item2} of \propref{riccati} in red and blue, respectively. We observe the following:
\begin{enumerate}
  \item The result in \propref{riccati} is truly local. There are different bifurcations of $S_\infty(a,b)=0$ emanating from $(a,b)=(-k,0)$ for all $k\in \mathbb N$, $k\ge 3$. These branches (black curves in \figref{a4_4}), that are transverse to $b=0$, are also straight lines in the $(a,b)$-plane but the negative slopes decrease, becoming more vertical as $k$ increasing. Based on the numerical findings, we are confident that these branches are given by 
 \begin{align}
  a=- \frac{1}{k-1}b-k.\eqlab{linek}
 \end{align}
 We did not observe any visible difference when overlaying these curves with \figref{a4_4}. 
 \item Seeing that the lines \eqref{linek} have different slopes, we obtain additional bifurcations for $b>0$ where the lines corresponding to different values of $k$ intersect.
 \item We do not observe further bifurcations for $b\ne 0$ to the right of ``right envelope'' of the branches of \eqref{linek}. We therefore conjecture the following: \textit{The center manifold of \eqref{riccati} is nonanalytic for all $(a,b)\,:\,(a>-\frac{1}{k-1}b-k$ for all $k\in \mathbb N \setminus \{1\})$ and $b\ne 0$}.
\end{enumerate}
I believe that it would be interesting to understand $\phi=\sum_{n=2}^\infty \phi_n x^n\in x^2\mathbb C[[x]]$ along the curves \eqref{linek}. Notice that with $a$ given by \eqref{linek}, then the recursion relation for $\phi_n$ takes the following form:
\begin{align*}
 \phi_n +\left(n-1-\frac{1}{k-1}b-k\right)\phi_{n-1} = b\delta_{n2} +  \sum_{i=2}^{n-2} \phi_i \phi_{n-i}.
\end{align*}
For $k=2$, we know from above that $\phi_n = b^{n-1}$, recall \eqref{phin2}. We expect that it is also possible to solve the recursion for general $k>2$, at least to the level where the convergence of $\phi$ can be determined. However, we will leave further analysis of analytic center manifolds of \eqref{riccati} to future work. 
}

\section{Discussion}\seclab{discuss}

In this paper, we have provided a precise statement about the growth of the coefficients of the formal series expansion of the center manifold of saddle-nodes (in the case of Poincar\'e rank $k=1$). This result, which connects the growth directly with the formal analytic invariant $a$ of the saddle-node, was proven in \cite{MR4445442} for a specific system (while their approach was more general the authors did rely on the specific structure in some places) and in \cite{euler} for a restricted class of systems. Our approach is novel and based on a combination of Borel-Laplace (\rspp{extending} the Banach space framework of \cite{bonckaert2008a}) and the fix-point formulation for the coefficients $\phi_n$, see \eqref{fixpointphin} (which was used by \cite{euler,MR4445442}). In our work, the Borel-Laplace approach does not provide the desired bound $\vert\phi_n\vert \le C\Gamma(n+a)$ directly. Instead we used a subsequent step that bootstraps the bound provided by Borel-Laplace, see \lemmaref{borellaplaceest} and \propref{final}. For experts in applied exponential asymptotics,  Darboux's theorem (in the formulation of Dingle, see \cite{dingle1973a} and \cite[Section 3.1]{crew2024a} for a review) would perhaps appear to be a more direct route to $\vert\phi_n\vert \le C\Gamma(n+a)$. However, for a rigorous construction we believe that this requires a more  detailed analysis of $\Phi$ in $\mathbb C\backslash (-\infty,0]$.  

\rsp{At present, we do not know whether $S_\infty=0$ is sufficient for the center manifold to be analytic in general, recall \textbf{Question I} from the introduction. We have only shown in \propref{riccati} that this was the case locally near \eqref{tr} for the family \eqref{riccati}. I think it would be interesting to understand $\phi=\sum_{n=2}^\infty \phi_n x^n\in x^2 \mathbb C[[x]]$ further along the other curves \eqref{linek} where $S_\infty =0$. Is it convergent? We leave these questions to future work.}

In recent work \cite{new}, the authors generalize \cite[\rsp{Theorem 3}]{bonckaert2008a} to saddle-nodes in $\mathbb C^{n+1}$ with arbitrary Poincar\'e ranks $k\in \mathbb N$. With $n=1$, the equations of \cite{bonckaert2008a} can be reduced to
\begin{align}
 \frac{1}{k}x^{k+1} \frac{dy}{dx} = -y + f(x,y),\quad f(0,0)=f'_y(0,0)=0.\eqlab{orderk}
\end{align}
For this purpose the authors of \cite{new} use a generalization of the Borel-Laplace approach, which enabled a description of $k$-summable normal forms of saddle-nodes in $\mathbb C^{n+1}$. It is well-known that the formal series solutions $y=\sum_{n=1}^\infty \phi_n x^n$ of \eqref{orderk} are Gevrey-$\frac{1}{k}$:
\begin{align*}
 \vert \phi_n\vert \le A T^{n-1} (n-1)!^{\frac{1}{k}}\quad \forall\,n\ge 1,
\end{align*}
see e.g. \cite{bonckaert2008a}.  I am confident that the approach of the present paper can be combined with \cite{new} to provide a more detailed description of the growth of the coefficients $\phi_n$ (in line with \thmref{main} for $k=1$).

\newpage
\bibliography{refs}

\begin{thebibliography}{10}

\bibitem{baldom2013a}
I.~Baldom\'a, O.~Castej\'on, and T.~M. Seara.
\newblock Exponentially small heteroclinic breakdown in the generic hopf-zero
  singularity.
\newblock {\em Journal of Dynamics and Differential Equations}, 25(2):335--392,
  2013.

\bibitem{baldoma2012a}
I.~Baldoma and P.~Martin.
\newblock The inner equation for generalized standard maps.
\newblock {\em {SIAM Journal on Applied Dynamical Systems}}, 11(3):1062--1097,
  2012.

\bibitem{balser1994a}
W.~Balser.
\newblock {\em From Divergent Power Series to Analytic Functions: Theory and
  Application of Multisummable Power Series}.
\newblock Springer, 1994.

\bibitem{bonckaert2008a}
P.~Bonckaert and P.~De~Maesschalck.
\newblock Gevrey normal forms of vector fields with one zero eigenvalue.
\newblock {\em Journal of Mathematical Analysis and Applications},
  344(1):301--321, 2008.

\bibitem{braaksma1992a}
B.~L.~J. Braaksma.
\newblock Multisummability of formal power-series solutions of nonlinear
  meromorphic differential-equations.
\newblock {\em Annales De L Institut Fourier}, 42(3):517--540, 1992.

\bibitem{chapman1998a}
S.~J. Chapman, J.~R. King, and K.~L. Adams.
\newblock Exponential asymptotics and stokes lines in nonlinear ordinary
  differential equations.
\newblock {\em Proceedings of the Royal Society A: Mathematical, Physical and
  Engineering Sciences}, 454(1978):2733--2755, 1998.

\bibitem{chapman2009a}
S.~J. Chapman and G.~Kozyreff.
\newblock Exponential asymptotics of localised patterns and snaking bifurcation
  diagrams.
\newblock {\em Physica D: Nonlinear Phenomena}, 238(3):319--354, 2009.

\bibitem{crew2024a}
S.~Crew and P.~H. Trinh.
\newblock Resurgent aspects of applied exponential asymptotics.
\newblock {\em Studies in Applied Mathematics}, 152(3):974--1025, 2024.

\bibitem{de2020a}
P.~De~Maesschalck and K.~Kenens.
\newblock Gevrey asymptotic properties of slow manifolds.
\newblock {\em Nonlinearity}, 33(1):341--387, 2020.

\bibitem{new}
P.~De~Maesschalck and K.~U. Kristiansen.
\newblock {On $k$-summable normal forms of vector fields with one zero
  eigenvalue}.
\newblock {\em \href{https://arxiv.org/abs/2410.20854}{arXiv:2410.20854}},
  2024.

\bibitem{dingle1973a}
R.~B. Dingle.
\newblock {\em Asymptotic expansions : their derivation and interpretation}.
\newblock Acad. Press, 1973.

\bibitem{dumortier2006a}
F.~Dumortier, J.~Llibre, and J.~C. Artes.
\newblock {\em Qualitative theory of planar differential systems}.
\newblock Springer Berlin Heidelberg, 2006.

\bibitem{ecalle}
J.~Ecalle.
\newblock {\em Introduction aux fonctions analysables et preuve constructive de
  la conjecture de Dulac}.
\newblock Actual. Math., Hermann, Paris, 1992.

\bibitem{gelfreich2001a}
V.~Gelfreich and D.~Sauzin.
\newblock Borel summation and splitting of separatrices for the hénon map.
\newblock {\em Annales De L'institut Fourier}, 51(2):513--567, 2001.

\bibitem{uldall2024a}
K.~U. Kristiansen and P.~Szmolyan.
\newblock {A dynamical systems approach to WKB-methods: The simple turning
  point}.
\newblock {\em Journal of Differential Equations}, 406:202--254, 2024.

\bibitem{euler}
K.~U. Kristiansen and P.~Szmolyan.
\newblock Analytic weak-stable manifolds in unfoldings of saddle-nodes.
\newblock {\em Nonlinearity}, 38(2):025019, 2025.

\bibitem{loray2004a}
F.~Loray.
\newblock Versal deformation of the analytic saddle-node.
\newblock {\em Asterisque}, (297):167--187, 2004.

\bibitem{martinet1983a}
J.~Martinet and J.-P. Ramis.
\newblock Classification analytique des équations différentielles non
  linéaires résonnantes du premier ordre.
\newblock {\em Annales Scientifiques De L'école Normale Supérieure},
  16(4):571--621, 1983.

\bibitem{merle2022b}
F.~Merle, P.~Rapha\"{e}l, I.~Rodnianski, and J.~Szeftel.
\newblock {On blow up for the energy super critical defocusing nonlinear
  Schr{\"o}dinger equations}.
\newblock {\em Inventiones Mathematicae}, 227(1):247--413, 2022.

\bibitem{MR4445442}
F.~Merle, P.~Rapha\"{e}l, I.~Rodnianski, and J.~Szeftel.
\newblock {On the implosion of a compressible fluid {I}: {S}mooth self-similar
  inviscid profiles}.
\newblock {\em Ann. of Math. (2)}, 196(2):567--778, 2022.

\bibitem{merle2022a}
F.~Merle, P.~Rapha\"{e}l, I.~Rodnianski, and J.~Szeftel.
\newblock {On the implosion of a compressible fluid II: Singularity formation}.
\newblock {\em Ann. of Math.}, 196(2):779--889, 2022.

\bibitem{mitschi2016a}
C.~Mitschi and D.~Sauzin.
\newblock {\em Divergent Series, Summability and Resurgence. Monodromy and
  Resurgence: LNM 2153}.
\newblock Springer International Publishing, 2016.

\bibitem{NIST}
F.~Olver, D.~Lozier, R.~Boisvert, and C.~Clark.
\newblock {\em The NIST Handbook of Mathematical Functions}.
\newblock Cambridge University Press, New York, NY, 2010-05-12 00:05:00 2010.

\bibitem{voros1983a}
A.~Voros.
\newblock {The return of the quartic oscillator. The complex WKB method}.
\newblock {\em Annales De L'institut Henri Poincare, Section a (physique
  Theorique)}, 39(3):211--338, 1983.

\bibitem{wasow1965a}
W.~Wasow.
\newblock {\em Asymptotic Expansions for Ordinary Differential Equations}.
\newblock Intercience New York, 1965.

\end{thebibliography}
\bibliographystyle{plain}
\newpage
\appendix 
\section{Basic properties of the gamma function}\applab{gamma}

The gamma function $z\mapsto \Gamma(z)$, defined for $\operatorname{Re}(z)>0$ by
\begin{align}
 \Gamma(z) = \int_0^\infty t^{z-1} e^{-t}dt,\label{eq:Gammadef}
\end{align}
will be important in the following. We therefore collect a few facts (see e.g. \cite[Chapter 5]{NIST} and \cite[\rsp{Lemma 4.3}]{euler}) that will be used throughout the manuscript. 

First, we recall that $\Gamma(n+1)=n!$ for all $n\in \mathbb N_0$, which follows from $\Gamma(1)=1$ and the basic property
\begin{align}
 \Gamma(z+1)=z\Gamma(z) \quad \forall \, \operatorname{Re}(z)>0.\eqlab{Gamma}
\end{align}
The gamma function can be analytically extended to the whole complex plane except zero and the negative integers, i.e. $\Gamma:\mathbb C\backslash (-\mathbb N_0)\rightarrow \mathbb C$. Here $-\mathbb N_0$ are simple poles; specifically
\begin{align}\nonumber
 \lim_{x\rightarrow 0} x\Gamma(x) = \Gamma(1)=1.
\end{align}

In this paper, we will use Stirling's well-known formula:
\begin{align}\eqlab{stirling0}
 \Gamma(x+1) = (1+o(1))\sqrt{2\pi x} \left(\frac{x}{e}\right)^x,
\end{align}
for $x\rightarrow \infty$. The following form
\begin{align}
 \frac{\Gamma(x+b)}{\Gamma(x)} = (1+o(1))  x^b,\eqlab{stirling}
\end{align}
for $b\in \mathbb R$ and $x\rightarrow \infty$,  which can be obtained directly from \eqref{stirling0}, will also be needed. We will also use the Euler integral of the first kind in the form
\begin{align}\eqlab{integral}
 \int_0^{\infty} \frac{s^{y-1}}{(1+s)^{x+y}} ds =  \frac{\Gamma(x)\Gamma(y)}{\Gamma(x+y)}\quad \forall\,x,y>0.
\end{align}
(This is often written in terms of $\int_0^{1}(1-v)^{x-1} v^{y-1} dv$; to obtain \eqref{integral} we use the substitution  $v=s/(s+1)$.)

Next, suppose $b>-2$ and define $$\Gamma_n := \Gamma(n+b)\quad\mbox{for all}\quad n\ge 2.$$ Then the following holds:
 \begin{enumerate}
  \item  There exists a $\overline C=\overline C(b)>0$ such that 
  \begin{align}
   \sum_{k=2}^{n-2}  \Gamma_k \Gamma_{n-k} \le \overline C \Gamma_{n-2} \quad \forall n\,\ge 4.\eqlab{convolutionwkest}
  \end{align}
  \item  Let $\rho>0$. Then there exists a $\overline C=\overline C(b,\rho)>0$ such that 
 \begin{align}
  \sum_{j=2}^{k-2} \rho^{j-k+2} \Gamma_j\le \overline C \Gamma_{k-2}\quad \forall\,k\ge 4.\eqlab{rhowkest}
 \end{align}
  \item  Let $\xi>0$. Then there exists a $\overline C=\overline C(b,\xi)>0$ such that 
\begin{align}
   \sum_{l=2}^{\lfloor \frac{k}{2}\rfloor } \xi^{l-2}  \Gamma_{k-2(l-1)} \le \overline C \Gamma_{k-2} \quad \forall\, k\ge 4.\eqlab{xiwkest}
  \end{align}
 \end{enumerate}
These results follow from Stirling's formula, see also \cite[\rsp{Lemma 4.3}]{euler}.

Finally, the digamma function $\psi$ is defined as the logarithmic derivative of the gamma function:
\begin{align}
 \psi(z): = \frac{\Gamma'(z)}{\Gamma(z)}.\eqlab{digamma}
\end{align}
It will be important to us that $\psi$ is an increasing function of $z>0$:
\begin{align}
 \psi'(z)>0.\label{eq:digammaprop}
\end{align}

\end{document}